\documentclass[a4paper, 11 pt, twoside]{amsart}
\usepackage{srollens-en}[2011/03/03]
\usepackage[left = 3.5cm, right = 3.5cm, headsep = 6mm,
footskip = 10mm, top = 35mm, bottom = 35mm, footnotesep=5mm, headheight =
2cm]{geometry}

\usepackage{enumitem}
\newlist{prooflist}{description}{1}
\setlist[prooflist]{font=\normalfont \itshape, labelindent = \parindent, leftmargin = 0pt}

\tikzset{
curve/.style = { every node/.style = { font = \scriptsize}, thick},
E1/.style = { MidnightBlue, curve},
Ei/.style = { NavyBlue, curve},
branch/.style = { Plum, curve,every node/.style = { font = \scriptsize}},
fibre/.style = {Gray,curve},
F/.style = {Orange, curve , thin},
2F/.style = {White!70!Orange,   curve, very thick},
GG/.style = {PineGreen,curve},
G/.style = {PineGreen,curve,  thin},
2G/.style = {White!70!PineGreen,  curve, very thick},
pfeil/.style = {->, every node/.style = { font = \scriptsize}},
sing/.style = {->, every node/.style = { font = \scriptsize}},
}

\usepackage{comment}

\usetikzlibrary{patterns,decorations.pathreplacing}

\usepackage[unicode,bookmarks, pdftex]{hyperref}
\hypersetup{colorlinks=true,citecolor=NavyBlue,linkcolor=NavyBlue,urlcolor=Orange, pdfpagemode=UseNone, breaklinks=true}

\newcommand{\tsum}{\textstyle\sum}
\DeclareMathOperator{\wSym}{\mathsf {wSym}}

\newcommand{\newnotion}[1]{\textbf{#1}}

\title[Birational models of marked elliptic surfaces]{Explicit birational models of marked elliptic surfaces of relative degree two}

\author{S\"onke Rollenske}
\address{S\"onke Rollenske\\FB 12/Mathematik und Informatik\\
Philipps-Universit\"at Marburg\\
Hans-Meerwein-Str. 6\\
35032 Marburg\\
Germany}
\email{rollenske@mathematik.uni-marburg.de}
 
\author{Anna Ulivi}
\address{Anna Ulivi \\ Dipartimento di Matematica \\ Università degli Studi di Genova \\ via Dodecaneso 35
16146 Genova \\ Italy}
\email{anna.ulivi@edu.unige.it}

\author{Aline Zanardini}
\address{Aline Zanardini \\Institut de Mathématiques \\ École polytechnique fédérale de Lausanne \\ Station 8, 1015 Lausanne \\ Switzerland}
\email{aline.zanardini@epfl.ch}

\begin{document}
\begin{abstract}
We introduce explicit construction methods for two distinct birational models of elliptic surfaces equipped with a bisection --- or, more generally, a relative polarisation of degree two --- and double fibres. For Halphen surfaces of index two and  Enriques surfaces, we illustrate how these give simple descriptions inside a toric variety.
\end{abstract}

\maketitle
\setcounter{tocdepth}{1}
\tableofcontents

\section{Introduction}
One of the central goals in birational geometry is to construct and describe distinguished
birational models for algebraic varieties, which are useful for moduli problems. While every birational class of curves contains a unique smooth projective model, in higher dimensions, one is led to the notion of minimal and canonical models, in which mild singularities may appear. In dimension two, elliptic fibrations play an important role in the classification and study of algebraic surfaces. Consequently, explicitly constructing and describing birational models in a relative setting becomes highly relevant. 

In this paper, we provide two distinct birational models for elliptic surfaces equipped with a line bundle $\kl$ of relative degree two. We define an elliptic surface to be a normal surface $S$ equipped with a proper, flat, surjective morphism $f \colon S \to C$ onto a smooth curve $C$, whose generic fibre is a smooth curve of genus one. Furthermore, we assume that $f$ is relatively minimal; see Section \ref{sect: preparations} for details.

When $f$ admits a section, the classical Weierstrass model provides a well-studied birational model of $S$, embedding the surface as a relative cubic hypersurface in a $\IP^2$-bundle over the base curve $C$, and it is a cornerstone in the study of moduli problems for elliptic surfaces \cite{MirandaModuli, inchiostro2020, ModuliKodairaOne}. However, without assuming that a section exists, much less is known — particularly when multiple fibres are present —and explicitly describing convenient birational models with a view towards moduli problems is considerably more challenging. The two models introduced in this paper aim to bridge this gap in the degree-two case, advancing the study of elliptic surfaces with higher-degree relative polarisations.

The birational double cover model, which we describe in Section \ref{sect: birational double cover model}, had already been considered independently in several special cases \cite{horikawa, do-rollenske22, ST, MZ, inchiostro2026}. It starts from the simple observation that the datum of  $\kl$ realises the generic fibre as a double cover of $\IP^1$ (cf., Lemma \ref{lem: linear systems on Kodaira fibres}) and hence $S$, birationally, as a double cover over a ruled surface. We make this construction explicit and also work out the building data with a focus on applications. Analogues for relative polarisations of higher degree will be explored in a future project. 

The second model, which we call the relative determinantal model, relies on the study of the relative section ring of $\kl$ in the spirit of the Weierstrass model. In the presence of double fibres, we get a natural embedding into a weighted projective bundle that is locally expressed by determinantal equations (Theorem \ref{thm:embed_double}). To ensure that the equations can be globalised, as in the traditional Weierstrass model, we require an additional splitting condition. If, in addition, the base is $\IP^1$, this construction gives a model inside a toric $5$-fold. This is the content of Section \ref{sect:determinantal}. 

We also highlight the scope and limitations of our constructions through a series of examples in Section \ref{sect:examples}, including a complete characterisation of the determinantal model in the case of Halphen surfaces. In Section \ref{sect: Enriques} we discuss the models for special and non-special  Enriques surfaces and use them to reprove Horikawa's Theorem that all Enriques surfaces are deformation equivalent.

\subsection*{Acknowledgements}
S.R. and A.Z. are grateful to the organisers of the workshop ``Explicit Moduli Problems in Higher Dimensions" at Banff, where their collaboration started. The relative determinantal models were studied in A.U.'s Master's thesis, supervised by the first author and supported through the Erasmus program. We are indebted to Stephen Coughlan for the construction of the deformation in Section \ref{sect: Stephens family} and would like to thank Diana Torres and  Roberto Pignatelli for enriching discussions. 
S.R. was supported by the DFG.

\section{Generalities on elliptic surfaces, double covers of smooth surfaces and weighted projective bundles}\label{sect: preparations}

This section establishes the background material used throughout the paper. We work over the field of complex numbers $\mathbb{C}$.

\subsection{Elliptic surfaces}
Our starting point will always be a smooth elliptic surface, but since some of the models we produce are singular, we phrase the definition slightly more generally.

\begin{defin}\label{def:elliptic_fibration}
    Let $S$ be a normal surface. An \newnotion{elliptic fibration} on $S$ is a proper, flat, surjective morphism $f \colon S \to C$ onto a smooth curve such that the generic fibre $S_{\eta}$ is a smooth curve of genus one. The fibration is said to be \newnotion{relatively minimal} if $K_S$ is $f$-nef.
    
      An \newnotion{elliptic surface} is a normal surface $S$ equipped with an elliptic fibration $f \colon S \to C$.
\end{defin}

\begin{rem}
Note that we do not require a global section for $f$ in Definition \ref{def:elliptic_fibration}. This allows for multiple fibres. 
\end{rem}

Given an elliptic fibration $f \colon S \to C$ on a smooth surface $S$, repeatedly contracting the $(-1)$-curves contained in its fibres yields a relatively minimal fibration. This relatively minimal model is unique \cite[Propositions~III.8.4 and~III.8.5]{BHPV}. 
Unless stated otherwise, we will thus always assume that an elliptic fibration on a smooth surface is relatively minimal.

The fibration  $f$ has only finitely many singular fibres. The singular fibres were classified by Kodaira and N\'{e}ron \cite{kodaira1, kodaira2, neron}; their full classification is summarised in Table~\ref{tab:fibres}. These fibres are unions of potentially non-reduced rational curves whose dual graphs are extended Dynkin diagrams of ADE type. Furthermore, except for types $I_0$, $I_1$, and $II$, every irreducible component of a singular fibre is a $(-2)$-curve.

\begin{table}
\caption{Kodaira's classification of singular fibres}
\label{tab:fibres}
\begin{tabular}{lcc}
\toprule 
Kodaira type & number of components & dual graph\\
\midrule 
$I_0$ & 1 (smooth) & \begin{tikzpicture}[line cap=round,line join=round,>=triangle 45,x=1.0cm,y=1.0cm]
\clip(-.2,-.2) rectangle (.2,.2);
\begin{scriptsize}
\draw [fill=black] (0.,0.) circle (2.5pt);
\end{scriptsize}
\end{tikzpicture}\\
$I_1$ & 1 (with a node) & \begin{tikzpicture}[line cap=round,line join=round,>=triangle 45,x=1.0cm,y=1.0cm]
\clip(-.2,-.2) rectangle (.2,.2);
\begin{scriptsize}
\draw [fill=black] (0.,0.) circle (2.5pt);
\end{scriptsize}
\end{tikzpicture}\\
$I_k$ & $k\geq 2$ (in a cycle) & $\tilde{A}_{k-1}$\\
$II$ & 1 (with a cusp) & \begin{tikzpicture}[line cap=round,line join=round,>=triangle 45,x=1.0cm,y=1.0cm]
\clip(-.2,-.2) rectangle (.2,.2);
\begin{scriptsize}
\draw [fill=black] (0.,0.) circle (2.5pt);
\end{scriptsize}
\end{tikzpicture}\\
$III$ & 2 (tangent) & $\tilde{A}_1$ \\
$IV$ & 3 (meeting in one point) & $\tilde{A}_2$ \\
$I_k^*$ & $k+5$ & $\tilde{D}_{4+k}$\\
$IV^*$ & $7$ & $\tilde{E}_6$\\
$III^*$ & $8$ & $\tilde{E}_7$\\
$II^*$ & $9$ & $\tilde{E}_8$\\
\bottomrule
\end{tabular}
\end{table}

Whenever $f$ has a multiple fibre $mF$, the reduced fibre $F$ is of type $I_k$ for some $k \geq 0$ (see, e.g., \cite[Proposition~5.1.8]{dc} or \cite[Section~V.7]{BHPV}), and the sheaves $\ko_S(F)$ and $\ko_F(F)$ are non-trivial torsion line bundles of order $m$ (see, e.g., \cite[Lemma~III.8.3]{BHPV}). When $m=2$, we have the following result.

\begin{lem}\label{lem:torsion}
    Let $E = 2F$ be a double fibre of an elliptic fibration $f\colon  S \to C$, with $S$ a smooth projective surface. Then there is a short exact sequence
\[ 
0 \to \ko_F(-F) \to \ko_E \to \ko_F\to 0,
\]
and  we have a decomposition $\ko_{E} \isom  \ko_F \oplus  \ko_F(-F)$ as a sheaf on $F$.
\end{lem}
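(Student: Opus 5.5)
The short exact sequence comes from the standard ideal-sheaf sequence for the effective divisors $F \subset E = 2F$. Since $F \le E$, we have $\ko_S(-E) \subset \ko_S(-F)$, and quotienting gives
\[
0 \to \ko_S(-F)/\ko_S(-2F) \to \ko_S/\ko_S(-2F) \to \ko_S/\ko_S(-F) \to 0,
\]
that is, $0 \to \ko_F(-F) \to \ko_E \to \ko_F \to 0$. Here $\ko_S(-F)/\ko_S(-2F) \isom \ko_S(-F)\otimes \ko_F = \ko_F(-F)$. All three sheaves are supported on $F$. The first and last are naturally $\ko_F$-modules. The middle one is an $\ko_E$-module, which we regard as an $\ko_F$-module once we fix a ring splitting.

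For the decomposition, the plan is to show that this extension of $\ko_F$-modules (equivalently, of $\ko_E$-modules) splits. Since $\ko_F(-F)$ is a line bundle on $F$, the extension class lies in $\mathrm{Ext}^1_{\ko_E}(\ko_F,\ko_F(-F))$. It is cleaner to split the surjection $\ko_E \to \ko_F$ directly, by a ring map $\ko_F \to \ko_E$. Globally, a sheaf splitting is equivalent to a section $s\colon \ko_F \to \ko_E$ of the projection, and any $\ko_E$-linear section is determined by where it sends $1$. The key object is therefore the exact sequence of global sections
\[
0 \to H^0(\ko_F(-F)) \to H^0(\ko_E) \to H^0(\ko_F) \to H^1(\ko_F(-F)).
\]
Since $\ko_F(-F)$ is a nontrivial torsion line bundle of degree $0$ on the genus one curve $F$ of type $I_k$, it has $h^0 = h^1 = 0$; this is the fact recalled just before the lemma. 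So $H^0(\ko_E) \isom H^0(\ko_F) = \IC$. The constants $\IC \subset H^0(\ko_E)$, however, only give a $\IC$-splitting, not an $\ko_F$-module splitting.

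The more robust route is to use the multiplicative structure. As $\ko_F(-F)$ is the square-zero ideal $I = \ko_S(-F)/\ko_S(-2F)$ of $\ko_E$, the ring $\ko_E$ is a square-zero extension of $\ko_F$ by $I$. Such extensions of a reduced scheme $F$ by a line bundle are classified by $\mathrm{Ext}^1_{\ko_F}(\Omega_F, I)$ (or its cotangent-complex version $T^1$ when $F$ is singular). Here $E$ is a genuine first-order thickening of $F$ inside the smooth surface $S$, and its class is the image of the conormal sequence class. However, we only need the decomposition as a sheaf of $\ko_F$-modules, or even just as a sheaf of abelian groups on $F$, and that is weaker. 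The concrete approach is as follows. Locally, take a defining equation $t$ of $F$, so that $\ko_E = \ko_S/(t^2)$. The additive splitting reduces to the vanishing of the class in $\mathrm{Ext}^1_{\ko_F}(\ko_F, \ko_F(-F)) = H^1(F,\ko_F(-F))$, and this group is zero as noted. To make this precise, treat $\ko_E$ as a sheaf of $f^{-1}$- or $\ko_F$-modules via the natural map from $\ko_C$-multiplication restricted to the fibre. Alternatively, simply apply $\mathrm{Hom}(\ko_F,-)$ in the category of $\ko_E$-modules and use $\mathrm{Ext}^1_{\ko_E}(\ko_F,\ko_F(-F))$, computed through the local resolution $\cdots \to \ko_E \xrightarrow{t} \ko_E \xrightarrow{t} \ko_E \to \ko_F$. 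Its local Ext sheaves are $\ko_F(-F)$ twisted appropriately in each degree, and the local-to-global spectral sequence reduces the obstruction to $H^1(\ko_F(-F)) = 0$ together with a local term.

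The main obstacle is to pin down the module structure in which the splitting is meant, and to check that the local contribution $\mathcal{E}xt^1$ does not obstruct the splitting. I expect to handle this by noting that the local extension $\ko_S/(t^2)$ over $\ko_S/(t)$ does split locally as abelian groups via $a + bt$. The local splittings then differ by maps $\ko_F \to \ko_F(-F)$, that is, by a Čech $1$-cocycle with values in $\ko_F(-F)$. This cocycle is a coboundary because $H^1(F,\ko_F(-F)) = 0$, and gluing the corrected local splittings gives the global decomposition $\ko_E \isom \ko_F \oplus \ko_F(-F)$.
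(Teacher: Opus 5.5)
Your derivation of the exact sequence is fine. Your endpoint (the splitting obstruction is a class in $H^1(F,\ko_F(-F))$, which vanishes) is exactly the paper's one-line proof, which just asserts $\Ext^1(\ko_F,\ko_F(-F))=H^1(F,\ko_F(-F))=0$. The gap is the step you yourself call the main obstacle: neither mechanism you propose actually produces a class in $H^1(F,\ko_F(-F))$.

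\emph{The $\ko_E$-module route fails.} As $\ko_E$-modules the sequence is not even locally split. An $\ko_E$-linear section sends $1$ to a lift $e$ of $1$ with $te=0$. But $e$ is a local unit, so this forces $t\in(t^2)$. Equivalently, $\mathcal{E}xt^1_{\ko_E}(\ko_F,\ko_F(-F))\isom\mathcal{H}om(\ko_F(-F),\ko_F(-F))=\ko_F$, and the extension class maps to the section $1$. So the local term you hoped to dismiss is exactly where the class is nonzero. Also, $\ko_C$-multiplication only gives a $\IC$-structure on $\ko_E$, not an $\ko_F$-structure.

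\emph{The \v{C}ech route fails as stated.} Writing elements as $a+bt$ presupposes a choice of lift of $a$. If your local splittings are merely additive, two of them differ by an additive map $\ko_F\to\ko_F(-F)$. That is a section of $\mathcal{H}om_{\IC}(\ko_F,\ko_F(-F))$, not of $\ko_F(-F)$, and $H^1(F,\ko_F(-F))=0$ says nothing about such cocycles.

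The route you set aside, via ring structures, is the one that works, at least when $F$ is smooth. Use local \emph{ring} retractions $\ko_F\to\ko_E$. They exist on affine opens of the smooth locus, by formal smoothness or by choosing coordinates $(u,t)$ with $E=\{t^2=0\}$. Two retractions differ by a $\IC$-derivation $\ko_F\to\ko_F(-F)$, so the cocycle lives in $T_F\tensor\ko_F(-F)$. This is $\isom\ko_F(-F)$ precisely because $T_F\isom\ko_F$ on a smooth genus one curve. Then $H^1(F,\ko_F(-F))=0$ gives a global retraction. It makes $\ko_E$ an $\ko_F$-algebra and yields $\ko_E\isom\ko_F\oplus\ko_F(-F)$ at once; this is the structure the paper's appeal to $\Ext^1$ tacitly needs.

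For $F$ of type $I_k$ with $k\geq 1$, no retraction exists near a node. Indeed, $\operatorname{Spec}(\ko_F\oplus\ko_F(-F))$ has embedding dimension $3$ there, whereas $E\subset S$ has embedding dimension at most $2$. So in that case the decomposition can only be meant in a weaker sense. What the paper actually uses later is the exactness of $0\to H^0(F,\kl^{\tensor n}|_F\tensor\ko_F(-F))\to H^0(2F,\kl^{\tensor n}|_{2F})\to H^0(F,\kl^{\tensor n}|_F)\to 0$ for $n\geq 1$. That follows from $H^1(F,\kl^{\tensor n}|_F\tensor\ko_F(-F))=0$ without any splitting.
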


\begin{proof}
Since the bundle $\ko_F(-F)$ is a non-trivial $2$-torsion bundle on $F$, we have that $\Ext^1(\ko_F, \ko_F(-F)) = H^1(F, \ko_F(-F)) = 0$.
\end{proof}

\subsection{Double covers of smooth surfaces}\label{sect:double covers} 

For the reader's convenience, we recall some basic facts about (branched) double covers of smooth surfaces that will be used in Section~\ref{sect: birational double cover model}. A standard reference is \cite[Ch.~I.17]{BHPV}, or more generally \cite{Pardini91} for abelian covers.

A double cover of a smooth surface $W$ is a finite morphism $\pi \colon Y \to W$ of degree two. In such a situation, the trace map splits the pushforward of the structure sheaf as $\pi_* \ko_Y \isom \ko_W \oplus \km^{-1}$, where the algebra structure is determined by the multiplication map $\km^{-1} \tensor \km^{-1} \to \ko_W$. Its dual defines an isomorphism of line bundles $\km^{\otimes 2} \isom \ko_W(B)$, where $B \subset W$ is the branch divisor of $\pi$. Conversely, given a line bundle $\km$ on $W$ and a section $s_B \in H^0(W, \km^{\otimes 2})$, this  uniquely determines a double cover $\pi \colon Y \to W$. Letting $t \in H^0(|\km|, \pi^*\km)$ be the tautological section on the total space of the geometric line bundle $|\km| \to W$, the surface $Y$ is defined by the equation $t^2 = s_B$. By a standard abuse of terminology, we refer to $Y$ as the double cover of $W$ branched along $B$. By adjunction, its canonical bundle is $\omega_Y=\pi^*(\omega_W\otimes \km)$.

Since the base surface $W$ is smooth, the covering surface $Y$ is singular only over the singular points of the branch divisor $B$. Furthermore, the local analytic type of these singularities is uniquely determined by the local structure of $B$. In fact, assuming $Y$ is normal — which occurs if and only if $B$ is reduced — a resolution of $Y$ can be constructed via an embedded log resolution of the singularities of $B$, as we now explain.

\begin{construction}\label{construction: equivariant resolution}
If $B$ is reduced, then we can 
construct an embedded log resolution $\mu \colon \tilde W \to W$ of $B$ with the 
following properties:
\begin{enumerate}
 \item the strict transform $\inverse \mu _* B$ is smooth;
 \item decomposing the pullback $\mu^*B$ as $2\tilde A + \tilde B$ with $\tilde B$ reduced, the  divisor 
$\tilde B\geq\inverse \mu _* B$ is still smooth.
\end{enumerate}
In particular, we obtain the diagram below, where $\tilde{Y}$ is the double cover of $\tilde{W}$ branched over $\tilde{B}$. This yields an equivariant resolution of the singularities of $Y$ that is not necessarily minimal; however, a minimal resolution can be obtained by repeatedly contracting $(-1)$-curves.

\[
 \begin{tikzcd}\tilde Y \rar{\text{normal.}} 
\arrow{dr}[swap]{\text{branched over $\tilde B$}} & \mu^* Y 
\dar{\mu^*\pi}\rar & Y \dar{\pi}\\
& \tilde W \rar{\mu} &W
\end{tikzcd}
\]
\end{construction}
Here, by equivariant, we mean that the resolution is compatible with the lifted deck involution. In fact, it can also be obtained by repeated blow-ups along equivariant centres.

\begin{exam}\label{ex: singularity for double fibre}
We now explicitly apply Construction~\ref{construction: equivariant resolution} to the case where the branch divisor $B$ has an isolated triple point $p$ satisfying the following conditions:
\begin{enumerate}
\item $B = \Gamma + B_0$ in a neighborhood of $p$;
\item $\Gamma$ is smooth at $p$;
\item $B_0$ has an $A_{n+3}$ singularity at $p$ with $n \geq 0$; and
\item the intersection multiplicity is $(B_0 \cdot \Gamma)_p = 4$.
\end{enumerate}
In Arnold's notation \cite{Arnold2012}, $B$ has a singularity of type $J_{2,n}$ at $p$.

We determine the resulting singularity types of $Y$ over $p$ and outline the construction of their minimal resolutions. First, observe that by assumption, a single blow-up at $p$ yields a triple point $p'$ on the strict transform of $B$. Blowing up a second time at $p'$ separates the strict transforms of $\Gamma$ and $B_0$. Let $E_1$ and $E_2$ denote the exceptional curves of the first and second blow-ups, respectively. Following these two blow-ups, we denote the strict transforms of the components of $B$ by $\tilde{\Gamma}$ and $\tilde{B}_0$. The branch locus of the normalisation $\tilde{Y}$ of the pullback cover can then be identified as $\tilde{B} = E_1 + \tilde{\Gamma} + \tilde{B}_0$. Since $E_1^2 = -2$ on the base surface, it lifts to a $(-1)$-curve on the cover $\tilde{Y}$. Contracting this $(-1)$-curve yields a new surface, $\bar{Y}$. To determine the singularity type over $p$ and its minimal resolution, there are three cases to consider.

\begin{prooflist}
 \item[$n=0$] In this case, $\tilde B $ intersects $E_2$ in four distinct 
points, so in the surface $\bar Y$, $E_2$ becomes a smooth elliptic curve $F$ of 
self-intersection $(-1)$ and $\bar Y \to Y$ is the minimal 
resolution of a minimally elliptic singularity named $T_{2,3,6}$ or $J_{2,0}$ in Arnold's classification.
 \item[$n=1$] In this case, $\tilde{B}$ is still smooth but no longer intersects $E_2$ transversely; instead, the two curves are tangent at a single point. Consequently, the exceptional divisor $E_2$ lifts to a nodal rational curve on $\bar{Y}$, meaning that $Y$ has a cusp singularity of type $T_{2,3,7}$.
 
 \item[$n\geq2$]  In this case, the strict transform of $B_0$ still has an $A_{n-1}$ singularity at its intersection point with $E_2$. By either resolving this $A_{n-1}$ surface singularity on $\bar{Y}$ or proceeding as in Construction \ref{construction: equivariant resolution}, we find that the minimal resolution of $\bar{Y}$ is the minimal resolution of $Y$. The exceptional divisor consists of a cycle of $n$ rational curves, and all components have self-intersection $-2$, except for the strict transform of $E_2$, which has self-intersection $-3$. Consequently, the entire cycle has a total self-intersection of $-1$, and its contraction down to $Y$ yields a cusp singularity of type $T_{2,3,6+n}$.
\end{prooflist}
\end{exam}

\subsection{Weighted projective bundles}\label{sect:wpb}

We now explain how to construct weighted projective bundles over a smooth base $Z$ by taking a relative $\Proj$ of a sheaf of graded $\ko_Z$-algebras, following \cite[Section 4]{Mullet} and \cite[Section 3]{cp22}.
Subsequently, in Sections \ref{sect:determinantal}, \ref{sect:examples}, and \ref{sect: Enriques}, we consider the case where $Z=C$ is the base curve of an elliptic fibration.

If $\kf$ is a locally free sheaf on $Z$ and $w$ is a positive integer, we can consider the weighted symmetric algebra defined as $\Sym^{(w)}(\kf)=\bigoplus_{d\geq 0} \Sym^{(w)}(\kf)_d$, where 
\[
\Sym^{(w)}(\kf)_d = \begin{cases}
    \Sym(\kf)_k & \text{if}\, d=kw \\
    0 & \text{otherwise}
\end{cases},
\]
and $\Sym(\kf)_k$ denotes the $k$-th graded  piece of the usual symmetric algebra. Equivalently, if $\kf$ has rank $r$, we set $
\Sym^{(w)}(\kf)_0 \simeq \ko_Z$ and we cover $Z$ by opens such that the restriction of $\Sym^{(w)}(\kf)$ to each open is simply a polynomial ring, on some variables $s_1,\ldots,s_r$, graded by $\deg s_i=w$.  

With the above notation in mind, we can now introduce the following definition.

\begin{defin}
    Given positive integers $w_1\leq w_2 \leq \ldots \leq w_n$ and locally free sheaves $\kf_1,\ldots,\kf_n$ on $Z$, the \newnotion{associated free weighted symmetric algebra} is the sheaf of graded $\ko_Z$-algebras
    \[
    \ks  = \wSym_{(w_1,\ldots,w_n)}(\kf_1,\ldots,\kf_n) \coloneqq \Sym^{(w_1)}(\kf_1) \otimes_{\ko_Z}\ldots \otimes_{\ko_Z} \Sym^{(w_n)}(\kf_n)
    \]
    with weights $(w_1, \dots, w_n)$. 
   
   The corresponding \newnotion{weighted projective bundle} is
    \[ \pi \colon \mathbb{P}_{(w_1,\ldots,w_n)}(\kf_1,\ldots,\kf_n)\coloneq \Proj_Z(\ks) \to Z.\]
\end{defin}
Denoting the weighted projective bundle by $\pi \colon \IP\to Z$ momentarily, the fibres of $\pi$ are isomorphic to the weighted projective space $\IP_z \isom \mathbb{P}(w_1^{\rk \kf_1},\ldots,w_n^{\rk \kf_n})$. 
 Moreover, there exist sheaves $\ko_{\mathbb{P}}(d)$ for all $d \in \mathbb{Z}$ whose restriction to each fibre of $\pi$ coincides with $\ko_{\mathbb{P}_z}(d)$.

If each $\kf_i$  splits as a direct sum of line bundles,  $\kf_i = \bigoplus_{j=1}^{\rk \kf_i} \kl_{ij}$, then there are canonical sections 
\[
x_{ij}\in H^0\left( Z, \mathcal{L}_{ij}^{-1} \otimes \pi_* \ko_{\mathbb{P}}(w_i) \right),
\]
which we call the generator, global coordinate or just variable associated to $\kl_{ij}$. We can then write $\ks=\bigotimes_{ij} \bigoplus_k \kl_{ij}^k x_{ij}^k$ and indeed the $x_{ij}$ induce (weighted) homogeneous coordinates on each fibre. 
 If in addition $Z$ is a projective space, then $\IP$ is the toric variety in \cite[Construction 3.2]{Mullet}.

Let us illustrate this in an example that will reappear in  Remark \ref{rem: numbers for deg 2}.
\begin{exam}\label{example: toric threefold}
	Let $Z=\IP^1$ and consider, for some integers $d_1$, $d_2$, $d_3$, the line bundles $\kf_1=\ko_{\IP^1}(-d_1)$, $\kf_2=\ko_{\IP^1}(-d_2)$ and $\kf_3=\ko_{\IP^1}(-d_3)$. Then the 
	weighted projective bundle $\IP_{(w_1, w_2, w_3)}(\kf_1,\kf_2,\kf_3)$ over $\IP^1$, is the toric variety $\IC^5//(\IC^{\times})^2$ with weight matrix
    \[
\mat{t_0 & t_1 & x_0 & x_1 & y \\
1 & 1 & d_1 & d_2 & d_3 \\
0 & 0 & w_1 & w_2 & w_3}
    \]
    and irrelevant ideal $I=(t_0,t_1)\cap (x_0,x_1,y)$. That is, the quotient of $\mathbb{C}^5\backslash V(I)$ by the action of $(\IC^{\times})^2$ given by 
\[
(\lambda,\mu)\cdot (t_0,t_1,x_0,x_1,y)=(\lambda t_0,\lambda t_1, \lambda^{d_1}\mu^{w_1} x_0, \lambda^{d_2}\mu^{w_2}x_1, \lambda^{d_3}\mu^{w_3}y ).
\]

    Note that in this case, writing $\IP^1 = \Proj \IC[t_0,t_1]$ and letting $U_i= \spec \IC[t_0/t_i,t_1/t_i]$, for $i=0,1$, we have that 
    \[
    \ks|_{U_i}\simeq \IC\left[\frac{t_0}{t_i},\frac{t_1}{t_i}\right][t_i^{-d_1}x_0,t_i^{-d_2}x_1,t_i^{-d_3}y]\simeq \ko_{U_i}[s_1,s_2,s_3] 
    \]
    with $\deg s_i=w_i$ and hence $\Proj_Z(S)|_{U_i}\simeq \IA^1 \times \IP(w_1,w_2,w_3)$. Moreover, we have $x_0$ as a generator of $\kf_1$, $x_1$ for $\kf_2$, and $y$ for $\kf_3$; these are precisely the aforementioned global coordinates on $\Proj_Z(\ks)$ in this case.
\end{exam}

\section{Marked elliptic surfaces and the ample model}\label{sect:marked_and_ample}

As noted in the introduction, this paper aims to provide explicit birational models for elliptic surfaces equipped with a line bundle $\kl$ of relative degree two. To formalise this objective, we define the following objects.

\begin{defin}\label{def:markedpair} 
    A \newnotion{marked elliptic surface of relative degree two}, or simply a marked elliptic surface, is a pair $(f\colon S\to C, \kl)$ consisting of:
    \begin{enumerate}
        \item a relatively minimal elliptic fibration $f\colon S \to C$, and
        \item a choice of $f$-nef line bundle $\kl$ on $S$, whose restriction $\kl_c$ to any fibre $S_c$ of $f$ has degree two.     
    \end{enumerate} 
\end{defin}

Given a marked elliptic surface $(f\colon S\to C, \kl)$, we will refer to $\kl$ as either a marking or a relative polarisation. We denote the reductions of the double fibres of $f$, if any, by $F_1, \ldots, F_k$. The corresponding points on $C$ over which these double fibres lie are denoted by $c_i$.

In this setup, a natural birational model to consider is the relative $\Proj$ of the relative section ring $\kr(f,\kl)$ associated with the marking $\kl$, which is a sheaf of graded $\ko_C$-algebras. To simplify terminology and notation, we introduce the following definition.

\begin{defin}\label{def:ample_model}
The \newnotion{ample model} of a smooth marked elliptic surface of relative degree two $(f\colon S\to C,\kl)$  is the pair $(\tilde f\colon X\to C,\tilde \kl)$ defined by 
\[
X \coloneqq \Proj_C\kr(f,\kl)=\Proj_C\left(\bigoplus_{n\geq 0} f_*\kl^{\tensor n} \right)
\]
and the relation $\kl = \phi^*\tilde \kl$ for the natural map $\phi \colon S \to X$. 
\end{defin}

In the subsequent sections, we will geometrically describe the ample model $X$ of a marked elliptic surface $(f\colon S \to C, \kl)$ from two different points of view. A few remarks concerning such a model are thus in order.

\begin{rem} 
First, it is routine to check, for example, using Reider's theorem \cite[Chapter IV, Theorem 11.4]{BHPV}, that a multiple of $\kl$ tensored with a sufficiently positive line bundle from the base curve is globally generated; in particular, $\phi \colon S \to X$ is a morphism.
We also observe that a curve contained in a fibre of $f$ is contracted by $\phi$ if and only if the restriction of $\kl$ to this curve has degree zero. Since $f \colon S \to C$ is relatively minimal, by the classification of singular fibres (Table \ref{tab:fibres}),  any strict subconfiguration of a reducible fibre contracts to an ADE singularity, and thus $X$ has canonical singularities. 
 \end{rem}

\begin{rem}\label{rem: bisection}
Second, since we are working relative to the base curve $C$, we are often willing to replace $\kl$ with $\kl\tensor f^*\kl'$ for a suitable line bundle $\kl' \in \Pic(C)$. By doing so, we can always assume that $\kl = \ko_S(D)$ for an effective divisor $D$. Sometimes it is convenient to consider $\kl=\omega_{S/C}(D)$ instead. Both relative polarisations $\ko_S(D)$ and $\omega_{S/C}(D)$ yield the same ample model (in a slightly different embedding) since, by the canonical bundle formula \eqref{eq: can bundle formula}, $\omega_{S/C}^{\tensor 2}$ is the pullback of a bundle on $C$. For a concrete example, see Section~\ref{sec:Halphen}.
\end{rem}

\begin{rem}\label{rem:rel_lc}
    Third, we note that if $\kl = \ko_S(D)$ is defined by an effective divisor and the pair $(S, D)$ is log canonical, then the ample model is isomorphic to the so-called relative canonical model, see e.g., \cite[Section 3.8]{komo}.
\end{rem}

For later reference, we include next the formulas computing the main invariants of $X$ in terms of the so-called fundamental line bundle $\tilde f_*\omega_{X/C} = R^1\tilde f_* \ko_X$. These formulas are usually stated for the smooth surface $S$, but since $X$ has canonical singularities, they are equally valid for $X$. On the ample model, we denote the double fibres by $2G_1, \ldots, 2G_k$, that is, for each $i$, we set $G_i\coloneqq \tilde{f}^{-1}(c_i)=\varphi(F_i)$.

\begin{prop}\label{prop: numerical invariants}
	With notations as in Definition \ref{def:ample_model}, we have that:
	\begin{align}
		\omega_{X/C} & = 
		\tilde f^*\tilde f_*\omega_{X/C} \left(\tsum_{i=1}^k G_i\right);
		 \label{eq: can bundle formula}
		\\
		p_g(X)& = \begin{cases}
			g(C) &   \text{if $\tilde f_*\omega_{X/C}$ is trivial, or} \\
			\deg \tilde f_*\omega_{X/C}+g(C) -1 & \text{otherwise}
		\end{cases} \,\,\text{;}\\
		q(X)& = \begin{cases}
			g(C)+1 &    \text{if $\tilde f_*\omega_{X/C}$ is trivial, or} \\
			g(C) & \text{otherwise}
		\end{cases}.
	\end{align}
	
\end{prop}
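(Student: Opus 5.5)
The plan is to establish the three formulas on the smooth surface $S$ and then transfer them to $X$. For the transfer I will use that $\phi\colon S\to X$ is a crepant contraction of $(-2)$-configurations to ADE (canonical) singularities, so $\omega_S=\phi^*\omega_X$. Since $X$ has rational singularities, $R\phi_*\ko_S=\ko_X$. It follows that $R^1\tilde f_*\ko_X=R^1 f_*\ko_S$, and that $p_g$, $q$ and $\chi(\ko)$ of $X$ and $S$ agree. Moreover $\phi$ is an isomorphism near the generic points of the fibres. Hence the multiple fibres correspond: $F_i$ maps onto $G_i$ and $\phi^*G_i=F_i$. The latter holds because $F_i$ is of type $I_k$ and $\phi$ contracts only curves on which $\kl$ has degree zero, so no component of $F_i$ carries exceptional multiplicity.

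For \eqref{eq: can bundle formula} I will invoke Kodaira's canonical bundle formula for a relatively minimal elliptic fibration on a smooth surface \cite[Theorem~V.12.1]{BHPV}:
\[
\omega_S = f^*\bigl(\omega_C\otimes (R^1f_*\ko_S)^{\vee}\bigr)\Bigl(\tsum (m_i-1)F_i\Bigr).
\]
Only double fibres occur among the $F_i$ here, since $\kl$ has relative degree two and the multiplicity of every multiple fibre therefore divides $2$. Relative duality gives $f_*\omega_{S/C}=(R^1f_*\ko_S)^\vee$. Subtracting $f^*\omega_C$ yields $\omega_{S/C}=f^*f_*\omega_{S/C}(\sum F_i)$. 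Pushing this down via $\phi$ and using $\omega_S=\phi^*\omega_X$ gives the formula on $X$.

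For $q$, I will use the Leray spectral sequence, which gives $0\to H^1(C,\ko_C)\to H^1(S,\ko_S)\to H^0(C,R^1f_*\ko_S)\to H^2(C,\ko_C)=0$. Thus $q=g(C)+h^0(C,\mathcal T^{-1})$, where $\mathcal T^{-1}:=R^1f_*\ko_S=(f_*\omega_{S/C})^\vee$. The key input is $\deg f_*\omega_{S/C}=\chi(\ko_S)\ge 0$, with equality and triviality of $\mathcal T$ occurring exactly in the stated case. I expect this to be the main obstacle: when $\deg\mathcal T=0$ but $\mathcal T$ is nontrivial torsion, $h^0(\mathcal T^{-1})=0$ as well. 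The dichotomy is therefore ``trivial versus not'', and this is consistent with \cite[Proposition~III.11.2 and Lemma~III.11.3]{BHPV}, which I would cite. In the nontrivial case $h^0(\mathcal T^{-1})=0$ because $\deg\mathcal T^{-1}\le 0$ and $\mathcal T^{-1}\neq\ko_C$.

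For $p_g$, I will compute $h^0(\omega_S)=h^0(C, f_*\omega_S)$. Since $F_i$ is a fixed component, $f_*\ko_S(\sum F_i)=\ko_C$, so $f_*\omega_S=\omega_C\otimes\mathcal T$ by the projection formula. If $\mathcal T$ is trivial, $p_g=g(C)$. Otherwise, Riemann--Roch gives $h^0=\deg\mathcal T+g(C)-1+h^1$, and $h^1(\omega_C\otimes\mathcal T)=h^0(\mathcal T^{-1})=0$ as in the previous step. Alternatively, one can deduce $p_g$ from $\chi(\ko_S)=\deg\mathcal T$ together with the value of $q$. Either route finishes the proof once the positivity/triviality dichotomy for $\mathcal T$ is in hand.
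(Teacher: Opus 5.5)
Your proposal is correct and takes the same route as the paper. The paper simply cites the standard formulas for the smooth relatively minimal surface $S$ (Kodaira's canonical bundle formula, $\deg f_*\omega_{S/C}=\chi(\mathcal{O}_S)\ge 0$, and the resulting values of $q$ and $p_g$) and remarks that they transfer to $X$ because $X$ has canonical singularities. You additionally spell out what the paper leaves implicit: that $\phi$ is crepant with $R\phi_*\mathcal{O}_S=\mathcal{O}_X$, that all multiple fibres are double, and that $\phi^*G_i=F_i$, which holds since $F_i\cdot E=0$ for every contracted component $E$ of $F_i$.
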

\begin{proof}
	The formulas are 
	given in \cite[Sect. 7, Lemma 14]{Friedman} or \cite[Chapter V, Sect. 12]{BHPV}
\end{proof}

For each $n \geq 0$, we further let $\ke_n \coloneqq f_* \kl^{\tensor n}$ and record the following standard auxiliary result that follows, for example, from  \cite[Chapter I, Theorem 8.5 (iv)]{BHPV}.

\begin{lem}
    For each $n\geq 0$, the sheaf $\ke_n$ is a vector bundle of rank $2n$. Moreover, the fibre over a point $c\in C$ is naturally isomorphic  to $H^0 (S_c, \kl_c^{\otimes n})$.
\end{lem}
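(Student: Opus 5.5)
The plan is to apply cohomology and base change (Grauert's theorem, as in \cite[Chapter I, Theorem 8.5]{BHPV}) to the flat morphism $f$ and the line bundle $\kl^{\tensor n}$. The case $n=0$ is immediate: $f_*\ko_S=\ko_C$ because $f$ has connected fibres, and the fibre is $H^0(S_c,\ko_{S_c})=\IC$. I therefore read the rank statement for $n\geq 1$; for $n=0$ the rank is $1$. Since $C$ is a smooth curve and $S$ is smooth (hence Cohen–Macaulay), $f$ is flat and $\kl^{\tensor n}$ is flat over $C$. So it suffices to prove that $c\mapsto h^0(S_c,\kl_c^{\tensor n})$ is constant, equal to $2n$. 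Grauert's theorem then shows that $f_*\kl^{\tensor n}$ is locally free of that rank and that the base change map $\ke_n\otimes k(c)\to H^0(S_c,\kl_c^{\tensor n})$ is an isomorphism.

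To compute the fibre cohomology, I would use that every fibre $S_c$ is a Gorenstein curve of arithmetic genus one with $\omega_{S_c}$ numerically trivial; by adjunction $\omega_{S_c}=(K_S+S_c)|_{S_c}$, and $K_S$ is $f$-nef with $K_S\cdot S_c=0$. Riemann–Roch on $S_c$ gives $\chi(\kl_c^{\tensor n})=\deg \kl_c^{\tensor n}=2n$. It remains to show $h^1(S_c,\kl_c^{\tensor n})=0$ for $n\ge1$, that is, by Serre duality, $H^0(S_c,\omega_{S_c}\otimes\kl_c^{-n})=0$.

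This vanishing is the main obstacle, since $S_c$ may be reducible or non-reduced (for instance $2F$), and $\kl$ is only $f$-nef, so it can have degree $0$ on some components. I would argue as follows. Suppose a nonzero section $s$ of $M=\omega_{S_c}\otimes\kl_c^{-n}$ existed. The line bundle $M$ has degree $\le 0$ on every component and total degree $-2n<0$. The standard approach is Ramanujam's connectedness lemma, or the numerical argument in \cite[Chapter II]{BHPV} for subcurves of fibres. Let $Z\le S_c$ be the maximal subcurve on which $s$ vanishes. Then $s$ induces a nonzero section of $M(-Z)|_{S_c-Z}$ that does not vanish identically on any component. This forces $\deg\big(M(-Z)\big)|_{\Gamma}\ge0$ for every component $\Gamma$ of $S_c-Z$.

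Since $\omega$ is numerically trivial on the fibre, this amounts to $-n\kl\cdot\Gamma - Z\cdot\Gamma\ge0$. Summing with multiplicities over $S_c-Z$ gives $-2n - Z\cdot(S_c-Z)\ge 0$, that is, $Z^2\ge 2n>0$. This contradicts Zariski's lemma, which says $Z^2\le 0$ for every subdivisor of a fibre. The case $Z=0$ is included: it gives $-2n\ge0$ directly. Hence $h^1=0$ and $h^0=2n$ for all $c$, and Grauert's theorem yields both claims.
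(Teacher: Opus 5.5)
Your route is the paper's. The paper only cites \cite[Chapter I, Theorem 8.5 (iv)]{BHPV}, i.e.\ Grauert / cohomology and base change, and takes the fibrewise vanishing $h^1(S_c,\kl_c^{\otimes n})=0$ for granted. You are also right that the statement is off for $n=0$, where $\ke_0=\ko_C$ has rank $1$.

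The one step you prove yourself, however, contains an error. Summing $-n\,\kl\cdot\Gamma-Z\cdot\Gamma\ge 0$ with multiplicities over $S_c-Z$ and using $Z\cdot S_c=0$ gives
\[ Z^2\ \ge\ n\,\kl\cdot(S_c-Z), \]
not $Z^2\ge 2n$. You replaced $\kl\cdot(S_c-Z)$ by $\kl\cdot S_c=2$, which silently assumes $\kl\cdot Z=0$. Since $\kl$ is only $f$-nef, the correct right-hand side is merely $\ge 0$. The resulting inequality $Z^2\ge 0$ does not contradict the bound $Z^2\le 0$ you quote from Zariski's lemma, so as written the argument does not close.

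The repair needs the equality case of Zariski's lemma: for $0\le Z\le S_c$ one has $Z^2<0$ unless $Z$ is a rational multiple of $S_c$.
\begin{itemize}
\item If $Z$ is not such a multiple, then $Z^2<0\le n\,\kl\cdot(S_c-Z)$, a contradiction.
\item If $Z=rS_c$, then $r<1$ because $s\neq 0$. Hence $\kl\cdot(S_c-Z)=2(1-r)>0$ while $Z^2=0$, again a contradiction. The case $Z=0$ is $r=0$.
\end{itemize}
The second case is not vacuous; it is exactly where multiple fibres enter. For $S_c=2F$ and $Z=F$ one has $Z^2=0$, and the contradiction comes from $\kl\cdot F=1>0$, not from the sign of $Z^2$.

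With this case distinction your argument is complete.
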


Lastly, we observe that since $f\colon S\to C$ is relatively minimal, the line bundle $\kl$ induces an involution on the generic fibre $S_{\eta}$ that extends to a biregular involution on both $S$ and $X$. In particular, we can decompose each locally free sheaf $\ke_n$ into a direct sum of its invariant and anti-invariant parts: $\ke_n = \ke_n^{+} \oplus \ke_n^{-}$. Furthermore, the rational map $\varphi$ realises $S$ \textbf{birationally} as a double cover of the ruled surface $W \coloneqq \IP(\ke_1)$, whose branch divisor $B$ satisfies $B \cdot \Gamma = 4$ for a fibre $\Gamma$ of $W \to C$. We summarise this framework in the following diagram, where $\tau$ (resp. $\tilde \tau$) denotes the aforementioned involution on $S$ (resp. $X$).

\begin{figure}[h]
\centering
\begin{tikzcd}[row sep=1.5em, column sep=3 em]
S \arrow[r, "\varphi"] \arrow[d, "f"'] \arrow[loop above, "\tau", distance=2em, in=45, out=125]
  & X \arrow[dashed, d, "\theta"] \arrow[dl, "\tilde f" description]
    \arrow[loop above, "\tilde \tau", distance=2em, in=45, out=125] \\
C 
  & \quad W \supseteq B \arrow[l] \\
\end{tikzcd}
\label{fig:double-cover}
\end{figure}

Making this (birational) double cover description explicit will be the content of the next section, while in Section \ref{sect:determinantal}, we will explicitly describe $X$ as embedded in a weighted projective bundle, a description that stems from a careful local analysis of the relative section ring, which we do in Section \ref{sect: local models}. We call the latter the determinantal model. 

From the perspective of moduli theory, the ample model $X$ is a natural birational model to consider, aligning with modern approaches to compactifying moduli spaces of elliptic surfaces as in \cite{KD_moduli}, where the authors consider the case of relative degree one; see also Remarks \ref{rem: bisection} and \ref{rem:rel_lc}. Depending on the specific class of surfaces being parameterised, one may choose between the birational double-cover and determinantal descriptions. Concrete examples of marked elliptic surfaces as in Definition \ref{def:markedpair}, which are of interest to us, arise from an elliptic fibration admitting double fibres, a bisection, or both. Two important classes are Halphen surfaces of index two, discussed in Section \ref{sec:Halphen}, and Enriques surfaces, treated in Section \ref{sect: Enriques}.

\section{Birational double cover models}
\label{sect: birational double cover model}

Let  $(\tilde f\colon X\to C, \tilde \kl)$ be the ample model of a marked elliptic surface of relative degree two $(f \colon S \to C, \kl)$, which we henceforth fix. 
The purpose of this section is to make the description of the birational double cover $\theta\colon X \dashrightarrow W$ explicit, focusing on the case where there is at least one double fibre, and to give a concrete recipe for constructing elliptic surfaces with double fibres beyond Halphen and Enriques surfaces. 

We continue to use the notation from Section \ref{sect:marked_and_ample} and will need some lemmata that are well known and follow from, e.g., \cite{catanese-franciosi96} or \cite{CFHR}. The main ingredient is that the 
Kodaira fibre has arithmetic genus one, but every proper subcurve has arithmetic 
genus zero. An alternative argument is given by the local analysis in Section \ref{sect: local models}.

First, for the double fibres, $2F_1,\ldots,2F_k$, the relative linear systems $|\kl|_{F_i}|$ verify the following.

\begin{lem}\label{lem: simple_base_pts}
    For each $F_i$, the linear system $|\kl|_{F_i}|$ has a unique base point $q_i$, which is a simple base point.
\end{lem}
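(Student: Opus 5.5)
Since $2F_i$ is a fibre, $\kl\cdot F_i = \tfrac12\,\kl\cdot 2F_i = 1$, so $\kl|_{F_i}$ is a line bundle of total degree one on the reduced curve $F_i$, which is of type $I_n$ for some $n\geq 0$. The plan is to compute $h^0(F_i,\kl|_{F_i})$ by Riemann--Roch, show that the resulting section is nonzero and vanishes at exactly one point with multiplicity one, and then relate this to the restriction of the relative linear system.

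For the count, recall that $F_i$ has arithmetic genus one and $\omega_{F_i}\isom\ko_{F_i}$; this is adjunction, since $K_S\cdot F_i=0$ and $F_i^2=0$, and for Kodaira cycles the dualising sheaf is trivial. Riemann--Roch gives $\chi(\kl|_{F_i})=1$. Serre duality gives $h^1(\kl|_{F_i}) = h^0(\kl^{-1}|_{F_i})$. I expect this to vanish, and the argument goes component by component on the cycle:
\begin{itemize}
\item the $\kl$-degrees of the components are nonnegative, because $\kl$ is $f$-nef;
\item these degrees sum to $1$, so exactly one component $\Theta_0$ has degree one and all others have degree zero;
\item a section of a line bundle on a cycle of rational curves that has degree $\leq 0$ on every component and negative degree on one component must vanish. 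It is zero on $\Theta_0$, hence zero at the nodes, and it propagates as zero around the cycle.
\end{itemize}
Hence $h^0(\kl|_{F_i})=1$. The unique section $s$ is not identically zero on any component. If it vanished on a degree-zero component, it would be zero at that component's nodes, and the same propagation argument would make it zero everywhere.

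Next I locate the zero. The section $s$ has no zeros on the degree-zero components and exactly one zero on $\Theta_0$, which is simple and lies at a smooth point of $F_i$. The point is not a node, because vanishing at a node would force vanishing on the adjacent component. Call this point $q_i$. For the smooth case $n=0$ this is just the statement that a degree-one bundle on an elliptic curve is $\ko(q_i)$.

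The remaining step, which I expect to be the main obstacle, is to identify $|\kl|_{F_i}|$ as meant in the lemma with this one-dimensional space. The restriction of the relative sections, that is the image of $\ke_1\otimes k(c_i)\isom H^0(2F_i,\kl|_{2F_i})\to H^0(F_i,\kl|_{F_i})$, must be nonzero. By Lemma~\ref{lem:torsion}, $H^0(2F_i,\kl|_{2F_i}) = H^0(\kl|_{F_i})\oplus H^0(\kl(-F_i)|_{F_i})$. The same analysis applied to $\kl(-F_i)|_{F_i}$, which again has degree one, gives one section for each summand. The restriction map is the projection onto the first summand, so it is surjective, and the linear system is $\{q_i\}$ with $q_i$ a simple base point. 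If the intended meaning is instead the full system on $F_i$, this step is unnecessary. Either way, the two summands yield distinct points $q_i$ and $q_i'$, which is consistent with $\ke_1$ having rank $2$.
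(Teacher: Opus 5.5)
Your proof is correct and is essentially the paper's argument made explicit. The paper only cites Catanese--Franciosi/CFHR, with the key input that $F_i$ has arithmetic genus one while every proper subcurve has arithmetic genus zero (or, alternatively, the local analysis of Section~\ref{sect: local models}). Your Riemann--Roch count plus the propagation argument around the cycle of rational curves is exactly that input worked out by hand, and your surjectivity of $H^0(2F_i,\kl|_{2F_i})\to H^0(F_i,\kl|_{F_i})$ via Lemma~\ref{lem:torsion} matches its use in Lemma~\ref{lem: section ring double fibre}.

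The one point you assert without proof is the closing remark $q_i\neq q_i'$. The lemma as stated does not need it, but it is what makes a single blow-up suffice in Theorem~\ref{thm: birational double cover model}. It is immediate: $\kl|_{F_i}\isom\ko_{F_i}(q_i)$ and $\kl(-F_i)|_{F_i}\isom\ko_{F_i}(q_i')$ differ by the non-trivial bundle $\ko_{F_i}(-F_i)$, so the two points cannot coincide.
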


For the non-multiple fibres, we have the following more general and standard geometric characterisation.

\begin{lem}\label{lem: linear systems on Kodaira fibres}
 Let $F$ be a (non-multiple) Kodaira fibre and $D$ a Cartier divisor on $F$,  which 
has non-negative degree on every component of $F$. If $\deg_F D=2$, then $h^0(F, D) = 2$, $h^1(F, D) = 0 $ and  $|D|$ defines a morphism 
 \[
 \begin{tikzcd}
  F \rar & \bar F \rar & \IP^1
 \end{tikzcd},
 \]
where the first arrow contracts the components of $F$ on which $D$ has degree 
$0$ and the second is a double cover branched over four (not necessarily 
distinct) points.
\end{lem}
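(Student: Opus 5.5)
The plan is to use the two structural facts about a Kodaira fibre $F$ recalled just before the lemma: $F$ has arithmetic genus one with $\omega_F\isom\ko_F$ (since $F$ is a non-multiple fibre, $\ko_F(F)$ is trivial and adjunction applies), and every proper subcurve $F'\subsetneq F$ has $p_a(F')\le 0$.

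\emph{Cohomology.} By Serre duality on the Gorenstein curve $F$, $h^1(F,D)=h^0(F,-D)$. A nonzero section of $\ko_F(-D)$ must vanish identically on every component $F_j$ with $\deg_{F_j}D>0$, because $-D$ has negative degree there. Since $\deg_F D=2>0$ such a component exists, so the section vanishes on a nonempty subcurve. I would then argue via the standard connectedness and $1$-connectedness of Kodaira fibres, as in \cite{catanese-franciosi96}, to conclude that the section is identically zero; a section of a degree-zero bundle on the complementary subcurve that vanishes at the intersection points with the rest is forced to vanish. Hence $h^1=0$, and Riemann--Roch gives $h^0(F,D)=\deg D+\chi(\ko_F)=2$.

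\emph{Base point freeness.} Let $p\in F$ be a smooth point of $F_{\mathrm{red}}$ on a component where $D$ has positive degree. Consider $0\to\ko_F(D-p)\to\ko_F(D)\to\IC_p\to0$. We have $\deg(D-p)=1$ and $D-p$ is still non-negative on all components, so the same argument gives $h^1(D-p)=0$ and $h^0(D-p)=1$. Thus $p$ is not a base point. For points on degree-zero components, or at singular points, I would use the general criterion from \cite{CFHR}: $|D|$ is base point free as soon as $\deg_{F'}D\ge 2p_a(F')$-type inequalities hold for all subcurves $F'$, and these follow from $p_a(F')\le0$ for proper subcurves and $\deg_F D=2$. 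This base point freeness step, in particular at non-reduced points of the fibre (types $I_k^*$, $II^*$, and so on) and at nodes, is the step I expect to be the main obstacle. If the general criterion is awkward to cite, I would fall back on the local analysis of Section~\ref{sect: local models}.

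\emph{Factorisation and double cover.} With $|D|$ a base point free pencil, we get $\psi\colon F\to\IP^1$ of degree $\deg D=2$. A component $F_j$ is contracted precisely when $\deg_{F_j}D=0$, so $\psi$ factors through the contraction $F\to\bar F$ of these components. The induced map $\bar F\to\IP^1$ is finite of degree $2$, hence a double cover, and $\bar F$ has arithmetic genus one (contracting trees of rational curves on which $D$ is trivial preserves $h^1(\ko)$). Riemann--Hurwitz, or equivalently the double cover formula $p_a=\frac{b}{2}-1$ with $\km^2=\ko(B)$, then shows that the branch divisor has degree $4$, counted with multiplicity.
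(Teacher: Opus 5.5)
Your proposal is correct and follows essentially the paper's route. The paper gives no separate proof; it refers to \cite{catanese-franciosi96} and \cite{CFHR}, with exactly your key input that $p_a(F)=1$ while every proper subcurve $F'<F$ has $p_a(F')\le 0$. That input drives both the vanishing $h^0(F,-D)=0$ and the base-point-freeness criterion $\deg_{F'}D\ge 2p_a(F')$.

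One small point: your direct argument via $\ko_F(D-p)$ only makes sense when $p$ lies on a component of multiplicity one. On a double component, for instance the central curve of an $I_0^*$ fibre, which can carry degree one of $D$, the point $p$ is not a Cartier divisor on $F$. So apply the CFHR criterion uniformly to all points rather than only to the leftover ones.
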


With these in mind, we can now introduce what we call the birational double cover model of $f \colon S \to C$ by proving the following.

\begin{thm}\label{thm: birational double cover model}
 There is a commutative diagram
 
\begin{equation}\label{diagram: model}
	\begin{tikzcd}
		& & \hat X \arrow{dl}[swap]{ \sigma} \arrow[bend left]{drr}{\hat \theta} \arrow{ddl}{\hat f}
		\arrow{dr}{\bar \sigma} \\
		S \rar{\varphi} \arrow{dr}[swap]{f} & X \dar[swap]{\tilde f} && \bar X 
		\dar[swap]{\bar f} \rar{\bar \theta} 
		&W=\IP(\ke_1)\arrow{dl}{\alpha}\\
		& C \arrow[equal]{rr} && C
	\end{tikzcd}.
\end{equation}

where:  
\begin{enumerate}
 \item $\sigma$ is the blow up at the $k$ points $\varphi(q_i)$ with total exceptional divisor $\hat E = \sum_i \hat E_i$ and $\hat f$ is the induced fibration;
 \item $\hat \theta$ is the morphism induced by the surjection 
 \[\hat f^* \ke_1 \isom  \hat f^*\hat f_* \left(\sigma^* \tilde \kl(-\hat E)\right) \onto \sigma^*\tilde \kl(-\hat E);\]
 \item$\bar \sigma$ is the Stein factorisation of $\hat \theta$ and given by the contraction of the elliptic curves   $\hat G_i = \inverse{\sigma}_* (G_i)$;
 \item $\bar \theta$ is a double cover with branch divisor of the form $B = B_0 + \sum_i \Gamma_i$, where $\Gamma_i$ is the fibre of $\alpha: W \to C$ over $c_i$, and associated line bundle (see Section \ref{sect:double covers})  $\km=\alpha^*\kn \tensor \ko_W(2)$, for some $\kn \in \Pic(C)$. 
\item  At the points $p_i = \bar \theta(\hat G_i)$, the images of the elliptic singularities of $\bar X$, the divisor $B$ has a singularity of type $J_{2,n}$, which means that $B_0$ has a singularity of type $A_{n+3}$. 
 \end{enumerate}
\end{thm}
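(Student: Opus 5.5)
The plan is to build the diagram from left to right and then read off the branch data. \emph{Step 1 (base points).} By Lemma~\ref{lem: linear systems on Kodaira fibres}, on every non-multiple fibre the restriction $\ke_1\otimes k(c)\to H^0(S_c,\kl_c)$ is a $2$-dimensional base point free system; the restriction map is an isomorphism here because $h^1(S_c,\kl_c)=0$ and cohomology and base change applies. The evaluation map $f^*\ke_1\to\kl$ is therefore surjective away from the double fibres. On a double fibre $2F_i$ I would use Lemma~\ref{lem:torsion} to write $H^0(\kl|_{2F_i}) = H^0(\kl|_{F_i})\oplus H^0(\kl|_{F_i}(-F_i))$. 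The image of $\ke_1\otimes k(c_i)$ is two-dimensional. I need to check that it restricts to $F_i$ as the system $|\kl|_{F_i}|$ of Lemma~\ref{lem: simple_base_pts}, with a single simple base point $q_i$.

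The cokernel of $f^*\ke_1\to\kl$ is then supported at the points $q_i$, which are smooth points of $S$, and the ideal there is the maximal ideal (simplicity). Since $\varphi$ is an isomorphism near $q_i$ (as $\kl$ has positive degree on $F_i$), blowing up $\varphi(q_i)$ resolves the base locus. This gives (1), and gives (2) with $\hat f_*(\sigma^*\tilde\kl(-\hat E))=\ke_1$; the last equality holds because sections of $\ke_1$ vanish at $q_i$ to order exactly one generically.

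\emph{Step 2 (Stein factorisation).} The morphism $\hat\theta$ has degree $2$ on general fibres, since $\sigma^*\tilde\kl(-\hat E)$ has relative degree $2$. Its positive-dimensional fibres are curves on which the pulled-back $\ko_W(1)$ has degree $0$. On non-multiple fibres, $\varphi$ has already contracted all degree-$0$ components, so nothing is contracted there. On $2\hat G_i$ we compute $\sigma^*\tilde\kl(-\hat E)\cdot\hat G_i = 1-1 = 0$, while $\hat E_i$ maps isomorphically onto $\Gamma_i$. So $\bar\sigma$ contracts exactly the $\hat G_i$. These are elliptic curves (or cycles, as $F_i$ is of type $I_k$) with $\hat G_i^2=-1$, and the resulting $\bar\theta$ is finite of degree $2$. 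This gives (3).

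\emph{Step 3 (branch data).} By Section~\ref{sect:double covers}, $\bar X$ is normal and $\bar\theta$ is a flat double cover. The double cover is determined by $\km$ with $\bar\theta_*\ko_{\bar X}=\ko_W\oplus\km^{-1}$, and $B\in|\km^{2}|$. Since $B\cdot\Gamma=4$, we have $\km=\ko_W(2)\otimes\alpha^*\kn$ for some $\kn\in\Pic(C)$. Over $c_i$, the pullback of $\Gamma_i$ is $2\hat G_i + 2\hat E_i$ modulo the contracted part; on $\bar X$ this becomes twice the image of $\hat E_i$. So $\bar\theta$ ramifies along $\Gamma_i$, and $\Gamma_i\subset B$. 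Write $B = B_0 + \sum_i\Gamma_i$.

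\emph{Step 4 (singularity type).} The point $p_i$ is the image of the contracted curve $\hat G_i$, which is a minimally elliptic singularity or cusp with exceptional curve of self-intersection $-1$. By the classification in Example~\ref{ex: singularity for double fibre}, I would identify the local branch singularity. First, $B$ has a point of multiplicity $\ge3$ at $p_i$: a double cover branched in a curve with only ADE-type (multiplicity $\le3$, with two infinitely near triple points excluded) singularities has only rational singularities. Next, $B_0\cdot\Gamma_i=4$ is concentrated at $p_i$. This holds because $\hat G_i$ lies over the single point $p_i$, so the reduced fibre of $\bar X\to C$ over $c_i$ is the irreducible image of $\hat E_i$, totally ramified only at $p_i$. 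Then I run the resolution procedure of Example~\ref{ex: singularity for double fibre} backwards: the singularity type $T_{2,3,6+n}$, resp. the cycle of curves, determines that $B_0$ has an $A_{n+3}$ singularity tangent to order $4$ with $\Gamma_i$, i.e. type $J_{2,n}$.

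I expect Step 4 to be the main obstacle, namely showing that $B_0$ has an $A$-type singularity rather than some worse quadruple point or non-reduced behaviour. I would try to settle this directly, using the local normal form of $\kl$ near $F_i$ from Section~\ref{sect: local models} to write down local equations for $\bar X$ over $W$.
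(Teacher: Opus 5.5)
Your proposal follows essentially the same route as the paper. Surjectivity comes from the simple base points, then $\hat f_*(\sigma^*\tilde\kl(-\hat E))\cong\ke_1$, the Stein factorisation contracts exactly the $\hat G_i$, $\Gamma_i\subset B$ because $\bar f^{-1}(c_i)=2\bar E_i$, and the degree-one elliptic singularity $T_{2,3,6+n}$ is matched with a $J_{2,n}$ point of $B$ via Example~\ref{ex: singularity for double fibre}. Two justifications need adjusting. The equality $\hat f_*(\sigma^*\tilde\kl(-\hat E))=\ke_1$ holds because every local section of $\ke_1$ vanishes at $\varphi(q_i)$; order-one vanishing is what gives surjectivity in (ii). And $B_0\cap\Gamma_i=\{p_i\}$ follows from the smoothness of $\bar X$ along $\bar E_i\setminus\{p_i\}$ (since $\hat X$ is smooth along $\hat E_i$, and any point of $B_0\cap\Gamma_i$ is a singular point of $B$), not from the irreducibility of the reduced fibre.
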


Then $\theta = \hat \theta \circ \inverse \sigma$ and we call $\bar \theta \colon \bar X \to W$ the \newnotion{birational double cover model} of $(S, \kl)$. Note that on $\bar{X}$, the induced relative polarisation is simply $\theta^*\ko_W(1)$.

\begin{proof}
We first observe that surjectivity in $(ii)$ follows from Lemmas \ref{lem: simple_base_pts} and \ref{lem: linear systems on Kodaira fibres}. 

Next, we establish the isomorphism $\hat f_* \hat \theta^* \ko_W(1)=\hat{f}_*\left(\sigma^* \tilde{\kl}( - \hat{E})\right) \cong \ke_1$. Consider the natural restriction sequence on $\hat{X}$ associated with the total exceptional divisor $\hat{E} = \sum_{i=1}^k \hat{E}_i$:
\[ 
\begin{tikzcd}
0 \rar & \sigma^*\tilde{\kl}(-\hat{E}) \rar & \sigma^*\tilde{\kl} \rar & \bigoplus_{i=1}^k \ko_{\hat{E}_i} \rar & 0.  
\end{tikzcd}
\]
Taking the direct image under the fibration $\hat{f}_*$ yields the long exact sequence of sheaves on the base curve $C$:
\[
\begin{tikzcd}[row sep = small]
0 \rar & \hat{f}_* \left(\sigma^*\tilde{\kl}(-\hat{E})\right) 
\rar & \hat{f}_* \left(\sigma^*\tilde{\kl}\right) \rar{\delta} & \bigoplus_{i=1}^k \ko_{c_i} \rar &{}\\
{} \rar & R^1\hat{f}_* \left(\sigma^*\tilde{\kl}(-\hat{E})\right) \rar & R^1\hat{f}_* \left(\sigma^*\tilde{\kl}\right) \rar & 0, 
\end{tikzcd}
\]
where, by the projection formula, $R^1\hat{f}_* \left(\sigma^*\tilde{\kl}\right) \cong R^1f_*\kl = 0$ since $\kl$ restricted to any fibre has no higher cohomology by our assumptions. 

Because the points $\varphi(q_i)$ are the unique base points of the relative linear systems on the double fibres, every local section of $\hat{f}_* \left(\sigma^*\tilde{\kl}\right)$ vanishes along the exceptional curves $\hat{E}_i$. Consequently, the evaluation map $\delta$ is identically the zero morphism, yielding the isomorphism:
\[ 
\hat{f}_* \left(\sigma^*\tilde{\kl}(-\hat{E})\right) \cong \hat{f}_* \left(\sigma^*\tilde{\kl}\right) \cong f_*\kl = \ke_1. 
\]
Furthermore, the vanishing of $\delta$ combined with the exactness of the remaining terms provides an identification 
\[ 
\textstyle\bigoplus_{i=1}^k \ko_{c_i} \isom R^1\hat{f}_* \left(\sigma^*\tilde{\kl}(-\hat{E})\right). 
\]

Now, the map $\hat{\theta}$ is generically of degree two. Taking its Stein factorisation yields $\bar{\sigma}\colon \hat{X} \to \bar{X}$ and $\bar{\theta} \colon \bar{X}\to W$, where $\bar{\theta}$ is a double cover, and $\bar{\sigma}$ contracts precisely those curves on which $\sigma^*\tilde{\kl} (-\hat{E})$ has degree zero. Since $\tilde{\kl}$ is relatively ample on $X$ and $\deg_{\hat{E}_i} (\sigma^*\tilde{\kl} (-\hat{E})) = -(\hat{E}_i^2) = 1$, the exceptional curves are not contracted, while the strict transforms of the double fibres are contracted since $\deg_{\hat{G}_i}(\sigma^*\tilde{\kl} (-\hat{E})) = 0$. This proves $(iii)$.

For the statement $(iv)$, we track the scheme-theoretic fibre over $c_i$. We have $\hat{f}^{-1} (c_i) = 2(\hat{E}_i + \hat{G}_i)$, and thus on $\bar{X}$, $\bar{f}^{-1}(c_i) = 2 \bar{E}_i$. Now, because the ruled surface $W$ contains no multiple fibres, each fibre $\Gamma_i$ over $c_i$ must be entirely contained in the branch locus of $\bar{\theta}$. 

Finally, because $\hat{G}_i^2 = -1$, the resulting elliptic singularity on $\bar{X}$ is, in Arnold's classification, a singularity of type $T_{2,3, 6+n} = J_{2,n}$. It follows that the branch curve $B$ must possess a singularity of the same type at $p_i = \bar{\theta}(\hat{G}_i)$, which implies that the remaining component $B_0$ meets $\Gamma_i$ with an $A_{n+3}$ singularity (cf., Example \ref{ex: singularity for double fibre}).
\end{proof}

\begin{rem}\label{rem: birational model in wp bundle}
	As is standard for double covers of projective bundles, we can naturally embed the birational double cover model $\bar X$ as a relative hypersurface of degree four in a weighted $\IP(1,1,2)$-bundle (see Section \ref{sect:wpb} for notation), namely,
	\[
	\begin{tikzcd}
		\bar X \rar[hookrightarrow] \dar{\bar f}\arrow{dr}{\bar\theta} & \IP_{(1,2)}(\ke_1, \kn^\vee)\dar[dashed]\\
		C & \IP(\ke_1)\lar[swap]{\alpha}
	\end{tikzcd}.
	\]
\end{rem}

\begin{lem}\label{lem: can bundle birat model}
With notations as in Theorem \ref{thm: birational double cover model}, we have an isomorphism of line bundles 
 $\ke_2^-\isom \kn^\vee(\sum_i c_i)$. In particular, the relative canonical bundle on the ample model is given by $\omega_{X/C} = \tilde f^*\left(\det \ke_1\tensor \kn\right) \left(-\tsum_{i=1}^k G_i\right)$, and its pushforward on $C$ can be expressed as
\[
\tilde f_*\omega_{X/C} = \det \ke_1\tensor \left(\ke_2^-\right)^\vee = \det \ke_1\tensor \kn \left(-\tsum_{i=1}^k c_i\right).
\]
 \end{lem}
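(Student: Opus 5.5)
Our plan is to compute the canonical bundle of $\bar X$ from the double cover formula, transport it to $X$, and then identify $\ke_2^-$ by pushing forward. First, by Section~\ref{sect:double covers}, $\omega_{\bar X}=\bar\theta^*(\omega_W\otimes\km)$. For the ruled surface $W=\IP(\ke_1)$ we have $\omega_{W/C}=\ko_W(-2)\otimes\alpha^*\det\ke_1$; up to dualising, this is the convention fixed by $\ke_1=\alpha_*\ko_W(1)$. Since $\km=\alpha^*\kn\otimes\ko_W(2)$, we obtain
\[
\omega_{\bar X/C}=\bar\theta^*\alpha^*(\det\ke_1\otimes\kn)=\bar f^*(\det\ke_1\otimes\kn).
\]

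Next we pull back to $\hat X$ and push down to $X$. The point $\bar\sigma(\hat G_i)$ is a minimally elliptic singularity whose fundamental cycle is $\hat G_i$, with $\hat G_i^2=-1$. Adjunction on $\hat G_i$ gives $K_{\hat X}\cdot\hat G_i=1$, so
\[
\omega_{\hat X}=\bar\sigma^*\omega_{\bar X}(-\tsum\hat G_i).
\]
On the other side, $\sigma$ is a blow-up at smooth points, so $\omega_{\hat X}=\sigma^*\omega_X(\hat E)$. Combining these, and using $\hat f^*(c_i)=2\hat E_i+2\hat G_i$, we get
\[
\sigma^*\omega_{X/C}=\hat f^*(\det\ke_1\otimes\kn)(-\tsum(\hat G_i+\hat E_i))=\sigma^*\bigl(\tilde f^*(\det\ke_1\otimes\kn)(-\tsum G_i)\bigr),
\]
because $\sigma^*G_i=\hat G_i+\hat E_i$. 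Since $\sigma^*$ is injective on $\Pic$, this gives the stated formula for $\omega_{X/C}$.

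The pushforward formula then follows from the canonical bundle formula~\eqref{eq: can bundle formula}. Comparing the two expressions, $\tilde f^*(\tilde f_*\omega_{X/C})(\sum G_i)=\tilde f^*(\det\ke_1\otimes\kn)(-\sum G_i)$, and $2G_i=\tilde f^*c_i$, so $\tilde f_*\omega_{X/C}=\det\ke_1\otimes\kn(-\sum c_i)$.

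It remains to identify $\ke_2^-\cong\kn^\vee(\sum c_i)$, and we expect this to be the main obstacle. We first work on $\bar X$. There $\bar\theta_*\ko_{\bar X}=\ko_W\oplus\km^{-1}$, hence
\[
\bar f_*\bar\theta^*\ko_W(2)=\Sym^2\ke_1\oplus\alpha_*(\km^{-1}(2))=\Sym^2\ke_1\oplus\kn^\vee,
\]
and the anti-invariant part is $\kn^\vee$. We must then compare $\bar\theta^*\ko_W(2)$, pulled back to $\hat X$, with $\sigma^*\tilde\kl^{2}$. By Theorem~\ref{thm: birational double cover model}(ii), the former is $\sigma^*\tilde\kl^2(-2\hat E)$, which equals $\sigma^*\tilde\kl^2\otimes\hat f^*\ko_C(-\sum c_i)\otimes\ko(2\hat G)$. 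Pushing forward, and using that sections vanishing to order $2$ on $\hat E+\hat G$ are exactly the pullbacks of $\ko_C(-c_i)$, this yields an inclusion
\[
\hat f_*(\sigma^*\tilde\kl^2(-2\hat E))\otimes\ko_C(\tsum c_i)\supseteq\ke_2
\]
with cokernel supported at the $c_i$. The delicate point is to show that this inclusion is an isomorphism on the anti-invariant parts, which we plan to do by a local computation at each double fibre. Consider the rank-$4$ space $H^0(\kl^2|_{2F_i})$, split into $\tau$-eigenspaces. The invariant part of the pullback agrees with $\Sym^2\ke_1$ away from the $c_i$ and is saturated there. For the anti-invariant part, we will use the local model from Section~\ref{sect: local models}: the anti-invariant generator $y$ of degree $2$ does not vanish on $G_i$, whereas the corresponding section on $\bar X$, namely $t$ with $t^2=s_B$, vanishes on $\hat G_i$ because $\Gamma_i\subset B$. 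This shows $\ke_2^-=\kn^\vee(\sum c_i)$. Alternatively, we may obtain this step by relative duality, $\ke_2^-\cong(\tilde f_*\omega_{X/C})^\vee\otimes\det\ke_1$, via the trace pairing $\ke_1\otimes\ke_1\to\ke_2$, whose anti-invariant part is $\wedge^2\ke_1\to\ke_2^-$ and vanishes exactly at the double fibres.
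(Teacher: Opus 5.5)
Your computation of the canonical bundle is fine: the double cover formula on $\bar X$, the comparison $\omega_{\hat X}=\sigma^*\omega_X(\hat E)=\bar\sigma^*\omega_{\bar X}(-\hat G)$, and the deduction of $\tilde f_*\omega_{X/C}=\det\ke_1\otimes\kn(-\sum_i c_i)$ from the canonical bundle formula all go through. The paper instead computes $\tilde f_*\ko_X(-\sum_i G_i)$ directly, but either route works.

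The gap is where you expected it, in $\ke_2^-\cong\kn^\vee(\sum_i c_i)$. The middle equality $\tilde f_*\omega_{X/C}=\det\ke_1\otimes(\ke_2^-)^\vee$ also rests on this. Your reduction is right: $\ke_2^-\subseteq\hat f_*\bigl(\sigma^*\tilde\kl^{\otimes 2}(2\hat G)\bigr)^-=\kn^\vee(\sum_i c_i)$. Since $\sigma^*\tilde\kl^{\otimes 2}(2\hat G)=\hat\theta^*(\ko_W(2)(\Gamma))$ is pulled back from $W$, its local anti-invariant sections are $t\cdot g$ with $g$ coming from $W$. Hence equality holds if and only if $t$ vanishes to order at least two along each $\hat G_i$.

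Your local argument does not establish this. First, its premise is false: $H^0(F_i,\kl^{\otimes 2}|_{F_i})$ is spanned by the invariant sections $\bar x_0^2,\bar y_0$. So the generator $Y_1$ of $\ke_2^-$ does vanish on $F_i$, to order exactly one (see the proof of Proposition~\ref{prop: alg model double fibre}). Second, $\Gamma_i\subset B$ only gives $\operatorname{ord}_{\hat G_i}(t^2)\geq\operatorname{ord}_{\hat G_i}(\hat\theta^*\Gamma_i)=2$. That is first-order vanishing of $t$, which is not enough.

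The missing input is the $A_{n+3}$-singularity of $B_0$ at $p_i$. Take coordinates with $\Gamma_i=\{u=0\}$. The curve $\hat G_i$ lies over the second exceptional curve $E_2$ of Example~\ref{ex: singularity for double fibre}, which is not in the branch locus. So $\operatorname{ord}_{\hat G_i}$ restricted to $W$ is the monomial valuation with $\operatorname{ord}(v)=1$, $\operatorname{ord}(u)=2$. The local equation of $B_0$ has weighted order $4$, so $\operatorname{ord}_{\hat G_i}(s_B)=6$ and $\operatorname{ord}_{\hat G_i}(t)=3\geq 2$. This is what the paper packages as the statement that $\ko_{2\hat G}$ is purely invariant, giving $\bigl(\hat\theta_*\ko_{\hat X}(-2\hat G)\bigr)^-=\bigl(\hat\theta_*\ko_{\hat X}\bigr)^-$.

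Your fallback via a trace pairing cannot work either. The multiplication $\ke_1\otimes\ke_1\to\ke_2$ is commutative, so it kills $\wedge^2\ke_1$. Since $\tau$ acts trivially on $\ke_1$, its image lies in $\ke_2^+$ anyway.

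Worse, a nonzero map $\det\ke_1\to\ke_2^-$ vanishing exactly at $c_1,\dots,c_k$ does not exist in general. For a Halphen surface of index two with $\kl=\ko_S(D)$, one computes directly (Section~\ref{subsec:case_I}) that $\det\ke_1=\ko_{\IP^1}(-1)$ and $\ke_2^-\cong\ko_{\IP^1}(-2)$, so $\Hom(\det\ke_1,\ke_2^-)=0$. More generally, such a map combined with the formula you are proving would give $\deg\tilde f_*\omega_{X/C}=-k<0$, whereas this degree equals $\chi(\ko_S)\geq 0$.
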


\begin{proof}
We first establish the isomorphism  $\ke_2^- \isom \kn^\vee\left(\sum_i c_i\right)$. For this, let $\Gamma \coloneqq \sum_{i=1}^k \Gamma_i$ denote the sum of the fibres of $W \to C$ over the points $c_i$, and, similarly, let $\hat G \coloneqq \sum_{i=1}^k \hat G_i$. Then
\begin{align*}
    \sigma^* \tilde\kl^{\tensor 2} & = \hat\theta^* \ko_W(2) (2\hat E) \\
    & = \hat\theta^* \ko_W(2) (2\bar \sigma^* \bar E - 2\hat G) \\
    & = \hat\theta^*\left( \ko_W(2) (\Gamma) \right)(-2\hat G).
\end{align*}
Applying $\hat f_*$ and taking the anti-invariant component yields:
\begin{align*}
    \ke_2^- & = \left(f_* \kl^{\tensor 2}\right)^- \\
    & = \left(\hat f_* \sigma^*\tilde \kl^{\tensor 2}\right)^- \\
    &= \alpha_* \left( \ko_W(2) (\Gamma) \tensor \left(\hat \theta_* \ko_{\hat X} (-2\hat G)\right) ^- \right).
    \end{align*}

Now, consider the ideal sheaf exact sequence associated with $2\hat G$:
    \begin{align*}
    0 & \longrightarrow \ko_{\hat X} (-2\hat G) \longrightarrow \ko_{\hat X} \longrightarrow \ko_{2\hat G} \longrightarrow 0.
\end{align*}
By looking at the local structure of each elliptic singularity as a hypersurface inside a weighted projective space, see \cite[Chapter 4]{reid97}, we deduce that the structure sheaf $\ko_{2\hat G}\simeq \bigoplus_{i=1}^k \ko_{2\hat G_i}$ is purely invariant under the action of the covering involution. Consequently, $\left(\hat \theta_* \ko_{\hat X} (-2\hat G)\right)^- \isom \bigl(\hat \theta_* \ko_{\hat X}\bigr)^-$. The calculation then proceeds as follows.
\begin{align*}
    \ke_2^- & = \alpha_* \left( \ko_W(2) (\Gamma) \tensor \bigl(\hat \theta_* \ko_{\hat X} \bigr) ^- \right)  
    = \alpha_* \left( \ko_W(2) (\Gamma) \tensor \bigl(\bar \theta_* \ko_{\bar X} \bigr) ^- \right) \\
    & = \alpha_* \left( \ko_W(2) (\Gamma) \tensor \left(\alpha^*\kn^\vee(-2)\right) \right)  
    = \alpha_* \left( \alpha^*\kn^\vee (\Gamma) \right) 
    = \kn^\vee\left(\tsum_{i=1}^k c_i\right).
\end{align*}

We now turn to verifying the relative canonical bundle formulas. By the standard Hurwitz formula for finite double covers applied to $\bar \theta \colon \bar X \to W$, we find:
\begin{align*}
 \omega_{\bar X/C} &= \bar \theta^* \left(\omega_{W/C}\tensor \alpha^*\kn (2) \right) \\
 & = \bar \theta^* \left( \alpha^*\det (\ke_1) (-2)\tensor \alpha^*\kn(2) \right) \\
 & = \bar f^* \left(\kn \tensor \det \ke_1\right).
\end{align*}
Since $\sigma$ is a blow-up at smooth points and $\bar \sigma$ resolves $k$ simple elliptic singularities, we have
\[ 
\omega_{\hat X/C} = \sigma^*\omega_{X/C} (\hat E) = \bar \sigma^*\omega_{\bar X/C}( -\hat G). 
\]
Applying $\sigma_*$ and using the projection formula  gives
\begin{align*}
  \omega_{X/C} & = \sigma_* \sigma^*\omega_{X/C} \\
 & = \sigma_* \left( \bar \sigma^* \omega_{\bar X/C} \left( - \hat E -\hat G\right) \right) \\
 & = \sigma_*\left(\hat f^* \left(\kn \tensor \det \ke_1\right) \left( -\hat E - \hat G\right)\right).
 \end{align*}
 Now, since for each $i$ we have $\sigma^* G_i = \hat{G}_i + \hat{E}_i$,
\begin{align*}
 \omega_{X/C} & = \sigma_*\sigma^*\left( \tilde f^* \left(\kn \tensor \det \ke_1\right)\left(-\tsum_{i=1}^k G_i\right)\right) = \tilde f^*\left(\kn \tensor \det \ke_1\right) \left(-\tsum_{i=1}^k G_i\right).
\end{align*}

Finally, pushing forward to the base curve $C$ via $\tilde f_*$ and using the projection formula again yields:
\begin{align*}
  \tilde f_*\omega_{X/C} = \left(\kn \tensor \det \ke_1\right)\tensor \tilde f_*\ko_{X} \left( -\tsum_{i=1}^k G_i\right).
\end{align*}

The identity $\tilde f_*\ko_{X} \left( -\sum_i G_i\right) = \ko_C(-\sum_i c_i)$ then follows from pushing forward the short exact sequence
\[ 
0 \longrightarrow \ko_X(-2\tsum_{i=1}^k G_i) \longrightarrow \ko_{X} \left( -\tsum_{i=1}^k G_i\right) \longrightarrow \bigoplus_{i=1}^k \ko_{G_i}(-G_i) \longrightarrow 0. 
\]
Observing that $\ko_X(-2\sum_i G_i) \isom \tilde f^*\ko_C(-\sum_i c_i)$, and noting that $H^0(G_i, \ko_{G_i}(-G_i)) = 0$ because $\ko_{G_i}(-G_i)$ is a non-trivial torsion line bundle, the higher terms vanish. 
\end{proof}

If we want to use this framework to construct elliptic fibrations with double fibres, we need to find a branch divisor $B \subset W$ with the precise singularity types required to produce the desired elliptic singularities on the double cover $\bar X$, which then pull back to the double fibres on the smooth model $S$. We spell this out in a particular case that frequently arises in applications. We assume that the elliptic surface $f \colon S \to C$ admits a bisection $D \subset S$ and construct the ample model using the relative polarisation $\kl = \ko_S(\sum F_i+D)$ (cf. Remark \ref{rem: bisection}). This choice has the advantage that the singular points $p_i$ of the branch divisor $B$ will all lie on a uniquely determined section of the projective bundle $W \to C$, as we will demonstrate below.

\begin{construction}\label{construction deg 2}
 Let $C$ be a smooth curve and assume that we are given the following data:
 \begin{enumerate}
  \item distinct points $c_1, \dots, c_k\in C$ with $k\geq 1$,
  \item a rank two vector bundle $\ke$ on $C$ with a section giving a short exact sequence $0 \to \ko_C \to \ke \to \det \ke \to 0$ and  defining a ruled surface
  $\alpha \colon  W = \IP(\ke) \to C$ with a section $\Sigma$,
  \item a line bunde $\kn \in \Pic (C)$,
  \item an effective divisor
  \[B_0\in \left|\alpha^*\kn^{\tensor 2}\left( 
4\Sigma-\textstyle\sum_i \Gamma_i\right) \right|\]
  such that with $\Gamma_i = \inverse \alpha(c_i)$ we have 
 \begin{enumerate}
 \item $B_0$ has an $A_{n_i+3}$ singularity at $\Gamma_i \cap \Sigma = \{ 
p_i\}$ with $n_i\geq 0$,
 \item the local intersection multiplicity satisfies $(B_0.\Gamma_i)_{p_i} = 4$, or equivalently $B_0\cap \Gamma_i = \{p_i \}$,
 \item $B_0$ has at most ADE singularities elsewhere. 
 \end{enumerate}

 \end{enumerate}

From this data, we will now construct an elliptic surface $f \colon S \to C$ with $S$ smooth, carrying a distinguished bisection $D$ and having double fibres $2F_1, \ldots, 2F_k$ over the points $c_1, \ldots, c_k$. The case with $k=1$ is illustrated in Figure \ref{fig: picture E12}, which was adapted from \cite{ST}. 

\begin{figure}[ht]
	\caption{A pictorial description when $k=1$}\label{fig: picture E12}
	\begin{tikzpicture}[scale = .85]
		
		\begin{scope}[xshift = 0cm, yshift = 4.5cm] 
			\draw[rounded corners] (-2.0, 1.5) rectangle (1.5, -1.25);
			\node[] at (1.2, 1.2) {};
			
			\draw[2G] (-1, -1.25) to ++ (0, 2.75);
			\draw[G] (-1, -1.25) to node[above right] {$2$} ++ (0, 2.75);
			
			\draw[E1] (-1, -.5) to[out = 0, in = 180] ++(.75, .5) to[out = 0, in = 180] node[above right] {} ++(1,- .5) to[out = 180, in = 0] ++(-.75,- .5) to[out = 180, in = 0] ++(-1,.5);
			
			\draw[pfeil] (-0.25, -1.4) -> node[right] {} node[left]{$2:1$} ++ (0, -1.4); 
			
			\fill[F] (-1, -.5) circle (2pt);
			
			\draw[<-] (-1.2, -.5) -- ++(-1.5, .5) node[above, text width=1.6cm] {\scriptsize elliptic singularity\,\,\,};
		\end{scope}

		\begin{scope}[xshift = 6cm, yshift = 4.5cm]
			\draw[rounded corners] (-2.0, 1.5) rectangle (1.5, -1.25);
			\node[] at (1.2, 1.2) {};
			
			\draw[2F] (-1, -1.25) to ++ (0, 2.75);
			\draw[F] (-1, -1.25) to node[left] {$2$\,\,} ++ (0, 2.75);
			\draw[2G] (-1.1, .25) to ++ (.6, 1);
			\draw[G] (-1.1, .25) to node[right] {$2$} ++ (.6, 1);
			
			\draw[E1] (-1, -.5) ++ (-.25, -.25) to[out = 45, in = 180] ++(1.25, .75) to[out = 0, in = 180]node[above right] {} ++(1,- .5) to[out = 180, in = 0] ++(-.75,- .5);
			
			\draw[pfeil] (-2.2, 0) -> node[below] {resol.} node[above]{min.} ++ (-1.6, 0); 
			
			\draw[pfeil] (-0.25, -1.4) -> node[right] {rel. min. model}  ++ (0, -1.4); 
		\end{scope}

		\begin{scope}[xshift = 0cm, yshift = 0cm]
			\draw[rounded corners] (-2.0, 1.5) rectangle (1.5, -1.25);
			\node[] at (-3.1, -1.0) {$W$};
			
			\draw[E1] (-2, -.5) node[above right] {$\Sigma$} to ++(3.5, 0);
			
			\draw[branch] (-1, -1.25) to ++ (0, 2.75);
			\draw [branch] (-1.5, -.25) to[out= 0, in =90] ++(.5, -.25) to[in = 0,out = -90] ++(-.5, -.25);
			\draw [branch] (-.5, -.25) to[out= 180, in =90] ++(-.5, -.25) to[in = 180,out = -90] ++(.5, -.25) to[out = 0 , in = 180] node[below] {$B_0+\sum_i \Gamma_i$} ++ (1.5, .25) to[out= 0, in =-90] ++(.25, .25) to[in = -90,out = 90] ++(-2, .5) to[in = -90,out = 90] ++(2, .5) to[in = -60,out = 90] ++(-2, .5);
		\end{scope}

		\begin{scope}[xshift = 6cm, yshift = 0cm]
			\draw[rounded corners] (-2.0, 1.5) rectangle (1.5, -1.25);
			\node[] at (1.2, 1.2) {$S$};
			
			\draw[2F] (-1, -1.25) to ++ (0, 2.75);
			\draw[F] (-1, -1.25) to node[left] {$2F_1$}++ (0, 2.75);
			
			\draw[E1] (-1, -.5) ++ (-.25, -.25) to[out = 45, in = 180] ++(1.25, .75) to[out = 0, in = 180] node[above right] {$D$}++(1,- .5) to[out = 180, in = 0] ++(-.75,- .5);
		\end{scope}
		
	\end{tikzpicture}
\end{figure}
Let $B\coloneqq B_0 + \sum_i \Gamma_i$ and  let $\bar 
\theta \colon \bar X \to  W$ be the double cover branched over $B$ defined 
by the isomorphism $\ko_{ W}(B) \isom \left( \alpha^*\kn (2\Sigma) 
\right)^{\tensor 2}$ as explained in Section \ref{sect:double covers}. We remark that the notation here is meant to be consistent with the one in Theorem \ref{thm: birational double cover model}.

Then the construction of $S$ goes as follows: we consider $\mu_1  \colon \hat S \to \bar X$ the minimal resolution and 
$\mu_2 \colon \hat S \to S$ the relative minimal model over $C$, resulting in 
a diagram
\begin{equation}\label{main diagram deg 2}
\begin{tikzcd}
  \bar X \dar[swap]{\bar\theta} & \hat S \lar[swap]{\mu_1} 
\arrow{dr}{\mu_2} \arrow{dl}{\psi}\\
   W \dar && S  \arrow[dashed]{ll}[swap]{} \dar{f} \\
   C \arrow[equal]{rr} &&C
   \end{tikzcd}.
  \end{equation}
Then $S$ is a smooth surface, and the induced morphism $f\colon S \to C$ is a relatively minimal elliptic fibration by construction. Moreover, by Lemma \ref{lem: 
double fibre} below, the  fibre over the point $c_i$  is a multiple fibre of 
type $2 I_{n_i}$, and setting $D ={\mu_2}_*\psi^* \Sigma$ we find a bisection of $f$, because $\Sigma $ is a section of $\alpha \colon W\to C$,   $\psi$ has degree $2$ and  conditions $(iv)$ (a) and (b) imply that $\Sigma$ is not contained in the branch locus $B$. 
\end{construction}

\begin{lem}\label{lem: double fibre}
 In Construction \ref{construction deg 2}, the fibre of $f$ over $c_i$ is a 
double fibre of type $2 I_{n_i}$. 
\end{lem}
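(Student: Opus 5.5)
The plan is to compute the fibre of $\hat S \to C$ over $c_i$ explicitly, using the local resolution of Example \ref{ex: singularity for double fibre}, and then contract $(-1)$-curves. Everything is local near $\Gamma_i$, so I fix one index $i$ and drop it from the notation.

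First, describe $\bar X$ over $c$. Since $\Gamma \subset B$, the reduced preimage $\bar\Gamma = \bar\theta^{-1}(\Gamma)_{\mathrm{red}}$ satisfies $\bar\theta^*\Gamma = 2\bar\Gamma$. Hence the fibre of $\bar X \to C$ over $c$ is $2\bar\Gamma$. Conditions (iv)(a),(b) say that $B_0 \cap \Gamma = \{p\}$. So the restriction of $\bar\theta$ to $\bar\Gamma \to \Gamma$ is an isomorphism (the branch locus is all of $\Gamma$), and $\bar\Gamma$ is a smooth rational curve passing through the unique singular point $\bar p$ over $p$. At $p$ the branch curve $B = \Gamma + B_0$ has a $J_{2,n}$ singularity, with $n = n_i$.

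Second, run Example \ref{ex: singularity for double fibre} and keep track of the fibre. Blow up $p$ and then $p'$, giving exceptional curves $E_1, E_2$. The strict transform $\tilde\Gamma$ of $\Gamma$ has self-intersection $-2$, since it passes through both centres. The branch divisor is $\tilde B = E_1 + \tilde\Gamma + \tilde B_0$. The pullback of the fibre is
\[
\Gamma^* = \tilde\Gamma + E_1 + 2E_2 .
\]
On the double cover $\tilde Y$ the branch components lift with multiplicity two, so the fibre there is
\[
2\bar\Gamma' + 2E_1' + 2\psi^*E_2 ,
\]
where $\bar\Gamma'$ and $E_1'$ are the reduced preimages and $\psi^*E_2$ is the preimage of $E_2$. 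I compute self-intersections on the cover: $\bar\Gamma'^2 = \tilde\Gamma^2/2 = -1$ and $E_1'^2 = -1$. The preimage of $E_2$ is the curve described in the three cases of the Example: an elliptic curve ($n=0$), a nodal rational curve ($n=1$), or, after also resolving the $A_{n-1}$ point, a cycle of $n$ rational curves. Contracting $E_1'$ gives a fibre of $\hat S$ of the form
\[
2\bar\Gamma' + 2Z ,
\]
with $Z$ that elliptic curve or cycle.

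Third, pass to the relatively minimal model. After contracting $E_1'$, the curve $\bar\Gamma'$ is still a $(-1)$-curve, because it met $E_1'$ transversally only if it met it at all. I need to check this by tracking intersections: $\tilde\Gamma$ meets $E_2$, not $E_1$, after the second blow-up. So $\bar\Gamma'$ is a $(-1)$-curve in the fibre, and contracting it (and then any newly created $(-1)$-curves) leaves $2Z'$. Here $Z'$ is the image of $Z$, of type $I_0$, $I_1$ or $I_n$ respectively. It is a genuine Kodaira fibre with all components of self-intersection $-2$, so no further contraction is possible. Since the fibre is $2Z'$ and $Z'$ has multiplicity-one components, it is a double fibre of type $2I_{n}$.

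The main obstacle is the intersection bookkeeping in the third step. I must verify that contracting $\bar\Gamma'$ raises the self-intersection of the special component of $Z$ from $-1$ (for $n=0,1$), respectively $-3$ (for $n\ge 2$), to exactly $0$, respectively $-2$. This requires $\bar\Gamma'$ to meet $Z$ transversally in one point, and I would confirm it from $\tilde\Gamma\cdot E_2 = 1$ with the intersection point lying in the branch locus. As a consistency check, the canonical bundle formula (Proposition \ref{prop: numerical invariants}) forces $K_S\cdot Z' = 0$ and $Z'^2 = 0$, which matches this count.
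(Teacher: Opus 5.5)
Your argument is correct and shares the paper's skeleton. You take the exceptional curve $Z$ (elliptic, nodal, or a cycle, with $Z^2=-1$) from Example \ref{ex: singularity for double fibre}, observe that the fibre of $\hat S$ over $c_i$ is supported on $Z$ together with the strict transform of $\Gamma_i$, and contract the latter.

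The difference is in how the decisive step is justified. The paper argues by classification. Since the set-theoretic fibre is $Z\cup\hat\Gamma_i$, Kodaira's list leaves only a blown-up $I_{n_i}$ fibre, which forces $\hat\Gamma_i$ to be a $(-1)$-curve. The multiplicity $2$ is then read off from $\Gamma_i\subset B$.

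You instead compute everything directly:
\begin{itemize}
\item from $\Gamma^*=\tilde\Gamma+E_1+2E_2$ you get the fibre $2\bar\Gamma'+2E_1'+2\psi^*E_2$ on the cover;
\item $\bar\Gamma'^2=\tilde\Gamma^2/2=-1$;
\item $\bar\Gamma'$ and $E_1'$ are disjoint because $\tilde\Gamma\cdot E_1=0$;
\item $\bar\Gamma'\cdot\psi^*E_2=\tfrac12\,\psi^*\tilde\Gamma\cdot\psi^*E_2=\tilde\Gamma\cdot E_2=1$.
\end{itemize}
The paper's route is shorter and avoids bookkeeping. However, it leaves implicit why $\hat\Gamma_i$ meets $Z$ exactly once and why nothing else is contracted. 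Yours is self-contained and exhibits the intersection numbers and the multiplicity $2$ explicitly.

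A few small points to tidy up:
\begin{itemize}
\item The phrase ``because it met $E_1'$ transversally only if it met it at all'' is not an argument. The disjointness you give right afterwards is the actual reason.
\item For $n=0,1$ the single component of $Z'$ has square $0$, not $-2$.
\item For $n\geq 2$, say why the curves of the $A_{n-1}$-resolution occur with multiplicity one in the total transform of $\psi^*E_2$. One way is the local form $uv=x^n$, where $\mathrm{div}(x)$ is reduced. Another is Zariski's lemma: a fibre supported on a cycle of $(-2)$-curves is a multiple of the reduced cycle.
\item Note that $\bar\Gamma'$ meets $Z$ away from its node, resp.\ away from the $A_{n-1}$-point, because $\tilde\Gamma\cap\tilde B_0=\emptyset$. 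This is what guarantees that contracting $\bar\Gamma'$ does not change the singularity type of $Z$.
\end{itemize}
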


\begin{proof}  Example \ref{ex: singularity for double fibre} shows 
that over each elliptic singularity, the exceptional divisor $E_i$ in the minimal resolution has self-intersection $-1$ and is either a smooth elliptic curve or a cycle of $n_i$ rational curves. Therefore, the set-theoretic fibre over $c_i$ on $\hat S$ is the union of $E_i$ and the reduced pullback $\hat\Gamma_i$ of the curve $\Gamma_i$. By Kodaira's classification of singular fibres (see Table \ref{tab:fibres}), this can only be the blow-up of a fibre of 
$I_{n_i}$ type and $\hat\Gamma_i$ has to be a $(-1)$ curve. Since $\Gamma_i$ is in the branch locus of $\bar\theta$, the 
scheme-theoretic fibre has multiplicity $2$, so that 
$f^{-1} (c_i)$ is a double fibre of type $2I_{n_i}$.
\end{proof}

As expected, we now prove that Construction \ref{construction deg 2} is roughly the inverse of taking the birational double-cover model.

\begin{prop}\label{prop: construction_equal_double_cover_model}
 In the situation of Construction \ref{construction deg 2}, the map $\bar \theta \colon \bar X \to W$ is the birational double cover model of the marked elliptic surface $\left( f \colon S \to C, \kl = \ko_S(F+ D)\right)$, where the surface $S$ is obtained as in (\ref{main diagram deg 2}), $F=\sum_{i=1}^k F_i$ and $W=\IP(f_{*}\kl)$.
\end{prop}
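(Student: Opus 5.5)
We prove the proposition by running the construction of Theorem \ref{thm: birational double cover model} on the marked surface $(S,\kl=\ko_S(F+D))$ and checking that it returns $W$ and $\bar\theta$. Concretely, we have to show three things: $f_*\kl\cong\ke$ compatibly with $W=\IP(\ke)$; the ample model $X$ of $(S,\kl)$, blown up at the base points $\varphi(q_i)$ and with the strict transforms of the $G_i$ contracted, is $\bar X$; and under this identification the morphism $\bar X\to W$ given by the relative polarisation is $\bar\theta$.

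\textbf{Step 1: the marking on $\hat S$.} Set $\kl':=\psi^*\ko_W(\Sigma)$ on $\hat S$; it has relative degree two. By Lemma \ref{lem: double fibre}, the reduced preimage $\hat\Gamma_i$ of $\Gamma_i$ is a $(-1)$-curve with $\psi^*\Gamma_i=2\hat\Gamma_i+2E_i$, where $E_i$ is the elliptic cycle. Hence $\mu_2$ contracts exactly the curves $\hat\Gamma_i$, and $\mu_2^*F_i=\hat\Gamma_i+E_i$. Since $\Sigma$ passes through $p_i$ and not through any other point of $\Gamma_i$, the divisor $\psi^*\Sigma$ contains $E_i$ with multiplicity one and is otherwise the strict transform of $D$. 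On each $\hat\Gamma_i$ it has degree $1$: the double cover of $\Sigma$ meets $\hat\Gamma_i$ once. This gives
\[
\psi^*\Sigma=\mu_2^{-1}{}_*D+\tsum_i E_i=\mu_2^*(D+F)-\tsum_i\hat\Gamma_i ,
\]
where the multiplicities have to be confirmed by intersecting both sides with $\hat\Gamma_i$ and with the components of $E_i$. In other words, $\hat S$ is $S$ blown up at the points $q_i=\mu_2(\hat\Gamma_i)\in F_i\cap D$, and $\kl'=\mu_2^*\kl(-\hat E)$ with exceptional divisor $\hat E=\sum_i\hat\Gamma_i$. This is exactly the line bundle appearing in Theorem \ref{thm: birational double cover model}(ii). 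The points $q_i$ are the base points of Lemma \ref{lem: simple_base_pts}, since the only divisors of $|\kl|_{F_i}|$ come from $D$ meeting $F_i$ at $q_i$.

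\textbf{Step 2: identifying the bundle.} We use the isomorphism $\hat f_*(\sigma^*\tilde\kl(-\hat E))\cong\ke_1$ established in the proof of Theorem \ref{thm: birational double cover model}; it transfers to $\hat S$ because $\hat S\to\hat X$ only contracts ADE configurations. This gives $f_*\kl\cong(f\mu_2)_*\kl'$. By the projection formula,
\[
(f\mu_2)_*\psi^*\ko_W(\Sigma)=\alpha_*\bigl(\ko_W(\Sigma)\otimes\psi_*\ko_{\hat S}\bigr).
\]
The double cover $\bar\theta$ has $\bar\theta_*\ko_{\bar X}=\ko_W\oplus\alpha^*\kn^\vee(-2\Sigma)$, and rational singularities do not change the pushforward. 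The ADE points are rational. At the elliptic points, $R^1\mu_{1*}\ko$ is nonzero, but $\mu_{1*}\ko_{\hat S}=\ko_{\bar X}$ still holds by normality. So only the $R^0$ term matters, and $\psi_*\ko_{\hat S}=\bar\theta_*\ko_{\bar X}$. The anti-invariant summand contributes $\alpha_*(\alpha^*\kn^\vee(-\Sigma))=0$, which gives
\[
f_*\kl\cong\alpha_*\ko_W(\Sigma)\cong\ke .
\]
Under this isomorphism the evaluation map is pulled back from the tautological surjection on $W$, so the induced morphism $\hat\theta$ coincides with $\psi$ after identifying $\IP(f_*\kl)=W$.

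\textbf{Step 3: Stein factorisation.} By construction, the Stein factorisation of $\psi$ is $\hat S\to\bar X\to W$, because $\bar X$ is normal and $\bar\theta$ is finite. We then compare this with $\hat X$. Contracting the $(-2)$-configurations on which $\kl'$ has degree zero, away from $E_i$, yields $\sigma\colon\hat X\to X$, where $X=\Proj\kr(f,\kl)$. This requires $\kl$ to be $f$-nef, which holds since $D$ is a bisection and $F$ is numerically trivial on fibres. By uniqueness of the Stein factorisation, $\bar X$ together with $\bar\theta$ is the birational double cover model.

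\textbf{Main obstacle.} The main obstacle is Step 1: checking carefully that $\psi^*\Sigma$ contains each $E_i$ with multiplicity exactly one and each $\hat\Gamma_i$ with multiplicity zero. I would settle this with a local computation at $p_i$ using Example \ref{ex: singularity for double fibre}. Since $\Sigma$ is transverse to $\Gamma_i$ and is not tangent to $B_0$ beyond the $A$-singularity, it should separate from $B_0$ at the second blow-up, meeting $E_2$ in a point off $\tilde B$.
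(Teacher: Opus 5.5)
Your overall strategy matches the paper's. Everything reduces to the identity $\psi^*\ko_W(\Sigma)\cong\mu_2^*\kl(-\tsum_i\hat\Gamma_i)$ on $\hat S$, which the paper writes as $\hat\theta^*\Sigma=\hat D+\hat G=\mu_2^*(D+F)-\hat E$. Your Steps 2 and 3 usefully make explicit what the paper leaves implicit.

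Step 1, however, which you rightly call the crux, rests on a false claim: the points $q_i=\mu_2(\hat\Gamma_i)$ do \emph{not} lie on $D$. The bundle $\kl|_{F_i}\cong\ko_{F_i}(D)\otimes\ko_{F_i}(F_i)$ has degree one. Since $\ko_{F_i}(F_i)$ is a non-trivial $2$-torsion bundle, the zero of its unique section is different from the point $D\cap F_i$; this is exactly the first observation in the paper's proof. Your displayed identity $\psi^*\Sigma=\mu_2^{-1}{}_*D+\tsum_iE_i=\mu_2^*(D+F)-\tsum_i\hat\Gamma_i$ is correct, but only because $q_i\notin D$ gives $\mu_2^*D=\mu_2^{-1}{}_*D$. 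If $q_i$ lay on $D$, the right-hand side would acquire an extra $\tsum_i\hat\Gamma_i$.

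For the same reason $\hat D:=\mu_2^{-1}{}_*D$ is disjoint from $\hat\Gamma_i$. The degree $\psi^*\Sigma\cdot\hat\Gamma_i=1$ comes from $E_i\cdot\hat\Gamma_i=1$, not from the preimage of $\Sigma$ meeting $\hat\Gamma_i$. Your justification that the $q_i$ are the base points (``the only divisors of $|\kl|_{F_i}|$ come from $D$'') must therefore be replaced. One way: restrict $\mu_2^*\kl\cong\psi^*\ko_W(\Sigma)\otimes\ko_{\hat S}(\tsum_i\hat\Gamma_i)$ to $E_i\cong F_i$. The first factor is trivial there because $\psi$ contracts $E_i$, so $\kl|_{F_i}\cong\ko_{F_i}(\mu_2(\hat\Gamma_i))$, and $\mu_2(\hat\Gamma_i)$ is the base point of Lemma~\ref{lem: simple_base_pts}. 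Step 2 needs this to invoke $\hat f_*(\sigma^*\tilde\kl(-\hat E))\cong\ke_1$.

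The local computation you plan for the multiplicities is also set up wrongly. Since $(B_0\cdot\Gamma_i)_{p_i}=4$ at an $A_{n_i+3}$ point, $\Gamma_i$ must be tangent to the tangent line of $B_0$ at $p_i$. The section $\Sigma$, by contrast, is transverse to the fibre $\Gamma_i$. So in Example~\ref{ex: singularity for double fibre} the strict transform of $\Sigma$ already leaves $B$ after the \emph{first} blow-up: it meets $E_1$ away from the point $p'$ blown up next, and never meets $E_2$. Its total transform is therefore $\tilde\Sigma+E_1'+E_2$, with $E_1'$ the strict transform of $E_1$. On the double cover this becomes the preimage of $\tilde\Sigma$ plus $2R_1$ plus the preimage of $E_2$, where $R_1$ is the $(-1)$-curve over $E_1'$. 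Contracting $R_1$ gives $\psi^*\Sigma=\hat D+E_i$. Here $\hat D$ meets $E_i$ at the image of $R_1$, which differs from $E_i\cap\hat\Gamma_i$; the latter lies over $\tilde\Gamma_i\cap E_2$. Had $\Sigma$ passed through $p'$, as you predict, $E_2$ would have coefficient two, and so would $E_i$ in $\psi^*\Sigma$.

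In fact the intersection check you mention settles the point without any blow-ups. Let $m\geq 1$ be the coefficient in $\psi^*\Sigma$ of the component of $E_i$ meeting $\hat\Gamma_i$; it is positive because $p_i\in\Sigma$. Then $\hat D\cdot\hat\Gamma_i+m=\psi^*\Sigma\cdot\hat\Gamma_i=\tfrac12\,\psi^*\Sigma\cdot\psi^*\Gamma_i-\psi^*\Sigma\cdot E_i=1$. This forces $m=1$ and $\hat D\cap\hat\Gamma_i=\emptyset$, which directly contradicts $q_i\in D$. With Step 1 corrected in this way, your Steps 2 and 3 go through.
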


\begin{proof}
By Lemma \ref{lem: simple_base_pts}, each relative linear system $|\mathcal{L}|_{F_i}|$ has a unique, simple base point $q_i \in F_i$. Consider the ample model $X = \text{Proj}_C \mathcal{R}(f, \mathcal{L})$ and its blow-up $\sigma \colon \hat{X} \to X$ at the points $\varphi(q_i)$. Because each $\ko_{F_i}(F_i)$ is a non-trivial 2-torsion line bundle, the points $q_i$ do not lie in the bisection $D$. So $\sigma\colon  \hat X \to \bar X$ blows up $k$ points not lying on $\varphi(D)$. With notations as in Theorem \ref{thm: birational double cover model}, denote the strict transform of $\varphi(D)$ on $\hat X$ and $\bar X$ with $\hat D$ and $\bar D$, respectively. Then the only thing to verify is that under the induced birational morphism $\hat X \overset{\rho}{\to} \hat S$ we have an identification
 \[ 
 \hat \theta^* \ko_{W}(1) = \hat \theta ^* \Sigma = \bar \sigma^* \bar D  = \hat D + \hat G = \left( \hat D + \hat G + \hat E\right) - \hat E \simeq \mu_2^*\left(D + F\right) - \hat E,
 \]
or, equivalently, $\sigma^* \mathcal{L}(-\hat{E}) \cong \mathcal{O}_{\hat{S}}(\hat{D} + \hat{G})$. Indeed, this shows that the map $\hat \theta$ in Theorem \ref{thm: birational double cover model} coincides with $\psi\circ \rho$, where $\psi$ is as in Construction \ref{construction deg 2}. Here, $\hat G$ is as in the proof of Lemma \ref{lem: can bundle birat model}.
\end{proof}

Next, we express the invariants of the bisection in terms of the construction data.

\begin{lem}\label{lem: invariants bisection in birational model}
 In Construction \ref{construction deg 2} we have 
\[ D^2 = 2\deg(\det \ke_1) - k \text{ and } p_a(D) = 2 g(C)  + 2 \deg ( \det \ke_1) + \deg \kn -k-1. \]
\end{lem}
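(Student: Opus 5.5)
The plan is to compute $D^2$ by pulling the section $\Sigma$ back through the diagram of Theorem~\ref{thm: birational double cover model}, and then to get $p_a(D)$ from adjunction on $S$ using the canonical bundle formula in the form of Lemma~\ref{lem: can bundle birat model}. By Proposition~\ref{prop: construction_equal_double_cover_model} we may identify $\ke=\ke_1=f_*\kl$ with $\kl=\ko_S(F+D)$, and we may use the notation $\hat X,\bar X,\hat E,\hat G,\hat D$ from there.

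\textbf{Step 1: $\Sigma^2$.} The section $\Sigma$ comes from $0\to\ko_C\to\ke\to\det\ke\to0$. With the convention under which $\ko_W(1)=\ko_W(\Sigma)$, as used in Proposition~\ref{prop: construction_equal_double_cover_model}, we have $\Sigma^2=\deg\ko_W(1)|_\Sigma=\deg(\det\ke)$. This is the step where I expect the most care to be needed: one has to fix the convention for $\IP(\ke)$ (quotients versus lines) so that $\Sigma$ is the section with normal bundle $\det\ke$ rather than $(\det\ke)^{-1}$. The sign in the claimed formula dictates which convention is the right one.

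\textbf{Step 2: $D^2$.} Since $\Sigma\not\subset B$, the pullback $\hat\theta^*\Sigma$ has self-intersection $2\Sigma^2$. The proof of Proposition~\ref{prop: construction_equal_double_cover_model} gives $\hat\theta^*\Sigma=\hat D+\hat G$. Each $\hat G_i$ is contracted by $\hat\theta$, so $\hat\theta^*\Sigma\cdot\hat G_i=0$, and $\hat G_i^2=-1$. Hence
\[
\hat D^2=(\hat\theta^*\Sigma-\hat G)^2=2\Sigma^2+\hat G^2=2\deg(\det\ke_1)-k .
\]
The blown-up points $\varphi(q_i)$ do not lie on $\varphi(D)$, and the remaining map $X\to S$ only involves curves in fibres on which $\kl$ has degree zero. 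I still need to check that $D$ avoids the exceptional locus of $S\to X$, or else argue directly on $\hat S$, where $\mu_2$ contracts only the curves $\hat\Gamma_i$, which are disjoint from $\hat D$. Either way, $D^2=\hat D^2$.

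\textbf{Step 3: $p_a(D)$.} Use $2p_a(D)-2=D^2+K_S\cdot D$. By Proposition~\ref{prop: numerical invariants}, whose formulas hold equally on $S$, we have $K_S=f^*(\omega_C\otimes f_*\omega_{S/C})+\sum_i F_i$. Lemma~\ref{lem: can bundle birat model} gives $\deg f_*\omega_{S/C}=\deg\det\ke_1+\deg\kn-k$. Since $D$ is a bisection and $2F_i$ is a fibre, $F_i\cdot D=1$. Therefore
\[
K_S\cdot D=2(2g(C)-2)+2(\deg\det\ke_1+\deg\kn-k)+k .
\]
Adding $D^2$ and dividing by two yields $p_a(D)=2g(C)+2\deg(\det\ke_1)+\deg\kn-k-1$, as claimed.
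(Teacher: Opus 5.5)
Your proposal is correct and follows essentially the same route as the paper: compute $(\bar\theta^*\Sigma)^2=2\Sigma^2=2\deg(\det\ke_1)$, subtract $k$ for the $k$ exceptional $(-1)$-curves over the elliptic singularities, and then use adjunction together with the canonical bundle formula and Lemma~\ref{lem: can bundle birat model}. Your bookkeeping $\hat D=\hat\theta^*\Sigma-\hat G$ subtracts the contracted elliptic curves, which are orthogonal to the pullback of $\Sigma$, and is in fact the accurate form of the paper's display; that display writes the $\mu_2$-exceptional curves $\hat\Gamma_i$ there instead, although these meet $\mu_1^*\bar D$ with intersection number one. The check you flag (where the map should read $\varphi\colon S\to X$) is immediate, because $\deg \kl|_C = D\cdot C$ for every vertical curve $C$, so $D$ misses all curves contracted by $\varphi$.
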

\begin{proof}
 Note that since $\mu_2$ blows up points away from $D$ and the exceptional curves satisfy $\hat{\Gamma}_i^2 = -1$, we can relate the pullbacks on the smooth surface $\hat{S}$ by:
\[ 
D^2 = (\mu_2^*D)^2 = \left( \mu_1^* \bar D - \tsum_{i=1}^k \hat \Gamma_i\right)^2 = (\mu_1^* \bar D)^2 + \tsum_{i=1}^k \hat \Gamma_i^2 = \bar D^2 - k.
\]
By the theory of projective bundles \cite[Appendix A]{LazarsfeldI} and since $\deg \bar \theta = 2$, we obtain 
\[ 
\bar D^2 = (\bar \theta^* \Sigma)^2 = 2 (\Sigma^2) = 2 \deg(\det\ke_1),
\]
which gives the claimed formula for the self-intersection.

To compute the arithmetic genus, we substitute the above formula, the result of Lemma \ref{lem: can bundle birat model} and the canonical 
bundle formula \eqref{eq: can bundle formula} into the adjunction formula on 
the smooth surface $S$ to get 
\begin{align*}
 2p_a(D) - 2 & = \left( K_S +D\right) .D\\
 &= f^*\left(K_C+ f_*K_{S/C}\right).D +  \left(\tsum_{i=1}^k F_i\right). 
D + D^2 \\
&= 2\left( 2g(C) -2 \right) 
+ 2 \deg  \left(\det \ke_1\tensor \kn \left(-\tsum_i c_i\right)\right) 
+ k +2\deg(\det\ke_1) -k\\
&=2(2g(C) -2) + 4 \deg ( \det \ke_1) -2k + 2 \deg\kn \\
& = 2\left (2 g(C)  + 2 \deg ( \det \ke_1) + \deg \kn -k-2 \right) 
 \end{align*}
 resulting in the above formula.
\end{proof}

\begin{rem}\label{rem: numbers for deg 2}
In applications where we want to construct specific elliptic fibrations, we are typically given the invariants of $S$, the number $k$ of multiple fibres, and the self-intersection and genus of the bisection $D$. The formulas from Lemmas \ref{lem: can bundle birat model} and \ref{lem: invariants bisection in birational model} allow us to determine these completely. We spell this out in the case where $q(S) = 0$, which implies that $f_*\omega_{S/C}$ is non-trivial and that the base curve is $C = \mathbb{P}^1$.

Setting
\[ 
a = \frac{D^2 +k}{2} \quad \text{and} \quad b = p_g(S) + 1 + k - a,
\]
we find that $\det \ke_1 = \ko_{\mathbb{P}^1}\left(a\right)$ and $\kn = \ko_{\mathbb{P}^1}\left(b\right)$.

Now, while the vector bundle $\ke_1$ decomposes as a direct sum of line bundles, the short exact sequence in Construction \ref{construction deg 2} $(ii)$ splits if and only if the extension class in $H^1(\mathbb{P}^1, \ko_{\mathbb{P}^1}(-a))$ vanishes, for example whenever $a \leq 1$. When the sequence splits, we can write $\ke_1 \cong \ko_{\mathbb{P}^1} \oplus \ko_{\mathbb{P}^1}(-a)$, and $W = \mathbb{P}(\ke_1)$ is the Hirzebruch surface $\mathbb{F}_a$, where the section $\Sigma$ corresponds to the unique curve with self-intersection $-a$. In particular, the surface $S$ can be realised birationally as a hypersurface of bidegree $(2b, 4)$ in the toric threefold considered in Example \ref{example: toric threefold} with $d_1=0$, $d_2=a$ and $d_3=b$. That is, birationally, $S$ is described by a global equation 
\begin{equation}\label{eq:global_toric_hypersurface_S}
y^2 = F_{4}(t_0, t_1, x_0, x_1),
\end{equation}
where $F_4$ is a polynomial of degree $4$ in the fibre variables $x_0, x_1$ and with coefficients in $\mathbb{C}[t_0, t_1]$. Note that the canonical bundle on the smooth surface $S$ then is given by $K_S = f^*\ko_{\mathbb{P}^1}(a+b-k- 2) + \sum_{i=1}^k F_i$.
\end{rem}

\section{The local description of the ample model}\label{sect: local models}
In preparation for the global determinantal model, we study the ample model $X$ of a marked elliptic surface $(f\colon S \to C, \kl)$ near a fixed fibre, which amounts to describing the localisation of the algebra $\kr(f,\kl)$ at a point of $C$. Since this question is local, for the remainder of this section we assume that $C$ is the spectrum of a DVR with closed point $c$ and generic point $\eta$, and that $f$ is non-isotrivial. Alternatively, in the analytic category, we restrict the fibration to a sufficiently small Euclidean neighbourhood of $c$. We denote the reduction of the central fibre $S_c\coloneqq f^{-1}(c)$ by $F$. There are two cases to consider. If $S_c$ is non-multiple, the description of $X$ is well-known, and we summarise it in Section \ref{sec: non-multiple fibres}. If $S_c$ is a double fibre, explicitly describing the ample model is more involved. This case was first considered by the second-named author in \cite{ThesisAnna} and is the content of Proposition \ref{prop: alg model double fibre} in Section \ref{sec:double-fibres}. Note that on a marked elliptic surface of degree two, every multiple fibre has multiplicity two.

\subsection{Non-multiple fibre}\label{sec: non-multiple fibres}

\begin{prop}\label{prop:types_sing_fibres}
 Assume that $S_c$ is not a multiple fibre. Then, as already stated, the ample model $X$ is a double cover of $W=\IP(\ke_1)$. Moreover, the branch curve $B$ encodes the Kodaira type of $S_c$ as in \cite[Table 2]{AB}.
 \end{prop}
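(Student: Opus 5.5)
The plan is to show directly that, near a non-multiple fibre, the relative section ring $\kr(f,\kl)$ is generated in degrees one and two, with a single relation in degree four. This makes $X$ a hypersurface $y^2 = F_4(x_0,x_1)$ in a weighted $\IP(1,1,2)$-bundle over the local base $C$, and hence an honest (finite) double cover of $W=\IP(\ke_1)$.

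We work over the spectrum of a DVR, so all the $\ke_n$ are free. By Lemma \ref{lem: linear systems on Kodaira fibres}, $\kl_c$ restricted to $S_c$ has $h^0=2$ and $h^1=0$, and $|\kl_c|$ is base point free. Consequently $\ke_1$ is free of rank two, with basis $x_0,x_1$ restricting to a basis of $H^0(S_c,\kl_c)$. Because $\kl_c$ has degree two on a curve of arithmetic genus one with $h^1(\kl_c^{n})=0$, cohomology and base change gives $\ke_n\otimes k(c)\isom H^0(S_c,\kl_c^{\otimes n})$, of dimension $2n$.

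Next we work on the fibre $S_c$. Here $\Sym^2 H^0(\kl_c)$ has dimension three and maps to the four-dimensional $H^0(\kl_c^2)$. Since the map defined by $|\kl_c|$ factors through the degree-two map $F\to\bar F\to\IP^1$, this multiplication map is injective, so we can pick one additional generator $y\in\ke_2$. We choose it anti-invariant under the involution $\tau$, which is possible because $\ke_2=\ke_2^+\oplus\ke_2^-$. Then $x_0,x_1,y$ generate the fibre ring $\bigoplus_n H^0(S_c,\kl_c^n)$. This is the standard fact that a degree-two polarisation on a Kodaira fibre, possibly after contracting the degree-zero components, gives a quartic in $\IP(1,1,2)$; it follows by a dimension count, since the degree-$n$ monomials $x^{a}y^{b}$ with $a+2b=n$ and $b\le1$ span a space of dimension $2n$. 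The generators lift to the relative ring by Nakayama's lemma applied degree by degree.

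In degree four, $y^2$ lies in the span of the monomials of degree four, and $y^2$ is $\tau$-invariant. Hence $y^2 = F_4(x_0,x_1)+ y\,G_2(x_0,x_1)$ with $G_2$ anti-invariant, which forces $G_2=0$. Here $F_4$ has coefficients in the DVR. Rank counting shows that this is the only relation, so $X=\{y^2=F_4\}$. The projection to $(x_0:x_1)$ is then a double cover of $W$ branched along $B=\{F_4=0\}$, which proves the first claim.

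For the identification of the Kodaira type with the branch curve, we compare singularities. $X$ has only canonical (ADE) singularities, namely the contractions of the components of $S_c$ on which $\kl$ has degree zero, and the double cover's singularities are determined by those of $B$ (Section \ref{sect:double covers}). We run Construction \ref{construction: equivariant resolution} through the possible local types of $B$ near $\Gamma=\alpha^{-1}(c)$, namely how $B$ meets $\Gamma$ and which ADE singularities it has there, and read off the resulting resolved fibre configuration. This reproduces the list in \cite[Table 2]{AB}, and we would largely cite that table. The main obstacle is making this casework complete: in particular, non-reduced branch behaviour and the cases where $B$ contains $\Gamma$ must be handled by the appropriate normalisation, which requires checking that the relative minimality of $f$ excludes the spurious cases.
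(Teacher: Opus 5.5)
Your proposal is correct. For the second claim you do what the paper does. The singularities of $X$ are the ADE points obtained by contracting the components of $S_c$ of $\kl$-degree zero. The dictionary with the Kodaira type comes from running Construction \ref{construction: equivariant resolution} backwards, with the casework taken from \cite[Table 2]{AB}. So the obstacle you flag is outsourced in the paper as well.

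The difference is in the first claim. The paper does not reprove it but refers back (``as already stated'') to the double cover $\theta$ of Section \ref{sect:marked_and_ample}. Near a non-multiple fibre, Lemma \ref{lem: linear systems on Kodaira fibres} makes $\theta$ a morphism, and it is finite of degree two because $\theta^*\ko_W(1)=\tilde\kl$ is relatively ample. You instead compute the relative section ring and prove the stronger statement $X=\{y^2=F_4(x_0,x_1)\}\subset C\times\IP(1,1,2)$. This is longer, but it buys two things:
\begin{itemize}
\item it justifies the hypersurface description that the paper only asserts in the remark after the proposition and then uses in Theorem \ref{thm:embed_no_double};
\item it exhibits $B=\{F_4=0\}$ explicitly. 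This $B$ is reduced because $X$ is normal, and it contains $\Gamma$ exactly when a uniformiser divides $F_4$.
\end{itemize}

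Two steps in your sketch need an actual argument rather than a dimension count.
\begin{itemize}
\item The linear independence of the $2n$ monomials $x^ay^b$ with $b\le 1$ in $H^0(S_c,\kl_c^{\tensor n})$.
\item The deduction $G_2=0$. Note that it is $yG_2$, not $G_2$, that is anti-invariant, so you need $yG_2=0\Rightarrow G_2=0$.
\end{itemize}
Both are delicate when the contracted central fibre $X_c$ is non-reduced. An example is $I_0^*$ with $\kl$ of degree one on the double component, where $y$ vanishes on $(X_c)_{\mathrm{red}}$. A clean fix is to note that the projection $X_c\to\IP^1$ is finite and flat of degree two. Its pushforward of $\ko_{X_c}$ is then $\ko_{\IP^1}\oplus\ko_{\IP^1}(-2)$, since $\chi(\ko_{X_c})=0$. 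This gives $H^0(X_c,\tilde\kl_c^{\tensor n})\isom H^0(\ko_{\IP^1}(n))\oplus y\cdot H^0(\ko_{\IP^1}(n-2))$ directly.
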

 
\begin{proof}
 The surface $S$ is the relatively minimal model of a minimal resolution of $X$. The singularities of $X$ arise from contracting the fibre components of $S$ where the restriction of $\kl$ has degree zero. In our local setup, because $S_{\eta}$ is assumed smooth, any such singularities must lie over the special point $c$. By reversing Construction \ref{construction: equivariant resolution}, it is routine to establish a dictionary relating the Kodaira type of $S_c$ with the singularities of $B$ and the intersection of $B$ with the fibre $\Gamma$ of $W\to C$ over $c$. This is precisely the content of \cite[Table 2]{AB}. For the reader's convenience, we reproduce this data in Tables \ref{tab:types_fibres_1} and \ref{tab:types_fibres_2}.
\end{proof}

\begin{rem}
    Note that in this case we can equivalently interpret $X$ as a relative hypersurface of degree four in $\IP(1,1,2)\times C\to C$.
\end{rem}

\begin{table}[h!]
		\caption{Types of singular fibres when $\Gamma \not < B$}
		\label{tab:types_fibres_1}
		\begin{tabular}{lcc}
			\toprule
            $\Gamma \cap B$ & sing. of $B$ & Kodaira type of $S_c$ \\
            \midrule
            $b_1+b_2+b_3+b_4$ & none & $I_0$ \\
            $b_1+b_2+2b_3$ & $A_{k-1}, k\geq 1$ & $I_{k}$ \\
            $2b_1+2b_2$ & $A_{l-1}+A_{m-1}$, $l,m, \geq 1$ & $I_{l+m
            }$\\
            $b_1+3b_2$ & $A_{k}, k=0,1,2$ & $II\,(k=0)$, $III\,(k=1)$ or $IV\,(k=2)$ \\
             & $D_{k+4}$ ($k\geq 0$) & $I_{k}^*$ \\
             & $E_k$ ($k=6,7,8$) & $IV^*\,(k=6)$, $III^*\,(k=7)$ or $II^*\,(k=8)$ \\
            $4b_1$ & $A_k, k=0,1$ & $III\, (k=0)$ or $IV\,(k=1)$ \\
            & $A_{k+3}$ $(k\geq 0)$ & $I_{k}^*$ \\
            & $D_{k+3}, k\geq 1, k\neq 2$ & $I_{k}^*$ \\
            & $D_5$ & $I_2^*$ ($\Gamma\cap B^{sing}=\emptyset$) or $IV^*$\\
            & $E_6$ & $III^*$ \\
            \bottomrule
            \end{tabular}
            \end{table}

            \begin{table}
		\caption{Types of singular fibres when  $\Gamma  < B$}
		\label{tab:types_fibres_2}
		\begin{tabular}{lcc}
			\toprule
            $\Gamma \cap (\overline{B\backslash \Gamma})$ & sing. of $\overline{B\backslash \Gamma}$ & Kodaira type of $S_c$ \\
            \midrule
            $b_1+b_2+b_3+b_4$ & none & $I_0^*$ \\
            $b_1+b_2+2b_3$ & $A_{k-1}, k\geq 1$ & $I_{k}^*$ \\
            $2b_1+2b_2$ & $A_{l-1}+A_{m-1}$, $l,m, \geq 1$ & $I_{l+m
            }^*$\\
            $b_1+3b_2$ & $A_k, k=0,1,2$ & $IV^*\,(k=0)$, $III^*\,(k=1)$ or $II^*\,(k=2)$ \\
            $4b_1$ & $A_k, k=0,1$ & $III^*\,(k=0)$ or $II^*\,(k=1)$ \\
            \bottomrule
            \end{tabular}
            \end{table}

\subsection{Double fibre}\label{sec:double-fibres}

We now assume that $S_c=2F$ is a double fibre. We first describe the section ring associated with the restriction $\kl_c = \kl|_{S_c}$:
\begin{lem}\label{lem: section ring double fibre}
	We have an isomorphism $R(S_c, \kl_c) \isom  \IC[x_0, x_1, y_0, y_1, z]/I$, where the variables $x_0, x_1, y_0, y_1$ and $z$ have degrees $1,1,2,2$ and $3$, respectively, and $I$ is the ideal generated by the relations
	\[
    \rk \mat{ 0  & x_1 & y_1\\ x_1 & y_0 & z \\ y_1 & z & f_4 } \leq 1,
    \]
	for some polynomial $f_4(x_0, x_1, y_0)$ of weighted degree four that is not divisible by $y_0$ or $x_0^2$ modulo $x_1$.	Moreover,
    \begin{enumerate}
        \item the reduced curve $F$ is smooth if and only if $f_4$ is not a square modulo $x_1, y_1$, and
        \item the involution on $X$ acts as
	$ (x_0, x_1, y_0, y_1, z) \mapsto (x_0, x_1, y_0, -y_1, -z)$.
    \end{enumerate}
\end{lem}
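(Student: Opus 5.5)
We compute the section ring of $\kl_c$ on the non-reduced curve $S_c = 2F$ degree by degree. By Lemma \ref{lem:torsion}, $\ko_{S_c}\isom \ko_F\oplus \ko_F(-F)$ as an $\ko_F$-module, and $\eta\coloneqq \ko_F(-F)$ is a non-trivial $2$-torsion line bundle on $F$. Hence, writing $L\coloneqq \kl|_F$, a line bundle of degree one on $F$ (since $\deg \kl_c = 2$ and $S_c=2F$),
\[
H^0(S_c,\kl_c^{\tensor n}) \isom H^0(F, L^n)\oplus H^0(F, L^n\tensor\eta).
\]
The multiplication respects this splitting: the summand $\eta\cdot\eta$ maps to zero, since it lands in $\ko_F(-2F)$, which is zero in $\ko_{S_c}$. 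So the ring is $R(F,L)\oplus M$, where $M=\bigoplus_n H^0(F,L^n\tensor\eta)$ is an $R(F,L)$-module with $M\cdot M=0$. Both pieces have dimension $n$ in degree $n\geq 1$ by Riemann--Roch on the Gorenstein genus-one curve $F$ (of type $I_k$), which is consistent with $\ke_n$ having rank $2n$. In degree $1$ the two summands are $H^0(L)=\langle x_0\rangle$, whose zero is the base point $q$ of Lemma \ref{lem: simple_base_pts}, and $H^0(L\tensor\eta)=\langle x_1\rangle$.

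The first step is to recall the classical description $R(F,L)=\IC[x_0,y_0,z_0]/(z_0^2-g_6)$, with generators in degrees $1,2,3$ (Weierstrass form). The second step is to treat the module $M$. It is a maximal Cohen--Macaulay module of rank one over $R(F,L)$, corresponding to the sheaf $\eta$, and it has generators $x_1$ in degree $1$ and $y_1$ in degree $2$. To see that $y_1$ is needed, note that $x_0x_1$ spans only a line in the $2$-dimensional space $H^0(L^2\tensor\eta)$. One should also check that $z$, of degree $3$, is the remaining generator. The key observations are that $x_1^2\in H^0(L^2)$ is a new even generator $y_0$ up to $x_0^2$, because $\eta^2=\ko$ and $x_1$ does not vanish at $q$, and that $x_1y_1$ and $y_1^2$ (computed in $R(F,L)$ after identifying $\eta^2\isom\ko$) are even elements.

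At this point a change of viewpoint makes the relations transparent. The pair $(x_1,y_1)$ presents $\eta$, and the $2\times 2$ minors of a symmetric $3\times 3$ matrix are exactly the relations of the form $x_1^2=\ldots$, $x_1 y_1=\ldots$ and $y_1^2=\ldots$. So the third step is to normalise coordinates. I would set the entry $0$ by exploiting that the product $x_1\cdot x_1$ lies in the nilpotent part of the ring (the product $M\cdot M$ vanishes on $S_c$, so the relevant products are formal), and then choose $y_0,z$ to absorb the remaining terms. Finally I would count Hilbert functions to show that the six (really five independent) minors generate $I$: the quotient should have Hilbert series matching $1+\sum_{n\geq1}2n\,t^n$. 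The conditions on $f_4$ (not divisible by $y_0$, nor by $x_0^2$ modulo $x_1$) are exactly what is needed to avoid extra components, i.e.\ to get the correct dimension.

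For the remaining claims: the involution is $+1$ on $R(F,L)$ and $-1$ on $M$, which gives (ii) once the normalised generators are sorted by parity. For (i), setting $x_1=y_1=0$ recovers the curve $F$ as $z^2=f_4$ in $\IP(1,2,3)$, up to the reduced structure. This curve is smooth exactly when the associated cubic has distinct roots, i.e.\ when $f_4$ is not a square; when $f_4$ is a square one gets the nodal $I_k$ case after contraction. The main obstacle I expect is the normalisation step, namely making the parity assignment and the vanishing entry compatible with the ring structure. There I would first work on $F$ explicitly via $\eta$ as a $2$-torsion point, and then verify the Hilbert series computation.
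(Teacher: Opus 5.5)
Your first structural step does not hold in general. The splitting $H^0(S_c,\kl_c^{\tensor n})\isom H^0(F,L^n)\oplus H^0(F,L^n\tensor\eta)$ from Lemma~\ref{lem:torsion} is only additive, so $R(S_c,\kl_c)$ need not be the trivial square-zero extension of $R(F,L)$ by $M$. A graded ring section of $R\to R(F,L)$ would give a retraction $2F\to F$.
\begin{itemize}
\item For smooth $F$ such a retraction does exist, since the obstruction lies in $H^1(F,T_F\tensor\eta)=H^1(F,\eta)=0$. You would still have to prove this.
\item For a double fibre of type $2I_1$ it fails. At the node, $2F$ is locally $\{(ab)^2=0\}$ in a smooth surface, of embedding dimension $2$, whereas $\ko_F\oplus\eta$ has embedding dimension $3$.
\end{itemize}
So $z^2=y_0f_4$ does not come for free, and this is why $f_4$ must be allowed to involve $x_1$. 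Arbitrary lifts only give $z^2=y_0\tilde f_4(x_0,y_0)+x_0x_1\tilde g_4+y_1\tilde h_4$. The paper removes $\tilde h_4$ by parity, and removes the $y_0$-free part of the middle term by replacing $y_0$ with $y_0+\tfrac ba x_0x_1$, which uses $a=\tilde f_4(1,0)\neq 0$.

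Your ``key observation'' also clashes with the ring. In $R(S_c,\kl_c)$ one has $x_1^2=x_1y_1=y_1^2=0$ (this is the $0$ entry of the matrix), so $y_0$ cannot be $x_1^2$. What your formal products under $\eta^{\tensor 2}\isom\ko_F$ do capture correctly are divisors: $2q_2$, $q_2+q_3+q_4$ and $2q_3+2q_4$, where $q_2$ is the zero of $x_1$, i.e.\ $\eta\isom\ko_F(q_2-q_1)$. Made precise, this is the missing idea:
\begin{itemize}
\item \emph{choose} $\bar y_0\in H^0(L^2)$ with divisor $2q_2$;
\item put $y_1:=x_1\bar z/\bar y_0$, which is regular with divisor $q_3+q_4$;
\item then $x_1z=y_0y_1$ and $y_1z=x_1f_4$ follow from this definition and $\bar z^2=\bar y_0\tilde f_4$, because $R$ acts on $M$ through $R(F,L)$.
\end{itemize}
This is the special-fibre shadow of $Y_0=X_1^2/T$, $Z=X_1Y_1/T$, $Y_1^2=TF_4$ in Proposition~\ref{prop: alg model double fibre}.

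Part (ii) does not follow from your parity rule either. The involution in the statement is the fibrewise involution induced by $\kl$, not the deck involution of the \'etale double cover defined by $\eta$.
\begin{itemize}
\item It is trivial in degree one (as $\ke_1=\ke_1^+$), so $x_1\in M$ is invariant.
\item On $F$ it is the involution of $F\to\IP^1$ given by $|L^2|$, so $z$ is anti-invariant.
\end{itemize}
Thus ``$+1$ on $R(F,L)$ and $-1$ on $M$'' contradicts (ii). The correct bookkeeping is that $z$ is anti-invariant by choice, hence $y_1=x_1z/y_0$ is anti-invariant. This is precisely what forces $\tilde h_4=0$ above.

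Finally, in (i), setting $x_1=y_1=0$ gives $z^2=y_0f_4(x_0,0,y_0)$ in $\IP(1,2,3)$, not $z^2=f_4$. Smoothness means the binary cubic $y_0\tilde f_4$ in $(x_0^2,y_0)$ has three distinct roots, none at $x_0=0$. The non-divisibility conditions hold because $\tilde f_4(\bar x_0,\bar y_0)$ has divisor $2q_3+2q_4$ and $q_1,q_2\notin\{q_3,q_4\}$, not because of a dimension count. Given those conditions, smoothness reduces to $\tilde f_4$ not being a square.
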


\begin{proof}
We first observe that we may assume $F$ is irreducible. Indeed, if $F$ is a fibre of type $I_k$ with $k\geq 2$, we can pass to the ample model by contracting all but one of the rational components in the cycle. The corresponding section ring on $X_c$ then pulls back under $\varphi$ to the desired section ring $R \coloneq R(S_c, \kl_c) = R(2F, \kl|_{2F})$. 

Since the divisor $F$ is stable under the involution $\tau$, the functoriality of the restriction of equivariant sheaves to $\tau$-stable subschemes ensures that $\tau$ induces an involution on the graded ring $R$ and a compatible one on $R(F, \mathcal{L}|_F)$. We will use these induced actions as an organising principle. From the structure of the general fibre, we infer that the Hilbert series of the invariant subring $R^+$ is 
\[
(1-t)^{-2} = 1 + 2t + 3t^2 + \cdots,
\]
while the Hilbert series of $R$ is 
\[
(1-t^4)(1-t)^{-2}(1-t^2)^{-1} = 1 + 2t + 4t^2 + \cdots.
\]
In particular, up

Next, recall from Lemma \ref{lem:torsion} that we have a decomposition  $\ko_{2F} \cong \ko_F \oplus \ko_F(-F)$. In particular, twisting by $\kl^{\tensor n}$ yields a short exact sequence for each $n > 0$:
\[
0 \to H^0(F, \kl^{\tensor n}|_F - F) \to H^0(2F, \kl^{\tensor n}|_{2F}) \to H^0(F, \kl^{\tensor n}|_F) \to 0,
\]
which is equivariant with respect to the involution $\tau$.

		\begin{table}
		\caption{Generators in low degree for $R=R(2F, \mathcal{L}|_{2F})$}
		\label{tab: low deg}
		\begin{tabular}{lcc}
			\toprule
			line bundle & section & divisor\\
			\midrule 
			$\kl|_F$& $\bar x_0$ & $q_1$\\
			$\kl^{\tensor 2}|_F$& $\bar y_0$ & $2q_2$\\
			$\kl^{\tensor 3}|_F$& $\bar z$ & $q_2+q_3+q_4$\\
			\midrule
			$\kl|_F-F$& $x_1$ & $q_2$\\
			$\kl^{\tensor 2}|_F-F$& $\bar x_0x_1$ & $q_1+q_2$\\
			& $y_1 = {x_1 \bar z}/{\bar y_0}$ & $
			q_3+q_4$\\
			$\kl^{\tensor 3}|_F-F$& $\bar x_0^2x_1$ & $2q_1+q_2$\\
			& $\bar x_0y_1$ & $q_1+q_3+q_4$\\
			& $x_1\bar y_0$ & $ 3q_2$\\
			\bottomrule
		\end{tabular}
	\end{table} 
We now use this to carefully pick generators for $R$, collecting important information in  Table \ref{tab: low deg}.

We first choose generators for $R(F, \kl|_F)$. We start by picking $\bar x_0, \bar y_0 \in \IC(F)$ such that $H^0(F,\kl|_F)=\langle \bar x_0\rangle$ and $H^0(F,\kl^{\tensor 2}|_F) = \langle \bar x_0^2,\bar  y_0\rangle$. These two independent sections of $H^0(F,\kl^{\tensor 2}|_F)$ define a double cover $ F\to  \IP^1$ ramified over four points, say $q_1, q_2, q_3$, and $q_4$. If $F$ is nodal, two of these ramification points coincide, and we assume without loss of generality that $q_3 = q_4$ and that $\bar x_0$ vanishes at $q_1$. Furthermore, we choose $\bar y_0$ such that its divisor is $2q_2$. Now, the deck involution of the cover acts on the space $H^0(F,\kl^{\tensor 3}|_F)$. We choose a third generating section $\bar z$ for this space as a generator of the corresponding anti-invariant subspace. With these choices, the double cover structure is defined by the relation
\begin{equation}\label{eq: double cover relation F}
(\bar x_0\bar z)^2 - \bar x^2_0 \bar y_0 \tilde f_4(\bar x_0,\bar  y_0) = 0
\end{equation}
where $\tilde f_4$ is a weighted homogeneous polynomial of degree $4$. In particular, the divisor of $\bar z$ is $q_2+q_3 + q_4$, and the divisor of $\tilde f_4(\bar x_0,\bar y_0)$ is $2q_3+2q_4$.
We note that the involution induced by $\kl|_F$ on $F$ acts by 
\[ (\bar x_0, \bar y_0, \bar z)\mapsto  (\bar x_0, \bar y_0, -\bar z).\]

Next, we turn our attention to the sections in $R$ that vanish on $F$. Since $|\kl^{\tensor 2}|_F|$ is a base-point-free pencil on $F$ and $H^1(F, \kl^{\tensor (n-2)}|_F - F) = 0$ for all $n\geq 2$, the base-point-free pencil trick implies that the multiplication maps
\[
H^0( F, \kl^{\tensor 2}|_F) \otimes H^0(F, \kl^{\tensor n}|_F -F) \to H^0(F, \kl^{\tensor n+2}|_F - F)
\]
are surjective for all $n\geq 2$. Therefore, it suffices to analyse the graded pieces of degree at most three and the relations among the resulting generators. Up to relabeling, we may assume that $\ko_F(-F) \cong \ko_F(q_2-q_1)$. We then choose $x_1$ to be a generator of $H^0(F, \kl|_F-F)$, which consequently vanishes at $q_2$.

In $H^0(F,\kl^{\tensor 2}|_F-F)$, we obtain the linearly independent sections $\bar x_0x_1$ and $y_1 \coloneqq x_1\bar z/\bar y_0$. Note that $y_1$ is indeed a regular section because its divisor, $q_3+q_4$, is effective. Similarly, the three sections in $H^0(F, \kl^{\tensor 3}_F-F)$ listed in Table \ref{tab: low deg} are linearly independent, as can be verified by evaluating their divisors.

Finally, we choose lifts $x_0, y_0\in R^+$ and $z\in R^-$ of $\bar x_0, \bar y_0, \bar z \in R(F, \kl|_F)$, respectively. This yields the desired generators for the section ring. The complete set of generators is summarised in Table \ref{tab: low deg}.

	To show that these sections satisfy the relations defining the ideal $I$, first note that $x_1^2 = y_1^2 = x_1y_1 = 0$ in $R$, as all three sections vanish twice along $F$. Furthermore, the relation $x_1z = y_1y_0$ holds by the definition of $y_1$ and is independent of the choice of lifts precisely because of the three previous quadratic relations. In particular, the five sections satisfy a relation of the form
 \[
    z^2 = g_6(x_0, x_1, y_0, y_1) = y_0\tilde f_4(x_0,y_0) + x_0x_1 \tilde g_4(x_0,y_0) + y_1\tilde h_4(x_0,y_0),
    \]
    which reduces to \eqref{eq: double cover relation F} when restricted to $F$.
    However, $x_0, x_1, y_0 $ and $z^2$ are invariant while $y_1 = zx_0/y_0$ is anti-invariant, so $\tilde h_4 = 0 $.

    By adjusting the choice of the lift $y_0$ of $\bar y_0$ appropriately, we can ensure that the polynomial $g_6$ is divisible by $y_0$, which yields $z^2 = y_0f_4(x_0, x_1, y_0)$. More explicitly, letting $a=\tilde{f}_4(1,0)$ and $b=\tilde{g}_4(1,0)$, the new lift is given by $y_0 + \frac{b}{a}x_0x_1$. Note that $\tilde{f}_4$ is a homogeneous polynomial of degree two in $x_0^2$ and $y_0$; thus, $a$ is simply the coefficient of $x_0^4 = (x_0^2)^2$ and $b$ is defined analogously. We have thus shown that all elements of $I$ vanish in the ring $R$, and a straightforward verification confirms that no further relations exist.
    
    Given the choices made during the proof, we have also seen that the involution acts as stated. 
\end{proof}

   With the description of the section ring $R(S_c, \kl_c)$ in hand, we turn to the ample model of the marked elliptic surface $(f\colon  S\to C, \kl)$ in our local setting.

\begin{prop}\label{prop: alg model double fibre}
	For a suitable local coordinate $T$ of $C$ at the special point $c$, the ample model $X$ is embedded in $C \times \mathbb{P}(1^2, 2^2, 3)$ with coordinates $X_0, X_1, Y_0, Y_1, Z$ and ideal sheaf
	\[ 
    \ki = \left( \rk \mat{ T  & X_1 & Y_1\\ X_1 & Y_0 & Z \\ Y_1 & Z & F_4 } \leq 1\right),
    \]
	where $F_4$ is a section of $\ko(4)$. In particular, the quotient by the involution $X/\tilde \tau$ is given by $\{ TY_0 -X_1^2 = 0 \} \subset C \times \IP(1,1,2)$.
\end{prop}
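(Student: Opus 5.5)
The plan is to lift the presentation of Lemma~\ref{lem: section ring double fibre} from the central fibre to the whole relative section ring $\kr(f,\kl)=\bigoplus_n \ke_n$, working over the local ring $\ko_{C,c}$ with uniformiser $t$.

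\emph{Step 1: lifting generators and the flatness criterion.} The fibre of $\ke_n$ at $c$ is $H^0(S_c,\kl_c^{\tensor n})$, and $R^1f_*\kl^{\tensor n}=0$ for $n\geq 1$. Hence $\kr\tensor \ko_{C,c}/(t)\isom R(S_c,\kl_c)$, and by Nakayama any lifts of generators of $R(S_c,\kl_c)$ generate $\kr$ near $c$. I would take lifts $X_0,Y_0$ and $X_1$ in the $\tilde\tau$-invariant parts $\ke_1^+,\ke_2^+$, and lifts $Y_1,Z$ in the anti-invariant parts. This is possible because the involution acts compatibly on $\kr$ and on $R(S_c,\kl_c)$, with the signs recorded in Lemma~\ref{lem: section ring double fibre}(ii). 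The relative section ring is flat over $C$, and it has the correct Hilbert function, namely that of the central fibre. It therefore suffices to find, among the lifted generators, relations deforming the $2\times 2$ minors of the central matrix whose quotient is again flat with the same fibre. A $3\times 3$ symmetric determinantal presentation deformed by perturbing its entries stays flat: the codimension of the minors is preserved, and the Eagon--Northcott/Józefiak resolution lifts. So the problem reduces to showing that the relations can be written as the minors of a symmetric matrix whose $(1,1)$ entry is $T$.

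\emph{Step 2: identifying the $(1,1)$ entry.} This is the key geometric input. On $S$, $X_1$ is a section of $\kl$ vanishing on $F$, and the central fibre is $2F=\operatorname{div}(t)$. Therefore $X_1^2$ vanishes on $2F$, so $X_1^2=t\,u$ for some $u\in\ke_2$. The same argument gives $X_1Y_1=t\,v$ and $Y_1^2=t\,w$. On the generic fibre, $X_1^2/t$ is an invariant degree-$2$ section with divisor $2q_2$ on $F$. Adjusting the lift $Y_0$ modulo $t$ and modulo $X_0^2$, I would arrange $X_1^2=tY_0$ and likewise $X_1Y_1=tZ$. I would then write $Y_1^2=tF_4$, which \emph{defines} $F_4$, after checking its degree and invariance. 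The remaining minors $X_1Z-Y_0Y_1$, $X_1F_4-Y_1Z$ and $Y_0F_4-Z^2$ follow by dividing by $t$: $\kr$ is torsion free over $\ko_{C,c}$, and for instance $t(X_1Z-Y_0Y_1)=X_1^2Y_1-X_1^2Y_1=0$. Restricting these relations to $t=0$ recovers the ideal $I$ of Lemma~\ref{lem: section ring double fibre} with $f_4=F_4|_{t=0}$. Flatness (Step~1) then shows there are no further relations, and $X=\Proj$ of this ring sits in $C\times\IP(1^2,2^2,3)$ with the stated ideal $\ki$.

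\emph{Step 3: the quotient.} The invariant subring is generated by $X_0,X_1,Y_0$ and the products of anti-invariant generators. Each such product, namely $Y_1^2=TF_4$, $Y_1Z$ and $Z^2=Y_0F_4$, lies in the subring generated by $X_0,X_1,Y_0$ by the relations. The only remaining relation among $X_0,X_1,Y_0$ is $TY_0=X_1^2$, and this gives $X/\tilde\tau\subset C\times\IP(1,1,2)$.

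I expect the main obstacle to be Step~2. One must check that the quotients $X_1^2/t$ and $X_1Y_1/t$ really are the generators $Y_0$ and $Z$ up to admissible changes of lift. This means $X_1^2/t$ restricted to $F$ must be a nonzero multiple of $\bar y_0$ rather than vanish identically, which is where the double fibre being exactly $\operatorname{div}(t)=2F$ is used. The normalisations must also be compatible with the earlier normalisation $z^2=y_0f_4$. Here I would compare divisors on $F$ using Table~\ref{tab: low deg}, as was done in the proof of Lemma~\ref{lem: section ring double fibre}, and absorb any constants into the coordinate $T$.
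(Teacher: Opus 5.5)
Your proposal is correct and follows essentially the paper's own argument. The paper also fixes lifts $X_0,X_1$ of $x_0,x_1$, defines $Y_0=X_1^2/T$ and $Z=X_1Y_1/T$ using that $X_1,Y_1$ vanish on $F$ while $\operatorname{div}(T)=2F$, writes $Y_1^2=T\,F_4(X_0,X_1,Y_0)$ by expanding in generators of $\ke_4^+$, and compares with Lemma~\ref{lem: section ring double fibre} on the special fibre. Your torsion-freeness derivation of the remaining minors and your flatness/Nakayama argument against extra relations make explicit what the paper handles by checking the relations on the generic and special fibres. The Józefiak lifting is not actually needed, since $\kr$ is already flat over the DVR and graded Nakayama then kills the kernel.
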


\begin{proof}
	For each $n\geq 0$ and $k=1,2$, consider the restriction map 
	\[ 
    \rho_{n,k}\colon H^0 (C, \ke_n) \to H^0(kF, \kl^{\tensor n}|_{kF})
    \]
    and observe that in our local setting, the bundles $\ke_n$ are trivial.
	Choose generators for the section ring on $S_c$ as prescribed by 
Lemma~\ref{lem: section ring double fibre}, and fix a trivialization 
$\mathcal{E}_1 = X_0 \ko_C \oplus X_1 \ko_C$ satisfying $\rho_{1,2}(X_i) = x_i$. The sections $X_0^2, X_0X_1, X_1 ^2$ are linearly independent on every smooth fibre, but $X_1^2$ vanishes along $S_c = 2F$, implying it is 
divisible by $T$, where $T$ is an arbitrary but fixed uniformiser on $C$ at $c$ 
(i.e., a local function vanishing to order one at $c$ and nowhere else). It follows 
that the section $Y_0 = X_1^2/T$ does not vanish identically along the reduced fibre 
$F$. In fact, because $Y_0$ is a square on the generic fibre, its restriction to 
$F$ vanishes to order two at a single point. Consequently, we may choose 
$\bar{y}_0 = \rho_{2,1}(Y_0)$ as in the proof of Lemma~\ref{lem: section ring double fibre}. Note that $X_0^2, X_0X_1, Y_0$ are generators for $\ke_2^+$. Let now $Y_1$ be a generator for $\ke_2^-$, which necessarily restricts to zero on $F$ since $H^0(F, \kl^{\tensor 2}|_F)$ is invariant. Then, up to scaling, Lemma~\ref{lem: section ring double fibre} implies that $\rho_{2,2}(Y_1) = y_1$. With all these choices fixed, applying the same coordinate 
transformations as in the proof of Lemma~\ref{lem: section ring double fibre} 
further ensures that $\rho_{2,2}(Y_0) = y_0$.

  Now, the bundle $\mathcal{E}_3^+$ has rank four and is generated by 
\[
X_0^3, \quad X_0^2X_1, \quad X_0Y_0 = \frac{X_0 X_1^2}{T}, \quad X_1Y_0 = \frac{X_1^3}{T}.
\]
Indeed, their restrictions to each fibre are linearly independent. In $\mathcal{E}_3^-$, we have the section $X_0Y_1$, which is non-zero on $2F$ but vanishes along $F$, and the section $X_1Y_1$, which vanishes identically along $2F$. Dividing by $T$ yields an additional generator $Z = X_1Y_1/T$ for this bundle. Note that the restriction $Z|_F$ is a non-zero anti-invariant section; hence, after scaling by an appropriate constant, we have $\rho_{3,1}(Z) = z$.

It remains to analyse the relations. The section $Y_1^2$ belongs to $\mathcal{E}_4^+$ and can therefore be expressed as an $\ko_C$-linear combination of the generators 
\[
X_0^4, X_0^3X_1, X_0^2Y_0, X_0X_1Y_0, Y_0^2.
\]
That is, $Y_1^2 = G_4(X_0, X_1, Y_0)$ for some relative weighted homogeneous polynomial $G_4$ of degree $4$. Since $Y_1$ vanishes on $F$, its square $Y_1^2$ vanishes along the multiple fibre $2F$, which allows us to factor out the uniformiser and write $G_4 = T \cdot F_4(X_0, X_1, Y_0)$. Finally, we verify that the relations defining the ideal sheaf $\mathcal{I}$ hold by restricting to the fibres. On the generic fibre, we can divide by $T$ and eliminate the variables $Z$ and $Y_0$, leaving $Y_1^2 = TF_4$ as the single defining relation. On the special fibre, the generators and relations restrict precisely to those computed in Lemma~\ref{lem: section ring double fibre}. Furthermore, by our construction, it is clear that the invariant subring corresponds to the asserted quotient.
\end{proof}

\begin{rem}
 Let us explain how to read off the Kodaira type of $F$ directly from the equations, assuming that we have a fibre of type $I_k$ with $k>0$.   In the context of Lemma~\ref{lem: section ring double fibre}, we can choose $f_4 \equiv (x_0^2 - y_0)^2 \pmod{x_1}$, so that the branch points with respect to the pencil $(x_0^2 : y_0) \colon F \to \mathbb{P}^1$ are $0$, $\infty$, and a double branch point at $1$. Consequently, in the ample model presented in Proposition~\ref{prop: alg model double fibre}, the node of the central fibre $X_c$ is the point $q = (c,(1:0:1:0:0))$, which is contained in the affine open set where $X_0Y_0 \neq 0$. For simplicity, we set $X_0 = 1$ and treat $Y_0$ as a local unit. The relations $T = X_1^2/Y_0$ and $Y_1 = X_1Z/Y_0$ can then be used to eliminate two of the variables, yielding a single relation of the form:
\begin{align*} 
0  &= Z^2- Y_0 F_4(T, X_0, X_1, Y_0, Y_1)\\
& = Z^2- Y_0 \left( \left( X_0^2-Y_0\right)^2 + X_1g_3( X_0, X_1, Y_0) +T\left( \dots \right) \right) \\
& = Z^2- Y_0 \left( \left( 1-Y_0\right)^2 + X_1g_3( 1, X_1, Y_0) +X_1^2/Y_0\left( \dots \right) \right) \\
& = Z^2- Y_0  \left( 1-Y_0\right)^2 + X_1^k\left (\text{unit near $q$} \right)
\end{align*}
for some integer $k \geq 1$. This defines an $A_{k-1}$ singularity, which, when resolved, gives an $I_k$ fibre in the relative minimal model $S$.
\end{rem}

\begin{cor}
 	The fibrewise involution $\tilde \tau$ has exactly two isolated fixed points, both of which are smooth points of $X$. These are the unique base point $q_1$ of the linear system $|\kl|_F|$ and the unique point $q_2\in F$ such that $\ko_F(F) \simeq \ko_F(q_2-q_1)$. 
\end{cor}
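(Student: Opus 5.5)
Since $\tilde\tau$ preserves every fibre and acts on the general fibre as the hyperelliptic-type involution of the degree-two pencil, it has exactly four fixed points there, and these trace out the curve in $X$ that maps onto $B_0$ under the quotient. Isolated fixed points can therefore only occur on special fibres. My plan is to use the explicit local model of Proposition~\ref{prop: alg model double fibre} near a double fibre. Near a non-multiple fibre, $X$ is a double cover of $W$ and the fixed locus is the ramification curve, which is purely one-dimensional; so those fibres contribute no isolated fixed points. The work is thus all at the double fibre $X_c = 2G$.

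By Lemma~\ref{lem: section ring double fibre}(ii), and hence in the family of Proposition~\ref{prop: alg model double fibre}, $\tilde\tau$ acts as $(X_0,X_1,Y_0,Y_1,Z)\mapsto(X_0,X_1,Y_0,-Y_1,-Z)$ on $C\times\IP(1^2,2^2,3)$. Fixed points in weighted projective space are points where $(X_0,X_1,Y_0,-Y_1,-Z)=(\lambda X_0,\lambda X_1,\lambda^2Y_0,\lambda^2(-Y_1)\cdot,\lambda^3 Z)$ for some $\lambda\in\IC^\times$. The cases are as follows.
\begin{enumerate}
\item $\lambda=1$ gives $Y_1=Z=0$.
\item $\lambda=-1$ gives $X_0=X_1=Z=0$.
\item $\lambda=\pm i$ gives $X_0=X_1=Y_0=0$, which is the point $(0:0:0:1:0)$ up to the remaining $Z$ constraint.
\end{enumerate}
I intersect each locus with the rank condition.
\begin{itemize}
\item Case $Y_1=Z=0$: the minors reduce to $TY_0=X_1^2$, $X_1F_4=0$ and $Y_0F_4=0$. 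This gives the one-dimensional component $\{F_4=0, TY_0=X_1^2\}$, i.e. the ramification curve, together with points where $X_1=Y_0=0$, hence $T=0$ and $Z=Y_1=0$. That is the point $(c,(1:0:0:0:0))$.
\item Case $X_0=X_1=Z=0$: the minors force $Y_0Y_1=0$ and $TY_0=0$, $TF_4=Y_1^2$, giving on $T=0$ the point $(0:0:0:1:0)$ if $F_4(0,0,0)$ vanishes, or more likely the point $(0:0:1:0:0)$. I will sort out carefully which of these lies on $X$.
\end{itemize}

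Next I identify these points with $q_1,q_2$ using Table~\ref{tab: low deg}. The point with only $X_0\ne0$ is where $x_1,y_0,y_1,z$ vanish. Since $x_1$ vanishes at $q_2$, I must recheck this carefully: the base point $q_1$ of $|\kl|_F|$ is the zero of $\bar x_0$, so it lies in $\{X_0=0\}$, i.e. the point with $X_0=X_1=0$, $Y_0\neq 0$. The other point, with $X_0\ne0$ and $X_1=Y_0=0$, is where $\bar y_0$ vanishes, namely $q_2$ with divisor $2q_2$, which matches $\ko_F(-F)\cong\ko_F(q_2-q_1)$. I must also check that these points are isolated, i.e. not on the ramification curve: the value of $F_4$ there must be nonzero. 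This follows from the condition in Lemma~\ref{lem: section ring double fibre} that $f_4$ is not divisible by $y_0$ or $x_0^2$ mod $x_1$, which makes $f_4$ nonvanishing at both points.

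Finally, for smoothness I work in affine charts. At $q_2$ ($X_0=1$) I use $TY_0=X_1^2$, and at $q_1$ ($Y_0=1$, with chart quotient by $\mu_2$) the minors let me eliminate $T=X_1^2/Y_0$ and $Y_1=X_1Z/Y_0$, leaving one equation $Z^2=Y_0F_4$, as in the remark above. In each chart I need to show the remaining local ring is regular. At $q_1$ this includes checking that the $\mu_2$-quotient from the weight-2 chart acts freely on the relevant coordinates, or is absorbed by the fact that $X_1,Z$ are odd. I expect this smoothness check at $q_1$, in the non-well-formed chart $Y_0\neq0$, to be the main obstacle; the alternative is to argue on $S$ directly, since $q_1,q_2$ lie on the smooth locus away from the contracted curves.
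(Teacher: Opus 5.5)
Your route, computing the fixed locus of $(X_0,X_1,Y_0,Y_1,Z)\mapsto(X_0,X_1,Y_0,-Y_1,-Z)$ directly on the determinantal model and then checking smoothness in charts, differs from the paper's. The paper reads everything off the quotient $X/\tilde\tau=\{TY_0=X_1^2\}\subset C\times\IP(1,1,2)$. Its only singular points are the two $A_1$ points $(c,(1:0:0))$ and $(c,(0:0:1))$, which are the images of $q_2$ and $q_1$ respectively.

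Your route is viable, but the key computation is wrong as written. For $\lambda=-1$ the condition is $(X_0,X_1,Y_0,-Y_1,-Z)=(-X_0,-X_1,Y_0,Y_1,-Z)$. So the locus is $\{X_0=X_1=Y_1=0\}$ with $Y_0$ and $Z$ arbitrary, not $\{X_0=X_1=Z=0\}$. On your locus $X$ has no points at all:
\begin{itemize}
\item $(0:0:1:0:0)$ violates $Y_0F_4=Z^2$, because $F_4(c;0,0,1)=f_4(0,0,1)\neq0$;
\item $(0:0:0:1:0)$ violates $TF_4=Y_1^2$.
\end{itemize}
You correctly place $q_1$ in $\{X_0=X_1=0,\ Y_0\neq0\}$. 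But it has $Z^2=f_4(0,0,1)Y_0^3\neq0$, so by your own case analysis it would \emph{not} be fixed. The case you leave to be ``sorted out'' has neither of your candidates as its answer.

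The repair is as follows. On $\{X_0=X_1=Y_1=0\}$ the minors reduce to $TY_0=TZ=0$ and $Z^2=Y_0F_4(T;0,0,Y_0)$. Since $F_4(T;0,0,1)$ is a unit near $c$, this forces $T=0$ and $Y_0\neq0$. It gives the single point $(c,(0:0:1:0:\sqrt{f_4(0,0,1)}))$; the sign of the root is absorbed by $\lambda=-1$. This point is $q_1$, and it is isolated because $Z\neq0$ keeps it off the component $\{Y_1=Z=0\}$.

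The smoothness step also needs redirecting.
\begin{itemize}
\item At $q_2$, the equation $TY_0=X_1^2$ is the equation of the quotient and has an $A_1$ point there, so it cannot show that $X$ is smooth. Instead use $F_4(c;1,0,0)=f_4(1,0,0)\neq0$. In the chart $X_0=1$, the minors $TF_4=Y_1^2$, $X_1F_4=Y_1Z$ and $Y_0F_4=Z^2$ then solve for $T,X_1,Y_0$ in terms of $(Y_1,Z)$, and they imply the remaining three minors. Hence $X$ is smooth at $q_2$ with local coordinates $Y_1,Z$, on which $\tilde\tau$ acts as $-1$.
\item At $q_1$, which you flag as the main obstacle, there is no real difficulty. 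In the chart $Y_0=1$, $X$ is locally the quotient of $\{Z^2=F_4(X_1^2;X_0,X_1,1)\}\subset\IA^3$ by $\mu_2$ acting as $(X_0,X_1,Z)\mapsto(-X_0,-X_1,-Z)$. The two preimages of $q_1$ have $Z=\pm\sqrt{f_4(0,0,1)}\neq0$, so the surface is smooth there and $\mu_2$ swaps them freely. Hence $X$ is smooth at $q_1$ with local coordinates $X_0,X_1$, and $\tilde\tau$ acts on them as $-1$.
\end{itemize}

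With these corrections your argument is complete. It is also more explicit than the paper's: it directly verifies the smoothness and the local action $-1$ that the paper's step from ``two $A_1$ points on the quotient'' to ``two smooth isolated fixed points'' leaves implicit.
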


\begin{proof}
It suffices to observe that the quotient $X/\tilde \tau$ has exactly two isolated $A_1$ singularities. Indeed, from the description $\{ TY_0 - X_1^2 = 0 \} \subset C \times \mathbb{P}(1,1,2)$, we see that these singularities are located at the points $(c, (1:0:0))$ and $(c, (0:0:1))$. These correspond precisely to the images of the points $q_1$ and $q_2$ in $F$, as detailed in the proof of Lemma~\ref{lem: section ring double fibre}.
\end{proof}

\begin{rem}
In our local setting, let us assume for simplicity that $F$ is smooth (of type $I_0$) so that $S$ and $X$ are equal. We observe that $\kl(F)$ is another relative polarisation of degree two, which agrees with $\kl$ on the generic fibre and thus induces the same global involution $\tau$ on $S$. We thus have a diagram
\[\begin{tikzcd}
 & \arrow[dashed]{dl}[swap]{\theta}\dar{2:1}\arrow[dashed]{dr}{\theta'} S \\
 \IP\left(f_*\kl\right) & S /  \tau \rar[dashed] \lar[dashed] & \IP\left( f_*\kl(F)\right)\\
 & \widetilde{S/\tau} \arrow{ul}{\sigma} \uar[swap]{\rho} \arrow{ur}[swap]{\sigma'}
\end{tikzcd}
\]
where the upper maps are (birationally) double covers, the map $\rho$ is the minimal resolution and $\sigma, \sigma'$ are compositions of two blow-ups. 
We illustrate this in Figure \ref{fig: quotient and both sides} below, where the blue curve, the strict transform of $F/\tau$ in $\widetilde{ S/\tau}$, contracts to a point in both ruled surfaces, so these differ by what is sometimes called an elementary transformation, i.e., a blow-up and a blow-down of one fibre. 

It is straightforward to generalise this observation to the case where the double fibre is of type $I_k$ with $k\geq 1$.
\end{rem}
\begin{figure}[t]
	\caption{Birational maps between  $S/\tau$ and the targets of the two birational double cover models with respect to $\kl$ and $\kl(F)$.}\label{fig: quotient and both sides}
	\begin{tikzpicture}[scale = .85,every node/.style = { font = \scriptsize} ]
		
		\begin{scope}[xshift = 0cm, yshift = 4.5cm] 
			\draw[rounded corners] (-2.0, 1.5) rectangle (1.5, -1.25);
			\node[] at (.8, 1.1) {\small$ S/\tau  $};

			\draw[very thick, blue] (-1, -1.25) to ++ (0, 2.75);
			\draw (-1, -1.25) to node[left] {$2F/\tau$} ++ (0, 2.75);

			\fill[red] (-1, -.5) circle (2pt);
			\fill[green!60!black] (-1, .75) circle (2pt);

			\node (A) at (0.5,0.125) {$A_1$ sing.};
			 \draw[->] (-.3, .225) to  ++(-.4, .4);
			 \draw[->] (-.3, .025) to  ++(-.4, -.4);

			\draw[pfeil, dashed] (2, 0) ->   ++ (1.5, 0); 
			\draw[pfeil, dashed] (-2.5, 0) ->   ++ (-1.5, 0);

		\end{scope}

		\begin{scope}[xshift = 6cm, yshift = 4.5cm]
			\draw[rounded corners] (-2.0, 1.5) rectangle (1.5, -1.25);
			\node[] at (.6, 1.1) {\small$ \IP\left(f_*\kl(F)\right) $};
			\draw[red, thick, dashed](-1, -1.25) to  node[left] { $\Gamma'$}++ (0, 2.75);

		\end{scope}

		\begin{scope}[xshift = -6cm, yshift = 4.5cm]
			\draw[rounded corners] (-2.0, 1.5) rectangle (1.5, -1.25);
\node[] at (.6, 1.1) {\small$ \IP\left(f_*\kl\right) $};

\draw[green!60!black, thick, dotted](-1, -1.25) to  node[left] { $\Gamma$}++ (0, 2.75);
		\end{scope}

		\begin{scope}[xshift = 0cm, yshift = 0cm]
			\draw[rounded corners] (-2.0, 1.5) rectangle (1.5, -1.25);
			\node[] at (.8, 1.1) {\small$ \widetilde{S/\tau}$};

			\draw[very thick, blue] (-1, -1.25) to ++ (0, 2.75);
			\draw (-1, -1.25) to node[left] {$-1$} ++ (0, 2.75);

			\draw[red, thick, dashed] (-1.2, -.5) to node[below] {$-2$} ++ (1.5, 0);
			\draw[green!60!black, thick, dotted] (-1.2, .75)  to node[above ] {$-2$} ++ (1.5, 0);
			
			\draw[pfeil] (2, 0.25) -> node[below right] {two blow-ups}  ++ (3.5, 2.5); 
			\draw[pfeil] (-2.5, 0.25) ->  node[below left] {two blow-ups}  ++ (-3.5, 2.5); 
			\draw[pfeil] (-.25, 1.75) -> node[right] {resolution}  ++ (0, 1.25); 
		\end{scope}

	\end{tikzpicture}
\end{figure}

\section{Relative determinantal models inside weighted projective bundles}\label{sect:determinantal}
In this section we show how to glue the local description from Section \ref{sect: local models} to give a global embedding into a weighted projective bundle (Theorem \ref{thm:embed_double}). In favourable cases, which require a splitting condition, we can also write down the defining relations globally (Theorem \ref{thm: global determinantal models}).
\begin{defin}\label{def: splitting}
	A marked elliptic surface $(f\colon S\to C, \kl)$ of relative degree two satisfies the \newnotion{splitting condition} if $\ke_1=f_*\kl$ splits as a sum of line bundles, and a generating section of one of the summands vanishes along the reductions $F_1,\ldots, F_k$ of the multiple fibres (whenever they exist). That is, writing $\ke_1= \ka_0 \oplus \ka_1$, we require that $\ka_1 \subset f_* \kl( - \tsum_i F_i)$.
\end{defin}

In practice, the second condition in Definition \ref{def: splitting} above guarantees that the line bundle $\ka_1$ can globally assume the role that the coordinate $X_1$ plays in the local model of Proposition \ref{prop: alg model double fibre}, which is indeed necessary to globalise the local determinantal relations.

As a warm-up, let us first consider the classical case. For the remainder of this section, we fix a marked elliptic surface $(f\colon S\to C, \kl)$ of relative degree two. We adopt the convention that subsheaves of $\ke_1, \ke_2,$ and $\ke_3$ are denoted by $\ka_{\bullet}, \kb_{\bullet},$ and $\kc_{\bullet}$, respectively, and so on.

\begin{thm}\label{thm:embed_no_double}
	When $f$ has no double fibres, the ample model $X$ embeds as a relative hypersurface of degree four into the weighted projective bundle $ \IP_{(1,2)}\left(\ke_1, \ke_2^{-}\right)$. If, in addition, $f$ satisfies the splitting condition,  $\ke_1 = \ka_0 \oplus \ka_1$,  and we denote the global coordinates with $X_0, X_1, Y$, then we can write 
	  \[    X \cong \{Y^2 - F_4 = 0\} \subset \IP = \IP_{(1,1, 2)}\left(\ka_0, \ka_1, \ke_2^{-}\right)\]
	with $F_4 = \sum_{i = 0}^4 s_{i} X_0^i X_1^{4-i}$ and $s_i \in \Hom\left((\ke_2^-)^{\tensor 2}, \ka_0^i\tensor \ka_1^{4-i}\right)$. 
\end{thm}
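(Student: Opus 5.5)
The plan is to build the graded $\ko_C$-algebra map from the free weighted symmetric algebra $\ks = \wSym_{(1,2)}(\ke_1, \ke_2^-)$ onto the relative section ring $\kr(f,\kl)$ and identify its kernel with the ideal of a single relative quartic. By definition of $\IP_{(1,2)}$ as a relative $\Proj$, it suffices to produce a surjection of sheaves of graded algebras $\ks \onto \kr(f,\kl)$ (in sufficiently high degrees) and show that its kernel is generated locally by one element of degree four. Both properties are local on $C$, so I reduce to the local situation of Section~\ref{sect: local models}. Since there are no double fibres, Proposition~\ref{prop:types_sing_fibres} and the accompanying remark apply near every point: $X$ is locally a relative quartic hypersurface in $C \times \IP(1,1,2)$.

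First step: define the map. In degree one there is the identity $\ke_1 \to \ke_1$. In degree two we use the inclusion $\ke_2^- \hookrightarrow \ke_2$ coming from the splitting $\ke_2 = \ke_2^+ \oplus \ke_2^-$ by the involution. Second step: surjectivity. This can be checked on fibres, since all $\ke_n$ are vector bundles with fibre $H^0(S_c,\kl_c^{\tensor n})$. On a non-multiple fibre, Lemma~\ref{lem: linear systems on Kodaira fibres} shows that $H^0(\kl_c)$ gives the double cover to $\IP^1$. Then $\Sym^2 H^0(\kl_c)$ is $3$-dimensional, invariant, and injects into $H^0(\kl_c^{\tensor 2})$, so it equals $(\ke_2^+)_c$. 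Since $\ke_2$ has rank $4$ and $\ke_2^+$ has rank $3$, the anti-invariant part $\ke_2^-$ is a line bundle whose fibre supplies the remaining generator $y$, which is nonzero on every fibre. The fibre ring is then $\IC[x_0,x_1,y]/(y^2 - f_4)$, the standard description of a degree-two polarised Kodaira fibre, consistent with the Hilbert series $(1-t^4)(1-t)^{-2}(1-t^2)^{-1}$ recorded in the proof of Lemma~\ref{lem: section ring double fibre}. Surjectivity on fibres plus Nakayama then gives global surjectivity.

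Third step: the relation. The section $Y^2$ is invariant, so it lies in $\ke_4^+$. By the fibrewise statement, $\ke_4^+ = \Sym^4 \ke_1 \oplus (\Sym^2\ke_1)\cdot \ke_2^- \cdot{}$? To avoid that issue I argue via involution-invariance instead: $\ke_4^+$ has rank $5 = \rk \Sym^4\ke_1$, and the map $\Sym^4\ke_1 \to \ke_4^+$ is fibrewise injective, hence an isomorphism. So $Y^2 = F_4(X)$ for a unique $F_4 \in H^0(C, \Sym^4\ke_1 \tensor (\ke_2^-)^{\vee\tensor 2})$, which is a section of $\ko_{\IP}(4)$. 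The kernel contains $Y^2 - F_4$. Comparing Hilbert series fibrewise, together with flatness, shows that this element generates the kernel.

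Under the splitting condition, write $\ke_1 = \ka_0 \oplus \ka_1$. Then $\Sym^4 \ke_1 = \bigoplus_i \ka_0^i \tensor \ka_1^{4-i}$, and $F_4$ decomposes into components $s_i \in \Hom((\ke_2^-)^{\tensor 2}, \ka_0^i \tensor \ka_1^{4-i})$, which gives the stated global equation in $\IP_{(1,1,2)}(\ka_0,\ka_1,\ke_2^-)$. The main obstacle is the fibrewise generation claim at singular Kodaira fibres. There $\kl_c$ may have degree zero on some components, so I would pass to $X_c$ via Lemma~\ref{lem: linear systems on Kodaira fibres}, where $\bar F \to \IP^1$ is still a double cover branched at four points. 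I would also use $h^1 = 0$ to ensure that the restriction maps are surjective and the ranks are constant.
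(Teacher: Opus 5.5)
Your proposal is correct and follows the paper's route: use the local description of non-multiple fibres in Section~\ref{sec: non-multiple fibres} to get the surjection $\wSym_{(1,2)}(\ke_1,\ke_2^-)\onto\kr(f,\kl)$ with kernel locally generated by a quartic, then, under the splitting condition, decompose $F_4$ along $\Sym^4\ke_1=\bigoplus_i\ka_0^i\tensor\ka_1^{4-i}$. Your additional details fill in what the paper leaves implicit: the fibrewise check plus Nakayama, the rank count $\ke_4^+\cong\Sym^4\ke_1$ (which shows the global relation has no $Y$-linear terms), and the Hilbert-series/flatness argument that $Y^2-F_4$ generates the kernel. You should delete the aborted sentence putting $(\Sym^2\ke_1)\cdot\ke_2^-$ inside $\ke_4^+$, since that summand is anti-invariant.
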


\begin{proof}
According to the local description in Section \ref{sec: non-multiple fibres}, the weighted symmetric algebra $\text{wSym}_{(1,2)}\left(\ke_1, \ke_2^{-}\right)$ surjects onto the relative section ring $\kr(f, \kl)$, and its kernel is locally generated by a relative polynomial of degree four.

Since there are no double fibres, the splitting condition reduces to  $\ke_1 = 
\ka_0 \oplus \ka_1$. If we interpret the global coordinates as $X_i \colon \pi^* \ka_i \to \ko_{ \IP}(1)$ (see Section \ref{sect:wpb}), then to get a well defined relation $Y^2 - F_4\colon \pi^* \left( \ke_2^-\right) ^{\tensor 2} \to \ko_{\IP}(4)$, the $s_i$ are as claimed.
\end{proof}	

We now turn to the case where $f$ has $k \geq 1$ multiple fibres $2F_1, \ldots, 2F_k$. Here, it is convenient to treat the general case and the splitting case separately. 

\begin{thm}\label{thm:embed_double}
  If $f$ has at least one double fibre, then there exist line bundles $\kb_0$ and $\kc_0$ on $C$ such that the ample model $X$ embeds into the weighted projective bundle 
  \[
  \IP_{(1,2,3)}\left(\ke_1, \kb_0\oplus\ke_2^{-}, \kc_0\right)
  \]  as a relative determinantal variety, locally described by equations as in Proposition \ref{prop: alg model double fibre}. 
\end{thm}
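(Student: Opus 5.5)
The strategy is to construct a surjection of sheaves of graded $\ko_C$-algebras
\[
\wSym_{(1,2,3)}\left(\ke_1, \kb_0\oplus\ke_2^{-}, \kc_0\right) \onto \kr(f,\kl).
\]
Once this surjection exists, taking $\Proj_C$ yields a closed embedding $X\hookrightarrow \IP_{(1,2,3)}(\ke_1, \kb_0\oplus\ke_2^-,\kc_0)$. The remaining task is then to identify its kernel locally.

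\textbf{Choice of the line bundles.} The bundle $\ke_2^+$ has rank three and contains $\Sym^2\ke_1$ as a subsheaf of full rank. Away from the double fibres the inclusion $\Sym^2\ke_1\to\ke_2^+$ is an isomorphism. Near $c_i$, by Proposition \ref{prop: alg model double fibre}, its image is spanned by $X_0^2, X_0X_1, TY_0$, since $X_1^2 = TY_0$. I would therefore define
\[
\kb_0 \coloneqq \ke_2^+/\Sym^2\ke_1,
\]
after first verifying that this quotient is a line bundle rather than a torsion sheaf. The cokernel is torsion, supported at the $c_i$ and of length one there, so $\det\ke_2^+ = \det(\Sym^2\ke_1)(\sum c_i)$. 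To get a genuine rank one summand instead, I would use the subsheaf
\[
\ke_1\cdot \ke_1(-\tsum F_i)\subset \ke_2^+,
\]
or more directly the saturation argument: $\ke_2^+$ is generated by $\Sym^2\ke_1$ together with one extra local generator $Y_0$ near each $c_i$. The cleanest choice is $\kb_0\coloneqq \ke_2^+/(\text{image of } \ke_1\otimes \ka)$, where locally $\ka=\langle X_0\rangle$. Since $X_0$ is not globally defined, I would instead choose a line subbundle $\ka\subset\ke_1$ agreeing with $\langle X_0\rangle$ at each $c_i$, i.e.\ not vanishing at the base points. Such an $\ka$ exists after twisting by a sufficiently negative line bundle on the affine-like base. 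Then $\ke_1\otimes\ka\to\ke_2^+$ is a subbundle of rank two, and the quotient $\kb_0$ is a line bundle. Locally near $c_i$ it is generated by the image of $Y_0$, and away from the $c_i$ by the image of $X_1^2$.

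Similarly, I would let $\kc_0$ be the line bundle quotient of $\ke_3^-$ by the image of $\ke_1\otimes\ke_2^-$. Locally near $c_i$ this image is spanned by $X_0Y_1$ and $X_1Y_1 = TZ$. So I would take $\kc_0$ to be a quotient of $\ke_3^-$ by a rank one subbundle $\ka\cdot\ke_2^-$, generated locally by $Z$, or by $X_1Y_1$ away from the $c_i$.

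\textbf{Lifting and surjectivity.} I would then lift these quotients to actual subsheaves $\kb_0\subset\ke_2^+$ and $\kc_0\subset\ke_3^-$, using splittings of the short exact sequences. The splittings exist on an affine cover, and globally after the twist, since $\Ext^1$ of line bundles vanishes there, or alternatively by working with the quotient maps directly. The maps $\kb_0\oplus\ke_2^-\to \ke_2$ and $\kc_0\to\ke_3$ then induce the algebra map. Surjectivity is checked fibrewise. At smooth fibres the generators in degrees one and two suffice. At double fibres the local generators $X_0,X_1,Y_0,Y_1,Z$ of Proposition \ref{prop: alg model double fibre} are hit by construction, after coordinate changes that preserve triangular form.

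\textbf{Identification of the kernel.} Locally near each $c_i$ the kernel is the rank $\leq 1$ ideal of Proposition \ref{prop: alg model double fibre}. Away from the $c_i$, the extra variables $Y_0$ and $Z$ are eliminated by the relations $TY_0 = X_1^2$ and $TZ = X_1Y_1$, which recovers the hypersurface description of Theorem \ref{thm:embed_no_double}. So the image is a relative determinantal variety with the claimed local equations.

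The main obstacle is making the global choice of the subsheaves compatible with the local coordinates $X_0$ and $X_1$ of Proposition \ref{prop: alg model double fibre} at every $c_i$ simultaneously. This is exactly why the global equations require the splitting condition, while the embedding itself only needs the line bundles $\kb_0$ and $\kc_0$ to exist.
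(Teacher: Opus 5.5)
Your argument is correct in substance, but it is organised differently from the paper's. The paper never chooses a subbundle of $\ke_1$. It works directly with the torsion cokernels $\kt_2=\ke_2^+/\Sym^2\ke_1$ and $\kt_3=\ke_3^-/(\ke_1\otimes\ke_2^-)$, which are the quotients you first tried and discarded. It takes \emph{any} sufficiently negative line bundle $\kb_0\twoheadrightarrow\kt_2$ with $H^1(C,\kb_0^\vee\otimes\Sym^2\ke_1)=0$, lifts the surjection to $\ke_2^+$, and treats $\kc_0$ the same way. Surjectivity of $\Sym^2\ke_1\oplus\kb_0\to\ke_2^+$ is then automatic.

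You instead first pick a saturated line subbundle $\ka\subset\ke_1$ that is transverse at each $c_i$ to the $X_1$-direction. This makes $\ke_2^+/(\ke_1\otimes\ka)$ and $\ke_3^-/(\ka\otimes\ke_2^-)$ honest line bundles, locally generated by $Y_0$ and $Z$. This is exactly the mechanism of Theorem~\ref{thm: global determinantal models}, with $\ka$ in the role of $\ka_0$. It has the merit of showing that only the transverse summand, which always exists, matters for $\kb_0,\kc_0$; the splitting condition is needed solely to globalise the entry $X_1$. The price is an extra existence argument for $\ka$, and you still need a twist to lift, so the paper's route is shorter.

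Two points need to be stated correctly.
\begin{itemize}
\item The condition on $\ka$ is not ``not vanishing at the base points'': every section of $\ke_1$ restricts on $F_i$ to a section vanishing at the base point $q_i$. What you need is that $\ka\otimes k(c_i)$ differs from the line of sections vanishing identically on $F_i$. Such an $\ka$ exists because $C$ is projective, not ``affine-like'': take $\ka$ very negative and a general section of the globally generated bundle $\ke_1\otimes\ka^\vee$. That section is nowhere zero and avoids the $k$ given lines.
\item The twist used to lift must be by $\ko_C(-D)$ with $D$ disjoint from $c_1,\dots,c_k$, as with the general point $c$ in Theorem~\ref{thm: global determinantal models}. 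Otherwise the lifted $\kb_0,\kc_0$ no longer reach $Y_0$ and $Z$ at the double fibres, and fibrewise surjectivity fails.
\end{itemize}
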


\begin{proof}
We outline the argument from \cite[Theorem 3.12]{ThesisAnna} and split all the bundles $\ke_i = \ke_i ^+\oplus \ke_i ^-$ into invariant and anti-invariant parts. 

Recall that $\ke_1 = \ke_1^+$. Next consider the  short exact sequence,, see e.g., \cite[Proposition 3.11]{cp22},
\[
0 \to \Sym^{2}(\ke_1) \to \ke_2 \to \kt_2 \to 0,
\]
where $\kt_2$ is a torsion sheaf supported on the points $c_1, \ldots, c_k$. Since the base curve  $C$ is assumed projective, we can find a sufficiently negative line bundle $\kb_0$ with a surjection $\kb_0 \twoheadrightarrow \kt_2$ and such that $\Ext^1(\kb_0,\Sym^{(1)}(\ke_1)_2)=H^1(C,\kb_0^\vee\otimes \Sym^{(1)}(\ke_1)_2)$ vanishes. 
In other words, the chosen surjection can be lifted to a map $\kb_0 \to \ke_2^+$ and, since the invariant subring of every fibre is generated in degree at most two, we get a surjection
\[\wSym_{(1,2)}(\ke_1,\kb_0) \onto  \kr(f,\kl)^+.\]

For the anti-invariant part, we have that $\ke_1^- = 0$, and the line bundles $ \ke_2^-$ and $\ke_3^-$ fit into a short exact sequence
\[
0 \to \ke_1\tensor\ke_2^{-} \to \ke_3^- \to \kt_3\to 0.
\]
As before, we can choose $\kc_0$ such that $\kc_0 \twoheadrightarrow \kt_3$ and $H^1(C,\kc_0^\vee \otimes \kf_3)=0$, enabling us to lift the surjection to a map to $\ke_3^-$. This yields a surjection  
\[
\wSym_{(1,2,3)}(\ke_1, \kb_0\oplus\ke_2^{-}, \kc_0) \twoheadrightarrow \kr(f,\kl),
\]
because fibre-wise the section ring is generated in degree three. 
Taking $\Proj_C$ embeds the ample model into a weighted projective bundle with, locally, the same type of relations as given in Proposition \ref{prop:types_sing_fibres} and Proposition \ref{prop: alg model double fibre}.
\end{proof}

To treat the splitting case next, for a divisor $\Delta$ on $C$, we will denote by $\sigma_\Delta$ a section of $\ko_C(\Delta)$, unique up to scaling, whose divisor is $\Delta$. 

\begin{thm}\label{thm: global determinantal models}
Assume $f$ satisfies the splitting condition and has at least one double fibre.  Fix a general point $c\in C$ and let 
\begin{align*}
\kb_0& = \ko_C(-2mc) \tensor \left( \ke_2^+ /\left( \ka_0 \tensor \ke_1\right)\right) \\
 \kc_0&  = \ko_C(-mc) \tensor \left( \ke_3^- /\left( \ka_0 \tensor \ke_2^-\right)\right).
\end{align*}
Then for $m\geq 0$ sufficiently large, the relative section ring $\kr(f, \kl)$  is isomorphic to 
\[ \wSym^*_{(1,2,3)}\left(\ka_0 \oplus \ka_1; \kb_0 \oplus {\ke_2^-} ; \kc_0\right)/\left(  \rk \mat{ \sigma_{\sum_i c_i}  & \sigma_{mc}X_1 & Y_1\\ \sigma_{mc}X_1 & Y_0 & Z \\ Y_1 & Z & F_4 } \leq 1\right) \]for a section $F_4$ in the bundle 
\[\shom \left(\ka_0^{\tensor 4}\oplus \left (\ka_0^{\tensor 3}\tensor \ka_1\right)
\oplus \left (\ka_0^{\tensor 2}\tensor \kb_0\right)
\oplus \left (\ka_0\tensor \ka_1\tensor \kb_0\right) \oplus \kb_0^{\tensor 2}, {\ke_2^-}^{\tensor 2}\right).
\]

We can choose $m=0$ if the cohomology groups 
$H^1\left(\left( \ke_2^+ /\left( \ka_0 \tensor \ke_1\right)\right)^\vee\tensor  \left( \ka_0 \tensor \ke_1\right)\right)$ and $H^1\left( \left( \ke_3 ^-/ \ka_0 \tensor {\ke_2^-}\right)^\vee\tensor  \ka_0 \tensor {\ke_2^-}\right)$ both vanish.
\end{thm}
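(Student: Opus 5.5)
The plan is to build a global surjection from the weighted symmetric algebra onto $\kr(f,\kl)$ that, near each $c_i$, becomes exactly the local presentation of Proposition~\ref{prop: alg model double fibre}. We then identify the kernel by comparing with the local models.

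\textbf{Degree one and the entry $\sigma_{mc}X_1$.} Write $\ke_1=\ka_0\oplus\ka_1$. By the splitting condition, the generator $X_1$ of $\ka_1$ vanishes on every $F_i$, so it plays the role of the local coordinate $X_1$ of Proposition~\ref{prop: alg model double fibre} at all double fibres simultaneously. Near $c_i$, the fibre computation there shows that $X_1^2$ is divisible by $T=\sigma_{c_i}$, while $X_0X_1$ and $X_0^2$ are not. Hence the global section $X_1^2\in\Sym^2\ke_1\subset\ke_2^+$ is divisible by $\sigma_{\sum c_i}$. The idea is to define the degree-two invariant generator $Y_0$ through the relation $\sigma_{\sum c_i}Y_0=\sigma_{mc}^2X_1^2$.

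The twists are arranged so that this makes sense. The quotient $\ke_2^+/(\ka_0\tensor\ke_1)$ is a line bundle, because $\ke_2^+$ has rank three and $\ka_0\tensor\ke_1$ has rank two. The saturation argument above identifies it with $\ka_1^{2}(\sum c_i)$, up to the local analysis.

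\textbf{Lifting to $\ke_2^+$.} The map we need is a lift $\kb_0\to\ke_2^+$ of the twisted quotient $\ko(-2mc)\tensor\ke_2^+/(\ka_0\tensor\ke_1)$. The obstruction to this lift lies in $H^1\bigl((\ke_2^+/\ka_0\ke_1)^\vee\tensor\ka_0\tensor\ke_1\tensor\ko(2mc)\bigr)$. By Serre vanishing this group is zero for $m\gg0$, and it is zero already at $m=0$ under the stated hypothesis.

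The lift is only determined up to $\Hom(\kb_0,\ka_0\tensor\ke_1)$. I would use this freedom, together with the local coordinate change $y_0\mapsto y_0+\tfrac ba x_0x_1$ from Lemma~\ref{lem: section ring double fibre}, to arrange $\sigma_{\sum c_i}Y_0=\sigma_{mc}^2X_1^2$ exactly. This amounts to choosing the lift as the preimage of $\sigma_{mc}^2X_1^2/\sigma_{\sum c_i}$. I would take this as the definition of the map, check that its image generates the quotient, and obtain the correct twist in this way.

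\textbf{Degree three.} The same procedure applies to $\ke_3^-$. The quotient $\ke_3^-/(\ka_0\tensor\ke_2^-)$ is a line bundle. I would define $Z$ by $\sigma_{\sum c_i}Z=\sigma_{mc}X_1Y_1$, which matches $Z=X_1Y_1/T$ locally. Lifting $\kc_0\to\ke_3^-$ is again controlled by the second $H^1$ group in the statement.

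\textbf{The $2\times2$ minors.} With these choices, the following minors hold by construction:
\[
\sigma_{\sum c_i}Y_0=\sigma_{mc}^2X_1^2,\qquad \sigma_{\sum c_i}Z=\sigma_{mc}X_1Y_1 .
\]
The minor $\sigma_{mc}X_1Z=Y_0Y_1$ then follows after multiplying by $\sigma_{\sum c_i}$, since the ring is torsion-free over $\ko_C$.

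For the last relations, $Y_1^2$ is a section of $\ke_4^+$, and $\ke_4^+$ is generated by the monomials of degree four in $X_0,X_1,Y_0$. Locally $Y_1^2$ is divisible by $T$ at each $c_i$, exactly as in Proposition~\ref{prop: alg model double fibre}. This gives a global $F_4$, a section of the Hom-bundle in the statement, with $Y_1^2=\sigma_{\sum c_i}F_4$. The remaining minors $Y_0F_4=Z^2$ and so on are then derived the same way, by multiplying by $\sigma_{\sum c_i}$ and using torsion-freeness.

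\textbf{Kernel and main obstacle.} Surjectivity, and the fact that the minors generate the whole kernel, are local statements on $C$. At a general point, $\sigma_{\sum c_i}$ is a unit, and the presentation reduces to $Y_1^2=(\text{unit})\cdot F_4$, which is the hypersurface of Theorem~\ref{thm:embed_no_double} with redundant generators eliminated. At $c_i$ it is Proposition~\ref{prop: alg model double fibre}.

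The points where $\sigma_{mc}$ vanishes need separate care. At such a point $c$, which is general and hence not a double fibre, $\sigma_{\sum c_i}$ is a unit, so $X_1$ is still recovered from the other generators. I therefore expect the main obstacle to be the bookkeeping at $c$: showing that the twisted generators $Y_0,Z$, with the factor $\sigma_{mc}$, still generate there and introduce no extra kernel. I would handle this by eliminating $Y_0$ and $Z$ using the unit $\sigma_{\sum c_i}$ and then comparing Hilbert series fibrewise.
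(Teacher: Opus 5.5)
Your proposal is correct and follows essentially the same route as the paper: lift the twisted quotients $\kb_0$ and $\kc_0$ into $\ke_2^+$ and $\ke_3^-$, obtain $\sigma_{\sum_i c_i}Y_0=\sigma_{mc}^2X_1^2$, $\sigma_{\sum_i c_i}Z=\sigma_{mc}X_1Y_1$ and $Y_1^2=\sigma_{\sum_i c_i}F_4$, derive the remaining minors by cancelling $\sigma_{\sum_i c_i}$ (torsion-freeness), and identify the kernel locally via Proposition~\ref{prop: alg model double fibre}. You are more careful than the paper in two places: you fix the lift so that these relations hold exactly, and you check the point $c$. Moreover, the splitting condition makes $X_1^2$ and $X_1Y_1$ vanish on every $2F_i$, so your maps $\sigma_{mc}^2X_1^2/\sigma_{\sum_i c_i}$ and $\sigma_{mc}X_1Y_1/\sigma_{\sum_i c_i}$ are already regular lifts for every $m\geq 0$ (genuine splittings for $m=0$), and the $H^1$-obstruction step is never actually needed.
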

\begin{proof} 
Let $\Delta = \sum_i c_i$ as a divisor on $C$. We will freely use the local model described in Proposition \ref{prop: alg model double fibre}.

 We denote the generator corresponding to  $\ka_i$ by $X_i$ (compare Section \ref{sect:wpb}) and note that by our assumption $X_1|_{F_i} = 0$ for all double fibres. By the local model, ${\ke_2^-}$ is a line bundle on $C$, and for the even part we obtain a diagram
 \begin{equation}\label{eq: deg 2 invariant}
  \begin{tikzcd}
  & 0\dar &0 \dar\\
   & \ka_0 \tensor \ke_1 \rar[equal] \dar& \ka_0 \tensor \ke_1\dar\\
   0 \rar& \Sym^2(\ka_0 \oplus \ka_1) \dar\rar & \ke_2^+ \dar{q_2}\rar & \bigoplus_{i=1}^k \IC_{c_i} \rar \dar[equal]& 0 \\
   0\rar &\ka_1^{\tensor 2}\rar\dar& \ke_2^+ /\left( \ka_0 \tensor \ke_1\right)\dar \rar & \bigoplus_{i=1}^k \IC_{c_i} \rar & 0 
   \\
   & 0 & 0 
  \end{tikzcd}.
 \end{equation}
so that in particular
\begin{equation} \label{eq: deg of quotient in deg 2} 
 \deg \ke_2^+ /\left( \ka_0 \tensor \ke_1\right) = 2 \deg \ka_1 + k
 \end{equation}

Again, by the local model, the image of $q_2$ is a line bundle which locally corresponds to the variable $Y_0$. To have this variable available globally, we need a splitting, which is controlled by 
\[ \Ext^1\left ( \ke_2^+ /\left( \ka_0 \tensor \ke_1\right), \left( \ka_0 \tensor \ke_1\right)\right) = H^1\left(\left( \ke_2^+ /\left( \ka_0 \tensor \ke_1\right)\right)^\vee\tensor  \left( \ka_0 \tensor \ke_1\right)\right).\]
While this splitting might not exist, we can choose $m$ large enough such that $H^1(\kb_0^\vee \tensor \ka_0 \tensor \ke_1) = 0 $, so the splitting exists for the subsheaf $\kb_0$.  Since $c$ is general, and not one of the points $c_i$, we get a surjection
$ \begin{tikzcd}\Sym^2(\ka_0 \oplus \ka_1) \oplus \kb_0 \rar[->>] & \left(\ke_2\right) ^+\end{tikzcd}$
and by the local model
a surjection 
\[\begin{tikzcd}\wSym^*_{(1,2)}(\ka_0 \oplus \ka_1; \kb_0)\rar[->>] & \bigoplus_{k\geq 0 }\ke_k ^+  =  \kr(f, \kl)^+  \end{tikzcd}\]
with only relation $\sigma_{2mc}X_1^2 = Y_0 \sigma_{\Delta}$.

Let us look at the anti-invariant sections in degree three. As above, the local model implies that there is a diagram
\begin{equation}\label{eq: deg 3 anti-invariant}
  \begin{tikzcd}
  & 0\dar &0 \dar\\
   & \ka_0 \tensor \ke_2^- \rar[equal] \dar& \ka_0 \tensor \ke_2^-\dar\\
   0 \rar& \ke_1\tensor \ke_2^- \dar\rar & \ke_3 ^- \dar\rar & \bigoplus_{i=1}^k \IC_{c_i} \rar \dar[equal]& 0 \\
   0\rar &\ka_1\tensor \ke_2^-\rar\dar& \ke_3 ^- / \ka_0 \tensor \ke_2^-  \dar \rar & \bigoplus _{i=1}^k \IC_{c_i} \rar & 0 
   \\
   & 0 & 0 
  \end{tikzcd},
 \end{equation}
 which shows that 
 \begin{equation} \label{eq: deg of quotient in deg 3} 
 \deg \ke_3^- /\left( \ka_0 \tensor \ke_2^-\right) =  \deg \ka_1+ \deg \ke_2^- + k.
 \end{equation}

The quotient locally corresponds to the variable $Z$, and again we seek a splitting of the middle column, which is controlled by 
\[ H^1\left( \left( \ke_3 ^-/ \ka_0 \tensor {\ke_2^-}\right)^\vee\tensor  \ka_0 \tensor {\ke_2^-}\right).\]
If this does not vanish, it certainly does after restriction to the subsheaf $\kc_0$ for sufficiently large $m$ as above. 

Comparing with the local model, we have constructed a surjection from the given weighted symmetric $\ko_C$-algebra onto the relative section ring and need to consider the relations.

Denoting by $Y_0$ the generator  of $\kb_0$, by $Y_1$ the generator  of $\ke_2^-$,  and by $Z$ the generator  of $\kc_0$, we have by construction the relation 
\[ Z\sigma_{\Delta} = X_1Y_1 \sigma_{mc}.\]
Also, by the local model, $Y_1$ vanishes along $F_i$ but not $2F_i$ for all $i$ and defines the double cover induced by the involution on the general fibre. Therefore, it satisfies a relation
\begin{equation}\label{eq: double cover relation}
 Y_1^2 = F_4(X_0, X_1, Y_0) \sigma_\Delta 
\end{equation}
for an invariant polynomial of relative degree four. Combining the three relations found so far,  we get
\begin{align*}
 Z^2 &  = X_1^2 \frac{ Y_1^2 \sigma_{mc}^2}{\sigma_\Delta^2} =\frac{Y_0\sigma_\Delta}{\sigma_{2mc}}\frac{ F_4 \sigma_\Delta \sigma_{mc}^2}{\sigma_\Delta^2} = Y_0F_4,\\
\sigma_{mc} X_1Z & =   \frac{ X_1Y_1 \sigma_{mc}}{ \sigma_{\Delta}} X_1=Y_1\frac{ X_1^2 \sigma_{mc}}{ \sigma_{\Delta}} = Y_1 Y_0.\\
\end{align*}
 These are exactly the five relations defined in the sheaf of ideals, which in each fibre restrict to the correct relations in the section ring. This concludes the proof. 
\end{proof}

\begin{rem}\label{rem:comparison}
For completeness, let us briefly show how one can compare the birational double cover model $\bar X$ from Theorem \ref{thm: birational double cover model} with the determinantal model from Theorem \ref{thm:embed_double}. With $\kn^\vee= \ke_2^-\left(-\sum_i c_i\right)$, according to Lemma \ref{lem: can bundle birat model}, we get a sequence of inclusions of graded $\ko_C$-algebras
	\[ 
    \wSym_{(1,2)}\left(\ke_1, \kn^\vee\right)\into \wSym_{(1,2)}\left(\ke_1, \ke^-_2\right) \into \wSym_{(1,2)}\left(\ke_1, \kb_0 \oplus \ke_2^-, \kc_0\right),
    \]
	which, using Remark \ref{rem: birational model in wp bundle}, gives us a commutative diagram
	\[
	\begin{tikzcd}
		X\dar[dashed]{\theta} \rar[hookrightarrow] &  \IP_{(1,2,3)}\left(\ke_1, \kb_0 \oplus \ke_2^-, \kc_0\right)\dar[dashed]\\
		\bar X \rar[hookrightarrow] & \IP_{(1,2)}\left(\ke_1, \kn^\vee\right)
	\end{tikzcd}.
	\]
If we are in the situation of Theorem \ref{thm: global determinantal models}, we can read of the equation defining $\bar X$ by introducing a new variable $Y_1'\coloneq \sigma_{\sum_i c_i} Y_1$:  after restricting \eqref{eq: double cover relation} to the general fibre by substituting $ \sigma_{\sum_i c_i}Y_0 = \sigma_{2mc}X_1^2 $ and then expanding, we get the relation
\[ {Y_1'}^2 =  \sigma_{\sum_i c_i}^3F_4\left( X_0, X_1, \frac{\sigma_{2mc}X_1^2}{ \sigma_{\sum_i c_i}}\right) = \sigma_{\sum_i c_i}X_1^4 + \sigma_{\sum_i c_i}^2\left(\dots\right)\]
and we can see that the branch divisor has the description given in Theorem \ref{thm: birational double cover model}. 
\end{rem}
\subsection{Splitting fibrations over $\IP^1$}\label{sect:split_base_p1}

We now specialise to the case where the base curve $C$ is $\mathbb{P}^1$. We write $\ka_0 \cong \ko_{\mathbb{P}^1}(-a_0)$, $\ka_1 \cong \ko_{\mathbb{P}^1}(-a_1)$, and $\ke_2^{-} \cong \ko_{\mathbb{P}^1}(-b_1)$. Set $b_0=2(a_1+m) - k$ and $c_0=a_1 + b_1 + m - k$, we consider the toric $5$-fold $\IT=\IT(a_0,a_1,b_1,m,k)$ with weight matrix 
\[
\begin{pmatrix}
 t_0 & t_1 & x_0 & x_1 & y_0 & y_1 & z \\
 1 & 1 & a_0 & a_1 & b_0 & b_1 & c_0 \\ 
 0 & 0 & 1 & 1 & 2 & 2 & 3
\end{pmatrix},
\]
and irrelevant ideal $I = (t_0, t_1) \cap (x_0, x_1, y_0 , y_1, z)$. We define a natural involution on $\IT$ that acts as $(t_0, t_1, x_0, x_1, y_0, y_1, z) \mapsto (t_0, t_1, x_0, x_1, y_0, -y_1, -z)$. Under this setup, and with notations as in Theorem \ref{thm: global determinantal models}, we can prove the following.

\begin{cor}\label{cor: global toric model}
Assume $f\colon S \to \mathbb{P}^1$ satisfies the splitting condition of Definition \ref{def: splitting} and has $k$ double fibres. Assume these double fibres lie over the points $c_i \in \mathbb{P}^1$ cut out by the homogeneous binary form $g(t_0, t_1) = \prod_{i=1}^k (t_0 - c_i t_1)$. 
Then, for any 
\[ m\geq \max\left\{ 0,\,  \frac{k-1 -2(a_1-a_0)}2,\,   k-1 +a_0-a_1\right\} \]

 (and possibly smaller $m$), the ample model $X$ is embedded into the toric variety $\IT$ and described by the equations generated by
\begin{equation}\label{eq: global determinantal matrix over P1}
\rk \begin{pmatrix}
 g & t_1^m x_1 & y_1 \\
 t_1^m x_1 & y_0 & z \\  
 y_1 & z & f_4 
\end{pmatrix} \leq 1,
\end{equation}
where $f_4$ is an invariant section of bidegree $(2b_1 - k, 4)$. 

Conversely, if the section $f_4$ is chosen sufficiently general, then the equations generated by \eqref{eq: global determinantal matrix over P1} define the ample model of a relatively minimal elliptic fibration $f \colon S \to \mathbb{P}^1$ with $k$ double fibres over the designated points. 

Assuming $q(S) = 0$, the numerical invariants of the ample model are given by
\[ 
\chi(\ko_X) = b_1 -a_0 -a_1\geq 1 \text{ and } p_g(X) =
b_1 -a_0 -a_1-1.
\]
In particular,
\[
\omega_X\cong \tilde f^*\ko_{\IP^1}(b_1 -a_0 -a_1-2)\otimes \ko_X\left(\tsum_{i=1}^k G_i\right).
\]

\end{cor}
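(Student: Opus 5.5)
The plan is to specialise Theorem \ref{thm: global determinantal models} to $C=\IP^1$ and translate each bundle into a twist $\ko_{\IP^1}(\cdot)$, so that the weighted symmetric algebra becomes the Cox ring of $\IT$. I will take the general point $c=(1:0)$, so that $\sigma_{mc}=t_1^m$ and $\sigma_{\sum c_i}=g$.

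\textbf{Degrees of the bundles.} From \eqref{eq: deg of quotient in deg 2} and \eqref{eq: deg of quotient in deg 3} the quotient $\ke_2^+/(\ka_0\tensor\ke_1)$ has degree $-2a_1+k$, and $\ke_3^-/(\ka_0\tensor \ke_2^-)$ has degree $-a_1-b_1+k$. Every line bundle on $\IP^1$ is $\ko(d)$, so we get $\kb_0=\ko(-b_0)$ and $\kc_0=\ko(-c_0)$, with $b_0=2(a_1+m)-k$ and $c_0=a_1+b_1+m-k$ as in the definition of $\IT$.

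\textbf{Range of $m$.} The two $H^1$ groups to kill are $H^1(\kb_0^\vee\tensor\ka_0\tensor\ke_1)$ and $H^1(\kc_0^\vee\tensor\ka_0\tensor\ke_2^-)$. The first is $H^1$ of $\ko(b_0-2a_0)\oplus\ko(b_0-a_0-a_1)$. The second is $H^1(\ko(c_0-a_0-b_1))$. Requiring all twists to be $\geq -1$ gives:
\begin{itemize}
\item $2a_1+2m-k-2a_0\geq -1$;
\item $a_1+2m-k-a_0\geq -1$;
\item $a_1+m-k-a_0\geq -1$.
\end{itemize}
These are exactly the stated bounds on $m$; the middle one is implied by the others once $m\geq 0$.

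\textbf{Transporting the ring.} Theorem \ref{thm: global determinantal models} then identifies $\kr(f,\kl)$ with the Cox-type ring of $\IT$ modulo the $2\times 2$ minors of \eqref{eq: global determinantal matrix over P1}. Taking $\Proj$ gives the embedding $X\subset\IT$. The bidegree of $f_4$ comes from homogeneity of the relation $Y_1^2=gF_4$: we need $2b_1=k+\deg f_4$, so $f_4$ has bidegree $(2b_1-k,4)$. The involution acts as stated by Lemma \ref{lem: section ring double fibre}(ii).

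\textbf{Converse.} For general $f_4$ I will argue locally on $\IP^1$.
\begin{itemize}
\item Over $t_1\neq 0$ and away from the $c_i$, we can solve $y_0=t_1^{2m}x_1^2/g$ and $z=t_1^mx_1y_1/g$. This leaves $y_1^2=gf_4$, a double cover of $\IP^1$-fibres branched in $4$ points, so the generic fibre has genus one.
\item Near each $c_i$ we have $t_1^m$ a unit, and the equations are exactly the local model of Proposition \ref{prop: alg model double fibre}. The central fibre is therefore the double structure $2G_i$, with reduced curve as in Lemma \ref{lem: section ring double fibre}.
\item Over $c=(1:0)$, where $t_1^m$ vanishes, a separate check is needed. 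I expect this to be the main obstacle: showing the fibre there is a non-multiple Kodaira fibre with only ADE singularities for general $f_4$. The approach is to eliminate using $g\neq 0$ and reduce to the hypersurface $y_1^2=gf_4(x_0,x_1,t_1^{2m}x_1^2/g)$, then apply a Bertini argument to the base-point-free part of the linear system, with Tables \ref{tab:types_fibres_1}–\ref{tab:types_fibres_2} controlling the fibre type.
\end{itemize}
Resolving the ADE singularities and taking the relative minimal model then produces $S$.

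\textbf{Invariants.} With $C=\IP^1$ and $q=0$, Lemma \ref{lem: can bundle birat model} gives $\tilde f_*\omega_{X/C}=\det\ke_1\tensor(\ke_2^-)^\vee=\ko(b_1-a_0-a_1)$. Proposition \ref{prop: numerical invariants} then gives $p_g=b_1-a_0-a_1-1$. Since $q=0$ forces $\tilde f_*\omega_{X/C}$ to be non-trivial, we get $\chi=b_1-a_0-a_1\geq 1$. Combining $\omega_X=\omega_{X/C}\tensor\tilde f^*\ko(-2)$ with \eqref{eq: can bundle formula} yields the stated formula for $\omega_X$.
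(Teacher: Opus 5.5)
For the forward direction and the invariants, your argument is the paper's. You specialise Theorem \ref{thm: global determinantal models} with $c=(1:0)$ and read off $b_0,c_0$ from \eqref{eq: deg of quotient in deg 2} and \eqref{eq: deg of quotient in deg 3}. You then require the twists in the two $H^1$ groups to be $\geq -1$; your three inequalities, and the redundancy of the middle one, are exactly as in the paper. The bidegree of $f_4$ comes from $y_1^2=gf_4$, and the invariants from Lemma \ref{lem: can bundle birat model} and Proposition \ref{prop: numerical invariants}. One small point: the reason for $\chi\geq 1$ is simply $\chi=1-q+p_g=1+p_g$. Non-triviality of $\tilde f_*\omega_{X/C}$ only tells you which case of the proposition applies.

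The paper's proof says nothing about the converse, and your sketch does not prove it either. The fibre over $(1:0)$ is also not where the difficulty lies.
\begin{itemize}
\item \emph{The fibre over $(1:0)$ is not special.} Since $g(1,0)\neq 0$, your elimination works on all of $\{g\neq 0\}$. Write $f_4=h_4+h_2y_0+h_0y_0^2$, with $h_j$ forms of degree $j$ in $x_0,x_1$ over $\IC[t_0,t_1]$. Then $(gy_1)^2=g\,B_0$ with $B_0=g^2h_4+g\,t_1^{2m}x_1^2h_2+t_1^{4m}x_1^4h_0$. So $X$ is birationally the double cover of $W=\IP(\ka_0\oplus\ka_1)$ branched along $B_0+\sum_i\Gamma_i$, and the fibre over $(1:0)$ is just one more fibre of this double cover.
\item \emph{The missing step is global.} For general $f_4$ you need $B_0$ to be reduced with only ADE singularities away from the points $p_i=\{g=x_1=0\}$. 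Over $t_1\neq 0$ you only checked the generic fibre.
\item \emph{Near $c_i$.} You cannot read Proposition \ref{prop: alg model double fibre} backwards, since it describes the ample model of a given surface. Instead, note that $B_0\equiv t_1^{4m}h_0x_1^4 \bmod g$. Hence $B_0\cap\Gamma_i=\{p_i\}$ with multiplicity $4$ if $h_0(c_i)\neq 0$. In local coordinates $(T,X_1)$ at $p_i$, $B_0=aT^2+bTX_1^2+cX_1^4+\dots$, which is an $A_{n+3}$ point whenever $ac\neq 0$ and $B_0$ is reduced. Then Construction \ref{construction deg 2} and Lemma \ref{lem: double fibre} give the double fibres.
\item \emph{Bertini alone cannot suffice for arbitrary numerical data.} If degree constraints force $h_4=0$, then $x_1^2\mid B_0$ and $X$ is not normal. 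So the converse has to be read for data realised by some surface $S$. In that case the forward direction supplies one admissible $f_4$, the conditions above are open in the vector space of $f_4$'s, and a general $f_4$ satisfies them.
\end{itemize}
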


\begin{proof}
	Theorem \ref{thm: global determinantal models} yields an embedding of the relative section ring into a split weighted projective bundle over $\mathbb{P}^1$, which corresponds precisely to the toric variety $\IT$. 
	Since we are on $\IP^1$, the cohomology vanishings that guarantee the splitting in \eqref{eq: deg 2 invariant} and \eqref{eq: deg 3 anti-invariant} are controlled numerically; that is, we need the degrees of the relevant line bundles to be at least $-1$. 
	Using \eqref{eq: deg of quotient in deg 3}, we see that the condition in degree three is $m\geq k-1 +a_0-a_1$. In degree two, we get the conditions 
	$2m \geq k-1 -2(a_1-a_0)$ and $2m \geq  k-1 +a_0-a_1$ from \eqref{eq: deg of quotient in deg 2}. And since, $m\geq 0$, we obtain the constraint
    \begin{align*}
    m&\geq \max\left\{ 0,\,  \frac{k-1 -2(a_1-a_0)}2,\,   k-1 +a_0-a_1,\frac{k-1+a_0-a_1}{2}\right\} \\
    &=  \max\left\{ 0,\,  \frac{k-1 -2(a_1-a_0)}2,\,   k-1 +a_0-a_1\right\}.
    \end{align*}
	To compute the invariants, we recall from Lemma \ref{lem: can bundle birat model} that 
	\[
    \tilde f_*\omega_{X/\IP^1} =  \det \ke_1 \tensor\left( \ke_2^-\right)^\vee = \ko_{\IP^1}(b_1 -a_0 -a_1)
    \]
    
	and the stated formulas follow from the ones in Proposition \ref{prop: numerical invariants}.
\end{proof}

\begin{rem}\label{rem: Splitting condition one double fibre over genus zero}
If $f\colon S\to \mathbb{P}^1$ has at most one multiple fibre (i.e., $k \leq 1$), then the splitting condition of Definition \ref{def: splitting} is always satisfied. Indeed, by Grothendieck's theorem, the rank-two vector bundle decomposes as $\ke_1 \cong \ko_{\mathbb{P}^1}(-a_0) \oplus \ko_{\mathbb{P}^1}(-a_1)$. Without loss of generality, we can assume $-a_0 \leq -a_1$, which implies $a_0 - a_1 \geq 0$.

If there is no multiple fibre ($k=0$), the condition is vacuously satisfied. If there is exactly one multiple fibre over a point $c_1 \in \mathbb{P}^1$, any existing decomposition can be modified by an element of $\text{Hom}(\ko_{\mathbb{P}^1}(-a_0),\, \ko_{\mathbb{P}^1}(-a_1)) \cong H^0(\ko_{\mathbb{P}^1}(a_0 - a_1))$. Since this vector space is non-trivial, we can always choose a global section whose vanishing locus passes precisely through $c_1$, thereby satisfying Definition \ref{def: splitting}.
\end{rem}

\begin{rem}
Note that if there are no multiple fibres, then $k = 0$, $m = 0$, and $g(t_0,t_1) \equiv 1$. This allows us to explicitly eliminate the variables $y_0$ and $z$ from the determinantal equations because $y_0 = x_1^2$ and $z = x_1y_1$. Consequently, $X$ embeds as a relative hypersurface into a toric $3$-fold defined by a weight matrix
\begin{equation}
\begin{pmatrix}
 t_0 & t_1 & x_0 & x_1 & y_1 \\
 1 & 1 & a_0 & a_1 & b_1 \\  
 0 & 0 & 1 & 1 & 2 
\end{pmatrix},
\end{equation}
and is cut out by a single global equation of the form $y_1^2 - f_4(t_0, t_1, x_0, x_1) = 0$, where $f_4$ is an invariant polynomial of bidegree $(2b_1, 4)$.
\end{rem}

\section{A miscellany of examples}\label{sect:examples}
We now present a miscellany of examples that illustrate the scope and limitations of the relative determinantal models, including an explicit description for Halphen surfaces of index two. Enriques surfaces are considered separately in the next section.  

Examples of  birational double cover models can already be found in the literature \cite[Section~4]{horikawa}, 
\cite[Section~3.1.1]{do-rollenske22}, \cite[Section~2]{MZ},  \cite[Example 3.1]{ST} .

\subsection{Construction starting from an arbitrary ruled surface}\label{construction over arbitrary ruled surface}

Let $C$ be a smooth projective curve and let $\ke$ be any vector bundle of rank two, giving rise to a ruled surface $\alpha \colon W = \IP(\ke) \to C$.
Consider distinct fibres $\Gamma_1, \ldots, \Gamma_k$ and in each fibre $\Gamma_i$ pick a point $q_i$. Pick one more point $q_0\in  W \setminus \{q_1, \dots q_k\}$. Then consider the blow-up $ \Phi \colon \tilde  W \to  W $ at all these points and let $E_i$ be the exceptional curve over $q_i$. 

Now, on $C$ we consider a line bundle $\kn$ such that on $W$ we have 
\[
 \kb  \coloneq \alpha^*\kn^{\tensor2}(4) = \alpha^*\left ( \omega_C \tensor \det \ke \tensor \kl'\right) \tensor \ko_{ W }(4) = \omega _{ W } \tensor \ko_{ W }(6) \tensor \alpha^*\kl'
\]
where $\kl'$ is a sufficiently positive line bundle on $C$ to be chosen later. 
Then on the blow-up $\tilde  W$, the line bundle 
\[ 
\Phi^*\left(\ko_{ W }(6) \tensor \alpha^*\kl'\right)   \left(-2E_0 - 6\tsum_{i=1}^r E_i\right)
\]
is positive on all vertical curves. For $\kl'$ sufficiently positive, it is also positive on all horizontal curves and has positive square, hence it is big and nef. Therefore,
\[ \Phi^*\kb  \left(-E_0 - 5\tsum_{i=1}^r E_i\right) = \omega_{\tilde W } \tensor \Phi^*\left(\ko_{ W }(6) \tensor \alpha\kl'\right)   \left(-2E_0 - 6\tsum_{i=1}^r E_i\right)\]
has no higher cohomology independently of the choice of the point $q_0$.

 Pushing forward to $ W $, we interpret this as follows:
denoting with $\ki$ the ideal sheaf of the points $q_1, \dots q_k$, the restriction maps
\[ H^0(W, \kb) \to H^0 (W, \kb / \ki^5 \kb) \text{ and }  H^0 (W,  \ki^5 \kb) \to H^0 (W,  \ki^5 \kb\tensor \IC_{q_0})\]
are both surjective. Thus, we can prescribe any $4$-jet at the points $q_1, \dots, q_k$ and find a section $s$ of $\kb$ realising this $4$-jet. Moreover, the linear subsystem $s+ |\ki^5 \kb|$ has no base point at $q_0$, hence it is base-point-free on $W\setminus \{q_1, \dots, q_k\}$.  In particular, by Bertini, we can find a curve $B'$ in $|\kb|$ with the prescribed singularities at $q_1, \dots, q_k$ and smooth otherwise. In particular, using the branch divisor $B'+\sum_{i=1}^r \Gamma_i$  as in Construction \ref{construction deg 2}, we get an elliptic fibration with double fibres exactly over the points $c_i = \alpha(q_i)$ and $\ke_1 = \ke$.

\subsection{Examples not verifying the splitting condition}
We noted in Remark \ref{rem: Splitting condition one double fibre over genus zero} that the splitting condition is always verified if there is at most one double fibre and the base curve is $\IP^1$.  The following examples show that this is the only instance where we can guarantee it. 

\begin{exam}
 The bundle $\ke_1$ does not need to be split in general if the base curve $C$ is not $\IP^1$. An explicit example can be constructed as follows. Let $C$ be any curve of genus $g>0$ and consider any non-split bundle $\ke$ of rank two on $C$, for example corresponding to a class $0\neq \eta \in H^1(C, \ko_C) = \Ext^1_{\ko_C}(\ko_C, \ko_C)$. 
 Then, by Section \ref{construction over arbitrary ruled surface}, there exists a marked elliptic surface $(f\colon S\to C, \kl)$ with $\ke_1= \ke$ and any desired number of double fibres. 
\end{exam}

\begin{exam}\label{ex:genus1}
 Let $C$ be an elliptic curve and let $\ke = \ko_C(p) \oplus \ko (q)$ with $p\neq q$. Note that while $\ke$ is split, the splitting is unique, because $\Hom( \ko_C(p), \ko_C(q)) = 0 $. Now, by Section \ref{construction over arbitrary ruled surface}, there exists a marked elliptic surface $f\colon (S, \kl) \to C$ with $f_* \kl = \ke$ and one double fibre. Moreover, going back into that construction, the position of the singularity of the branch locus resulting in the double fibre can be chosen arbitrarily, so that we can ensure that neither of the two sub line bundles satisfies the condition in Definition \ref{def: splitting}. 
\end{exam}

\begin{exam}
 Here is yet another example that still admits a quite general global description despite not satisfying our splitting condition (Definition \ref{def: splitting}). Consider the toric variety with weight matrix given by 
\[ 
\mat{ t_0 & t_1 & x_0 & x_1 & y_0 & y_1 & z\\ 1 & 1 & 0 & 0 & b_1 & b_2 & c\\ 0 
	& 0 & 1 & 1& 2& 2& 3}.
    \]
and irrelevant ideal $I = (t_0, t_1) \cap (x_0, x_1, y_0, y_1, z)$. Inside this toric variety, consider the surface cut out by equations
	\[	 \rk 
	 \mat{
	 	 t_0t_1  & t_1 x_0 + t_0 x_1 & y_1\\
	 	t_1 x_0 + t_0 x_1 & y_0 & z \\ 
	 	  y_1 & z & f_4 } 
	 \leq 1
	\]
	for a suitably chosen $f_4(t_0, t_1, x_0, x_1, y_0)$. Then over both subsets $t_0\neq0$ and $t_1\neq 0$ we have the standard determinantal description from Section \ref{sect:determinantal}, but no trivialisation satisfies the splitting condition. 
\end{exam}

\subsection{Halphen surfaces of index two}\label{sec:Halphen}

We will now describe the determinantal model of a Halphen surface of index two for two convenient choices of relative polarisation. A Halphen surface of index two is a rational elliptic surface $f \colon S \to \IP^1$ possessing a unique double fibre $2F_1$ ($k=1$). Since $S$ is rational, we have $\chi(\ko_S) = 1$, $p_g(S) = 0$, and $q(S) = 0$. Moreover, by the canonical bundle formula for elliptic surfaces \cite[Chapter V, Theorem 12.1]{BHPV} (see also (\ref{eq: can bundle formula})), the canonical divisor is linearly equivalent to $K_S \sim -F_1$. 

We fix $f \colon S \to \IP^1$ and a $(-1)$-curve $D \subset S$, which is thus a bisection of $F$ and satisfies $p_a(D) = 0$, and $D \cdot F_1 = 1$. We consider two natural choices of relative polarisations of relative degree two associated with $D$, namely $\ko_S(D)$ and $\ko_S(D-K_S)$.

\subsubsection{The choice of marking $\kl = \ko_S(D)$}\label{subsec:case_I}

We first analyse the case where the relative polarisation is chosen to be $\kl = \ko_S(D)$. We begin with the following preliminary observation.

\begin{lem}\label{lem:Halphen_pushforward_I}
	In this setup, for any integer $n \geq 1$, the direct images on the base curve $C=\mathbb{P}^1$ satisfy
	\[ 
    h^0(\mathbb{P}^1, f_* \mathcal{L}^{\otimes n}) = 1 \quad \text{and} \quad h^1(\mathbb{P}^1, f_*\mathcal{L}^{\otimes n}) = \frac{1}{2}(n^2 - n).
    \]
\end{lem}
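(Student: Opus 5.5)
The plan is to compute everything on $S$ and transfer to $\IP^1$ via the Leray spectral sequence.

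\textbf{Step 1: vanishing of $R^1f_*$.} Since $\kl^{\tensor n}$ has degree $2n>0$ on every fibre and is nef on fibre components, Lemma~\ref{lem: linear systems on Kodaira fibres} together with its analogue for the double fibre gives $H^1(S_c,\kl^{\tensor n}|_{S_c})=0$ for all $c$. For the double fibre we use the decomposition of Lemma~\ref{lem:torsion}: both $\kl^{\tensor n}|_F$ and $\kl^{\tensor n}|_F(-F)$ have positive degree on the genus-one curve $F$ (respectively on each cycle). Hence $R^1f_*\kl^{\tensor n}=0$, and therefore $H^i(\IP^1,\ke_n)=H^i(S,\kl^{\tensor n})$ for $i=0,1$.

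\textbf{Step 2: Riemann--Roch on $S$.} We have $\chi(\ko_S)=1$, $K_S\sim -F_1$, $D^2=-1$ and $D\cdot K_S=-1$, with $D\cdot F_1=1$. Riemann--Roch gives
\[
\chi(nD)=1+\tfrac12\bigl(n^2D^2-nD\cdot K_S\bigr)=1+\tfrac12(-n^2+n).
\]
It therefore suffices to show that $h^0(nD)=1$ and $h^2(nD)=0$; then $h^1(nD)=1-\chi(nD)=\tfrac12(n^2-n)$. Alternatively, one could compute on $\IP^1$ using $\deg\ke_n$ and rank $2n$, but the surface computation is cleaner.

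\textbf{Step 3: $h^2=0$ and $h^0=1$.} By Serre duality, $h^2(nD)=h^0(K_S-nD)=h^0(-F_1-nD)=0$, since $-F_1-nD$ has negative degree against an ample class (or simply since $F_1+nD$ is a nonzero effective divisor). For $h^0$, $D$ is effective, so $h^0(nD)\ge1$. Conversely, $D$ is an irreducible curve with $D\cdot nD=-n<0$, so $D$ is a fixed component of $|nD|$. Hence $h^0(nD)=h^0((n-1)D)$, and by induction this equals $h^0(\ko_S)=1$.

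The only delicate point is Step 1 on the double fibre, namely verifying $H^1(2F_1,\kl^{\tensor n}|_{2F_1})=0$. This follows from Lemma~\ref{lem:torsion}, since both summands are line bundles of degree $2n$ on $F_1$, which has arithmetic genus one. It is also already implicit in the statement that $\ke_n$ is a vector bundle of rank $2n$.
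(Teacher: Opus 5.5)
Your proposal is correct and follows the paper's proof: Riemann--Roch on $S$ with $D^2=D\cdot K_S=-1$, combined with $h^0(nD)=1$ and $h^0(K_S-nD)=0$, after identifying $H^i(\IP^1,\ke_n)$ with $H^i(S,nD)$. You additionally justify that identification via $R^1f_*\kl^{\otimes n}=0$, and $h^0(nD)=1$ via $D$ being a fixed component, both of which the paper only asserts. One small slip: since $D\cdot F_1=1$, the summands $\kl^{\otimes n}|_{F_1}$ and $\kl^{\otimes n}|_{F_1}(-F_1)$ have degree $n$ (not $2n$) on $F_1$, but positivity is all your argument needs.
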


\begin{proof}
	First, we observe that $H^i(\mathbb{P}^1, f_*\mathcal{L}^{\otimes n}) \cong H^i(S, \ko_S(nD))$. Next, Riemann-Roch combined with Serre duality yields:
	\[ 
    h^0(S, nD) - h^1(S, nD) + h^0(S, -nD + K_S) = \frac{1}{2}nD \cdot (nD - K_S) + 1. 
    \]
	Since $D^2=-1$ and $D \cdot K_S = -D \cdot F_1 = -1$, the right-hand side reduces to $1 - \frac{1}{2}(n^2 - n)$. Now, for $n \geq 1$, $h^0(S, nD) = 1$  and $h^0(S, -nD + K_S) = 0$, yielding the desired formulas.
\end{proof}

Next, we set the coordinates on the base $\mathbb{P}^1$ such that the unique double fibre is over the point $c_1 = (0:1)$, cut out by $g(t_0, t_1) = t_0$. 

From Lemma~\ref{lem:Halphen_pushforward_I}, we see that $\ke_1 = f_*\kl$ has $h^0(\IP^1, \ke_1) = 1$ and $h^1(\IP^1, \ke_1) = 0$. Therefore, $\ke_1 \cong \ko_{\IP^1} \oplus \ko_{\IP^1}(-1)$. The trivial factor corresponds to the section defining $D$, which therefore does not vanish along the double fibre. As seen in Remark \ref{rem: Splitting condition one double fibre over genus zero}, the splitting condition is thus satisfied with $a_0=0$ and $a_1=1$. 

 From Lemma~\ref{lem:Halphen_pushforward_I}, we also have $h^1(\mathbb{P}^1, \mathcal{E}_2) = 1$, which is contributed entirely by the anti-invariant part $\mathcal{E}_2^-$ since   $\text{Sym}^2 \mathcal{E}_1 = \ko_{\mathbb{P}^1} \oplus \ko_{\mathbb{P}^1}(-1) \oplus \ko_{\mathbb{P}^1}(-2)$ and we have an exact sequence
\[
0 \longrightarrow \text{Sym}^2 \ke_1 \longrightarrow \ke_2^+ \longrightarrow \mathbb{C}_0 \longrightarrow 0.
\]
Thus,  $\ke_2^- \cong \ko_{\mathbb{P}^1}(-2)$, so $b_1 = 2$. In fact, we can alternatively deduce $b_1=2$ from the formulas for the invariants of $X$ (hence of $S$) from Corollary \ref{cor: global toric model}, which also shows that there is a global determinantal model with $m = 0$.
In particular, the missing numerical parameters for the toric variety $\IT=\IT(a_0, a_1, b_1, m, k) = \IT(0, 1, 2, 1, 1)$ are $b_0 = 2(a_1 + m) - k = 2(1 + 0) - 1 = 1$ and $c_0 = a_1 + b_1 + m - k = 1 + 2 + 0 - 1 = 2$. That is, the ambient toric $5$-fold we are interested in is defined by the weight matrix
\begin{equation}\label{eq:Halphen_Cox_I}
\begin{pmatrix}
 t_0 & t_1 & x_0 & x_1 & y_0 & y_1 & z \\ 
 1 & 1 & 0 & 1 & 1 & 2 & 2 \\ 
 0 & 0 & 1 & 1 & 2 & 2 & 3
\end{pmatrix}
\end{equation}

We have proved the following.

\begin{prop}\label{prop: RES_I}
	The ample model embedding $\varphi\colon S \dashrightarrow X \hookrightarrow \IT$ of a Halphen surface of index two with polarisation $\mathcal{L} = \ko_S(D)$ is defined globally by
	\begin{equation}\label{eq:Halphen_matrix_I}
	\text{rk} \begin{pmatrix} 
	t_0 &  x_1 & y_1 \\ 
	 x_1 & y_0 & z \\ 
	y_1 & z & F_4 
	\end{pmatrix} \leq 1
	\end{equation}
	where $\IT$ has Cox ring determined by (\ref{eq:Halphen_Cox_I}) and $F_4(t_0, t_1, x_0, x_1, y_0)$ has bidegree $(2, 4)$.
\end{prop}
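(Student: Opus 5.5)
The plan is to specialise Corollary \ref{cor: global toric model} to the parameters computed just before the statement, so that the only real work is checking its hypotheses and the vanishing that allows $m=0$.

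\emph{Splitting data.} First I fix the splitting data. By Lemma~\ref{lem:Halphen_pushforward_I}, $h^0(\ke_1)=1$ and $h^1(\ke_1)=0$. Since $\ke_1$ has rank two and, by the Hilbert series in Lemma~\ref{lem: section ring double fibre}, fibre degree matching $\deg\ke_1=-1$, this forces $\ke_1\cong\ko_{\IP^1}\oplus\ko_{\IP^1}(-1)$. The summand $\ka_0=\ko_{\IP^1}$ is spanned by the section cutting out $D$. Because $D\cdot F_1=1$ and $D$ does not contain $F_1$, that section does not vanish on $F_1$. By Remark~\ref{rem: Splitting condition one double fibre over genus zero}, the complementary summand $\ka_1=\ko_{\IP^1}(-1)$ can be chosen with its generator vanishing at $c_1=(0:1)$. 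Hence the splitting condition of Definition~\ref{def: splitting} holds with $a_0=0$, $a_1=1$, $k=1$ and $g=t_0$.

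\emph{The bundle $\ke_2^-$.} Next I compute $\ke_2^-$. From $0\to\Sym^2\ke_1\to\ke_2^+\to\IC_0\to 0$ we get $\deg\ke_2^+=-3+1=-2$ and $h^1(\ke_2^+)=0$. Lemma~\ref{lem:Halphen_pushforward_I} with $n=2$ gives $h^1(\ke_2)=1$, so $h^1(\ke_2^-)=1$ and $\ke_2^-\cong\ko_{\IP^1}(-2)$, i.e.\ $b_1=2$. As a cross-check, the invariants in Corollary~\ref{cor: global toric model} give $\chi(\ko_X)=b_1-a_0-a_1=1$, which agrees with $S$ being rational.

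\emph{Choice of $m$.} The main point to verify is that $m=0$ is allowed. The bound in Corollary~\ref{cor: global toric model} is
\[
\max\left\{0,\ \tfrac{k-1-2(a_1-a_0)}{2},\ k-1+a_0-a_1\right\}=\max\{0,-1,-1\}=0.
\]
To be safe, I will also check directly the two $H^1$-vanishings of Theorem~\ref{thm: global determinantal models}, using \eqref{eq: deg of quotient in deg 2} and \eqref{eq: deg of quotient in deg 3}.
\begin{itemize}
\item In degree two, $\ke_2^+/(\ka_0\otimes\ke_1)$ has degree $2a_1\cdot(-1)+k=-1$. It is torsion free, hence $\ko(-1)$, and the relevant group is $H^1(\ko(1)\otimes(\ko\oplus\ko(-1)))=0$.
\item In degree three, $\ke_3^-/(\ka_0\otimes\ke_2^-)$ has degree $-1-2+1=-2$, and the relevant group is $H^1(\ko(2)\otimes\ko(-2))=0$.
\end{itemize}
Thus $b_0=2(a_1+m)-k=1$ and $c_0=a_1+b_1+m-k=2$, which recovers the weight matrix \eqref{eq:Halphen_Cox_I}.

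\emph{Conclusion.} Finally I apply Corollary~\ref{cor: global toric model} with $m=0$ and $g=t_0$. The factor $t_1^m$ becomes $1$, so the matrix \eqref{eq: global determinantal matrix over P1} becomes exactly \eqref{eq:Halphen_matrix_I}. The entry $F_4$ is the invariant polynomial supplied by the relation $Y_1^2=F_4\,\sigma_\Delta$ of Theorem~\ref{thm: global determinantal models}. It involves only the invariant variables $t_0,t_1,x_0,x_1,y_0$, and its bidegree is fixed by homogeneity of the $2\times 2$ minors in the Cox grading \eqref{eq:Halphen_Cox_I}. I expect this degree bookkeeping, namely making the grading conventions for $F_4$ consistent with the Corollary's formula $(2b_1-k,4)$ and with the stated bidegree $(2,4)$, to be the step needing the most care; the rest is direct substitution.
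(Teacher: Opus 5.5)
Your route is the paper's own. The proposition is obtained there by computing $a_0=0$, $a_1=1$, $k=1$, $b_1=2$, observing that $m=0$ is allowed, and specialising Corollary~\ref{cor: global toric model}.

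\textbf{The bidegree of $F_4$.} The one step you defer cannot be closed the way you hope, and it is not a matter of grading conventions. In the Cox grading \eqref{eq:Halphen_Cox_I}, the minor $t_0F_4-y_1^2$ must be homogeneous. Hence $\deg F_4=2\deg y_1-\deg t_0=(4,4)-(1,0)=(3,4)$. The minors $y_0F_4-z^2$ and $x_1F_4-y_1z$ give the same value, and it is exactly $(2b_1-k,4)$ from the Corollary. More generally, with $k=1$ the first entry $2b_1-1$ is odd for every $b_1$, so $(2,4)$ is impossible. The ``$(2,4)$'' in the statement is a misprint; compare Proposition~\ref{prop: non-special Enriques}, where $b_1=2$ and $k=2$ really do give $(2,4)$. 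What your argument, like the paper's, establishes is the proposition with $F_4$ of bidegree $(3,4)$. You should conclude that explicitly rather than leave the reconciliation open.

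\textbf{Smaller repairs.} Three justifications need fixing.
\begin{itemize}
\item The equality $\deg\ke_1=-1$ has nothing to do with the Hilbert series in Lemma~\ref{lem: section ring double fibre}, which concerns a single fibre. It is Riemann--Roch on $\IP^1$: $\chi(\ke_1)=\deg\ke_1+2=1$. Then $h^1(\ke_1)=0$ forces the splitting type $(0,-1)$.
\item The vanishing $h^1(\ke_2^+)=0$ does not follow from $0\to\Sym^2\ke_1\to\ke_2^+\to\IC_0\to0$ alone, because $h^1(\Sym^2\ke_1)=h^1(\ko_{\IP^1}(-2))=1$. You also need $h^0(\ke_2)=1$ from Lemma~\ref{lem:Halphen_pushforward_I}. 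Alternatively, bypass $h^1$ entirely: $\deg\ke_2=\chi(\ke_2)-4=-4$, so the line bundle $\ke_2^-$ has degree $-4-(-2)=-2$. Or take your check $\chi(\ko_X)=b_1-a_0-a_1=1$ as the actual derivation of $b_1=2$.
\item The splitting condition asks that $\ka_1\subset f_*\kl(-F_1)$, i.e.\ that the generator vanish along the reduced curve $F_1$. It does not ask that the section of $\ke_1$ vanish at $c_1$, which would mean vanishing along all of $2F_1$. This is arranged by modifying the $\ko_{\IP^1}(-1)$ summand by an element of $H^0(\ko_{\IP^1}(1))$, so that its fibre at $c_1$ lies in the one-dimensional kernel of $H^0(2F_1,\kl|_{2F_1})\to H^0(F_1,\kl|_{F_1})$.
\end{itemize}
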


\subsubsection{The choice of marking $\kl = \ko_S(D - K_S)$}\label {subsec:case_II}

We now turn to the second choice of relative polarisation. We choose $\kl = \ko_S(D - K_S)= \ko_S(D + F_1)$ and, again, we begin with the following calculation, whose proof we omit since the argument is the same as the one in the proof of Lemma \ref{lem:Halphen_pushforward_I}.

\begin{lem}\label{lem:Halphen_pushforward_II}
	In this setup, for any integer $n \geq 1$, the direct images on the base curve $C = \mathbb{P}^1$ satisfy
	\[
	h^0(\mathbb{P}^1, f_* \mathcal{L}^{\otimes n}) =\frac{1}{2}(n^2+n)+1  \quad \text{and} \quad h^1(\mathbb{P}^1, f_*\mathcal{L}^{\otimes n}) = 0.
	\]
\end{lem}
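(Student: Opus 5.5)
The plan is to copy the proof of Lemma~\ref{lem:Halphen_pushforward_I}, with $D$ replaced by $D+F_1 = D-K_S$. First, since $R^1f_*\kl^{\tensor n}=0$ (the restriction of $\kl^{\tensor n}$ to every fibre has positive degree, so it has no higher cohomology), the Leray spectral sequence gives
\[
H^i(\IP^1, f_*\kl^{\tensor n}) \cong H^i(S, \ko_S(nD - nK_S)).
\]
So it suffices to compute $h^0$ and $h^1$ of $\ko_S(n(D-K_S))$ on the rational surface $S$.

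Next, apply Riemann--Roch, using $\chi(\ko_S)=1$, $D^2=-1$, $D\cdot K_S=-1$ and $K_S^2=F_1^2=0$. We get $(D-K_S)^2 = -1+2+0 = 1$ and $(D-K_S)\cdot K_S = -1$. Hence
\[
\chi(nD-nK_S) = \tfrac12\, n(D-K_S)\cdot\bigl(n(D-K_S)-K_S\bigr) + 1 = \tfrac12 (n^2+n) + 1.
\]
By Serre duality, $h^2(nD-nK_S) = h^0((n+1)K_S - nD) = 0$. Indeed, $(n+1)K_S-nD$ has negative intersection with the nef class $F_1$: $F_1\cdot K_S = 0$ and $F_1\cdot D = 1$, so the intersection is $-n<0$.

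The real content is the vanishing $h^1(S, n(D-K_S))=0$ for $n\geq 1$. Here I would use Kawamata--Viehweg or Ramanujam vanishing in the form $H^1(K_S+M)=0$ for $M$ nef and big. Write
\[
n(D-K_S) = K_S + M, \qquad M = nD - (n+1)K_S = nD + (n+1)F_1.
\]
To see that $M$ is nef, it is enough to test on curves with $M\cdot C<0$, and such curves must be components of $M$. We have $M\cdot F_1 = n \geq 0$, and
\[
M\cdot D = -n + (n+1) = 1 > 0.
\]
So $M$ is nef. Moreover $M^2 = -n^2 + 2n(n+1) = n^2+2n > 0$, so $M$ is big. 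Vanishing then gives $h^1=0$, and therefore $h^0 = \chi = \tfrac12(n^2+n)+1$.

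The main obstacle I expect is justifying nefness cleanly, since $D$ itself is not nef. The computation $M\cdot D=1$ handles the only negative-curve component. If vanishing for big and nef divisors were unavailable, I would instead use the restriction sequences $0\to \ko_S(nD+jF_1)\to \ko_S(nD+(j+1)F_1)\to \ko_{F_1}(\ldots)\to 0$ and fibre-by-fibre pushforward arguments, but the vanishing route is shorter.
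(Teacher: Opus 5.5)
Your argument is correct. It follows the template the paper invokes via Lemma~\ref{lem:Halphen_pushforward_I}: pass to $S$ by Leray, apply Riemann--Roch, and use Serre duality to kill $h^2$. The decisive step, however, is different. There the paper gets $h^1$ from the direct count $h^0(nD)=1$, which is immediate for a $(-1)$-curve but has no equally direct analogue for $n(D+F_1)$. You instead prove $h^1(n(D+F_1))=0$ by Kawamata--Viehweg (Ramanujam) vanishing applied to the nef and big divisor $M=nD+(n+1)F_1$, and then read off $h^0=\chi$. This supplies exactly the input that the paper's ``same argument'' leaves implicit.

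Two small precisions. First, if $F_1$ is reducible (type $I_k$ with $k\geq 2$), nefness must be checked on each component $\Theta$ of $F_1$. There $M\cdot\Theta = nD\cdot\Theta\geq 0$, because $F_1\cdot\Theta=0$ and $D$ is horizontal. Second, the vanishing of $R^1f_*\mathcal{L}^{\otimes n}$ rests on $\mathcal{L}=\mathcal{O}_S(D+F_1)$ being $f$-nef of positive fibre degree, not on positive total degree alone. This includes the double fibre $2F_1$, which is handled via Lemma~\ref{lem:torsion}.
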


From Lemma~\ref{lem:Halphen_pushforward_II}, $h^0(\mathbb{P}^1, \ke_1) = 2$ and $h^1(\mathbb{P}^1, \ke_1) = 0$. Consequently, $\ke_1 \cong \ko_{\IP^1}\oplus \ko_{\IP^1}$, meaning $a_0 = a_1 =0 $. Thus, $\ke_2^- \cong \ko_{\IP^1}(-1)$, i.e., $b_1 = 1$. 

By Corollary \ref{cor: global toric model}  we can construct the global determinantal model with $m=0$ and $b_0 = -1$ and  $c_0 = 0$. That is, the $5$-fold $\IT$ in this case has weight matrix
\begin{equation}
\begin{pmatrix}
 t_0 & t_1 & x_0 & x_1 & y_0 & y_1 & z \\ 
 1 & 1 & 0 & 0 & -1 & 1 & 0 \\ 
 0 & 0 & 1 & 1 & 2 & 2 & 3
\end{pmatrix}
\end{equation}
and irrelevant ideal $I = (t_0, t_1) \cap (x_0, x_1, y_0, y_1, z)$. We then have the following description.

\begin{prop}\label{prop: RES_II}
The ample model embedding $\varphi\colon S \dashrightarrow X \hookrightarrow \IT$ of a Halphen surface of index two with polarisation $\mathcal{L} = \ko_S(D-K_S)$ is defined globally by
\begin{equation}\label{eq:Halphen_matrix_II}
	\text{rk} \begin{pmatrix} 
	t_0 &  x_1 & y_1 \\ 
	 x_1 & y_0 & z \\ 
	y_1 & z & F_4 
	\end{pmatrix} \leq 1
	\end{equation}
where $F_4(t_0, t_1, x_0, x_1, y_0)$ has bidegree $(1, 4)$.
\end{prop}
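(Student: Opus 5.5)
The plan is to proceed exactly as for Proposition~\ref{prop: RES_I}: identify the numerical parameters $(a_0,a_1,b_1,m,k)$ and then apply Corollary~\ref{cor: global toric model}. We have $k=1$, and Remark~\ref{rem: Splitting condition one double fibre over genus zero} gives the splitting condition. From Lemma~\ref{lem:Halphen_pushforward_II} we get $h^0(\ke_1)=2$ and $h^1(\ke_1)=0$, so $\ke_1\cong\ko_{\IP^1}^{\oplus 2}$ and $a_0=a_1=0$. We then pick the splitting with $\ka_1$ generated by a section vanishing on $F_1$: the section of $\kl=\ko_S(D+F_1)$ defining $D+F_1$ is a global section of $\ke_1$ whose restriction to $F_1$ vanishes. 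This is also the concrete choice made in the remark.

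To determine $b_1$, there are two routes. The first uses Corollary~\ref{cor: global toric model}: since $\chi(\ko_S)=1$ and $q=0$, we get $1=b_1-a_0-a_1$, so $b_1=1$. As a cross-check, Lemma~\ref{lem:Halphen_pushforward_II} gives $h^0(\ke_2)=4$ and $h^1(\ke_2)=0$. The sequence $0\to\Sym^2\ke_1\to\ke_2^+\to\IC_{c_1}\to0$ gives $\ke_2^+$ of degree $1$ with $h^0=4$. Therefore $h^0(\ke_2^-)=0$ and $h^1(\ke_2^-)=0$, which again forces $\ke_2^-\cong\ko(-1)$.

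Next I check the bound on $m$ in Corollary~\ref{cor: global toric model}:
\[
\max\{0,\, (k-1-2(a_1-a_0))/2,\, k-1+a_0-a_1\}=\max\{0,0,0\}=0,
\]
so $m=0$ is allowed. The toric weights are then $b_0=2(a_1+m)-k=-1$ and $c_0=a_1+b_1+m-k=0$, which gives the stated weight matrix. Placing the double fibre at $(0:1)$ gives $g=t_0$, and $t_1^0x_1=x_1$. The corollary then produces the determinantal equations \eqref{eq:Halphen_matrix_II}, with $f_4$ of bidegree $(2b_1-k,4)=(1,4)$.

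The main obstacle I expect is the bookkeeping rather than any new idea. One point is that $b_0=-1$ is negative, so it must be confirmed that the cohomological splitting conditions behind Theorem~\ref{thm: global determinantal models} hold when $m=0$; these are the degree $\geq -1$ conditions in the proof of the corollary. The other point is checking that $F_4$, as a homomorphism into $(\ke_2^-)^{\otimes 2}$, really has the first-degree component stated. Both reduce to the degree computations \eqref{eq: deg of quotient in deg 2} and \eqref{eq: deg of quotient in deg 3}, which here give quotients of degree $1$ and $2$.
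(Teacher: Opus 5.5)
Your proposal is correct and follows the paper's route exactly: read off $a_0=a_1=0$ from Lemma~\ref{lem:Halphen_pushforward_II}, get $b_1=1$ (from $\chi=1$, with your $h^0(\ke_2)=4$ cross-check also valid), confirm $m=0$ is admissible, and obtain $b_0=-1$, $c_0=0$ and the bidegree $(2b_1-k,4)=(1,4)$ from Corollary~\ref{cor: global toric model}. Two small slips need fixing. First, in your last paragraph the degree-three quotient has degree $\deg\ka_1+\deg\ke_2^-+k=0-1+1=0$, consistent with $c_0=0$, not $2$; with $2$ the relevant obstruction group would be $H^1(\ko(-3))\neq0$, so your $m=0$ check would not go through. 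Second, $h^0(\ke_1)=2$ and $h^1(\ke_1)=0$ alone do not exclude $\ke_1\cong\ko(1)\oplus\ko(-1)$. That case is ruled out by $h^0(\ke_1(-1))=h^0(S,D-F_1)=0$, since $D\cdot(D-F_1)=-2$ and $-F_1$ is not effective; the paper also leaves this step implicit.
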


\section{Enriques surfaces}\label{sect: Enriques}
Let $S$ be an Enriques surface. It is well known; see, e.g., \cite[VIII.15]{BHPV}, that $S$ admits elliptic fibrations $f\colon S \to \IP^1$ with exactly two double fibres $2F_0$ and $2F_1$. The reduced double fibres are called half-pencils of $f$. 
In addition, there is always a bisection $D$ of $f$. After Horikawa, we distinguish two cases: the Enriques surface is called 
\newnotion{special} if it admits an elliptic fibration $f$, with a bisection $D$ which is a $(-2)$-curve. 
If such a fibration does not exist, then the bisection $D$ of $f$ is the half-pencil of another elliptic fibration and $S$ is called \newnotion{non-special}.

Before we look at the models in the individual cases, let us provide a few cohomology computations for later reference. 
\begin{lem}
 Let $S$ be an Enriques surface and $D$ a half-pencil on $S$. Then for $n\geq 0$
 $ \chi(nD) = 1$ and 
\[h^2(nD) = 0 \text { and }  h^0(nD)  = h^1(n D) +1  = 
\begin{cases} 
\frac n2 + 1 &  n \text{ even}\\ \frac {n-1}2 + 1 & n \text{ odd}                                          \end{cases}
\]
\end{lem}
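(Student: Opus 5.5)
We argue on the Enriques surface $S$ with the elliptic fibration $g\colon S\to\IP^1$ for which $D$ is a half-pencil, so $2D\sim G$ is a fibre of $g$. We first compute $\chi(nD)$ by Riemann--Roch: $\chi(\ko_S)=1$ and $D^2=0$, and $K_S$ is numerically trivial, so $\chi(nD)=1+\tfrac12 nD\cdot(nD-K_S)=1$ for all $n$. For $h^2$ we use Serre duality, $h^2(nD)=h^0(K_S-nD)$. Since $K_S-nD$ is numerically equivalent to $-nD$, and $D$ is a nonzero effective curve with $D\cdot H>0$ for an ample $H$, this $h^0$ vanishes for $n\geq 1$. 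For $n=0$ it equals $h^0(K_S)=p_g=0$. Hence $h^0(nD)=h^1(nD)+1$, and it remains to compute $h^0(nD)$.

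To compute $h^0(nD)$, write $n=2l$ or $n=2l+1$, so that $nD\sim lG$ or $nD\sim lG+D$.

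\emph{Even case.} We have $g_*\ko_S=\ko_{\IP^1}$, so $h^0(lG)=h^0(\IP^1,\ko(l))=l+1=\tfrac n2+1$.

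\emph{Odd case.} We must show $h^0(lG+D)=h^0(lG)$, i.e.\ that $D$ is a fixed component of $|lG+D|$. Consider the restriction sequence
\[0\to \ko_S(lG)\to\ko_S(lG+D)\to \ko_D(lG+D)\to 0.\]
Here $\ko_D(G)=\ko_D$, so $\ko_D(lG+D)\cong\ko_D(D)$, which is a nontrivial $2$-torsion line bundle on $D$ (the paper notes this for half-pencils). The curve $D$ is a fibre of Kodaira type $I_k$, so it has arithmetic genus one and every proper subcurve has arithmetic genus zero. For such a curve, a nontrivial numerically trivial line bundle has no sections: this is the standard fact used in the proof of Lemma \ref{lem:torsion}, and by Riemann--Roch it is equivalent to the vanishing of $H^1$ there. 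Therefore $H^0(D,\ko_D(D))=0$, and $h^0(lG+D)=h^0(lG)=l+1=\tfrac{n-1}2+1$.

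The only subtle point is this vanishing $H^0(D,\ko_D(D))=0$ when $D$ is singular. If needed, we would justify it as follows. A nonzero section of a degree-zero line bundle on a reduced connected curve whose components all have degree zero cannot vanish on any component, so it has no zeros and trivialises the bundle, contradicting nontriviality. The same argument would handle $\ko_D(-D)$ if required.
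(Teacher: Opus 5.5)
Your proof is correct and follows essentially the paper's route. You use Riemann--Roch for $\chi(nD)=1$, Serre duality for $h^2$, the elliptic pencil $|2D|$ for even $n$, and for odd $n$ the restriction sequence to $D$ together with $H^0(D,\ko_D(D))=0$ for the non-trivial $2$-torsion bundle (the fact underlying Lemma~\ref{lem:torsion}). The only cosmetic difference is in killing $h^2(nD)=h^0(K_S-nD)$: the paper writes $K_S\sim D-D'$ with $D'$ the other half-pencil, so that $K_S-nD\sim -D'-(n-1)D$, whereas you use numerical triviality of $K_S$ against an ample class.
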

\begin{proof}
 Let $D'$ be the corresponding half pencil such that $D- D' = K_S$. Then 
 $h^2(nD) = h^2( K_S + D' +(n-1)D) = h^0( -D' -(n-1) D) = 0 $. 
  For the rest, we use that $2D$ is an elliptic pencil, Riemann-Roch on $S$, the restriction sequence, and Lemma \ref{lem:torsion}.
\end{proof}

\subsection{Non-special Enriques surfaces}
Let $f\colon S \to \IP^1$ be a non-special Enriques surface, that is, there is a relative polarisation of degree two $\kl = \ko_S(D)$ consisting of a half-pencil (for another elliptic fibration on $S$, which has a second half-pencil $D'$).

Then writing $\ke_1 = f_*D = \ko(-a_0) \oplus \ko(-a_1)$ we see that by the projection formula 
\[h^0(\ke_1) =  h^0 (D)  = 1, \;  h^1( \ke_1) =h^1(D)  = 0 \implies (-a_0 , -a_1) = ( 0, -1) \]

We claim that $f$ satisfies the splitting condition. Let $\Gamma_c$ be the fibre of $f$ over $c$. Since it is easier to argue with sections, we consider the bundle $f_*\kl(1)\isom \ko(1) \oplus \ko$ instead of $f_*\kl$, that is, the pushforward of  $D + \Gamma_c$. 
By \cite[Chapter VIII, Lemma 17.1]{BHPV} we have  $K_S \sim F_0 - F_1 \sim D' - D$ and hence 
\[ D + \Gamma_c \sim D + 2F_0 \sim D'+ F_1- F_0 + 2F_0 = D'+ F_0 + F_1. \]
If we represent the first map in n the exact sequence
\[ 0 \to f_*\left(\kl(D+\Gamma_c -F-F')\right) = f_* D' \to f_* (D + \Gamma_c)  = \ke_1(1) \to \IC_0\oplus \IC_1 \to 0\]
by a matrix
\[
\begin{tikzcd}[ampersand replacement=\& ]
f_* D' \isom \ko\oplus \ko(-1) \rar{\begin{pmatrix} a & b \\ c & d\end{pmatrix}} \& \ko(1) \oplus \ko  = \ke_1(1)
\end{tikzcd}
\]
we see that the sections of the summand $\ko(1)$ correspond to divisors $D +\Gamma_c$ for some $c$ and the section of $f_*D'$ is not of this form. Therefore the entry $c$ is non-zero.

We can choose the image of the first summand of $f_*D'$ twisted by $\ko(-1)$ as the line bundle $\ka_1$ in the splitting condition, with $\ka_0$ being the line bundle corresponding to the section defining $D$. In other words, $f$ satisfies the splitting condition. 

From Theorem \ref{cor: global toric model} and $p_g (S) = q(S) = 0 $ we deduce that $\ke_2^- \isom \ko(-2)$ and that we may choose $m=0$, so that we have proved the following.
\begin{prop}\label{prop: non-special Enriques}
The ample model $X$ of a non-special Enriques surface $S$ is embedded in the toric variety
\[ \IT\colon \quad \mat{ t_0 & t_1 & x_0 & x_1 & y_0 & y_1 & z_0\\
1 & 1 & 0 & 1 & 0 & 2 & 1\\ 
0 & 0 & 1 & 1& 2& 2& 3}\]
with irrelevant ideal $(t_0, t_1) \cap ( x_0, x_1, y_0, y_1, z_0)$ and cut out by  equations
\[
 \rk \begin{pmatrix}
 t_0t_1 &  x_1 & y_1 \\
 x_1 & y_0 & z_0 \\  
 y_1 & z_0 & f_4 
\end{pmatrix} \leq 1,
\]
where $f_4$ is an invariant section of bidegree $(2, 4)$.  
\end{prop}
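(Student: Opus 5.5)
The plan is to apply Corollary~\ref{cor: global toric model} with $k=2$, $g(t_0,t_1)=t_0t_1$ (double fibres over $0$ and $\infty$), and the parameters already determined above. The splitting condition was verified in the preceding discussion with $\ka_0=\ko$ (the section defining $D$) and $\ka_1=\ko(-1)$. So $a_0=0$, $a_1=1$, and it remains to determine $b_1$, to check that $m=0$ is admissible, and to compute $b_0$ and $c_0$.

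\textbf{Computing $b_1$.} Since $S$ is Enriques, $p_g(S)=q(S)=0$. The numerical part of Corollary~\ref{cor: global toric model} gives $p_g(X)=b_1-a_0-a_1-1=0$, hence $b_1=2$, i.e.\ $\ke_2^-\isom\ko(-2)$.

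As an independent check, I would compare with the lemma on half-pencils. It gives $h^0(2D)=2$ and $h^1(2D)=1$. On the other hand $\Sym^2\ke_1=\ko\oplus\ko(-1)\oplus\ko(-2)$ contributes $h^0=1$ and $h^1=1$. The sequence $0\to\Sym^2\ke_1\to\ke_2^+\to\IC_0\oplus\IC_\infty\to 0$ shows that $\ke_2^+$ has degree $-1$ and rank $3$. The anti-invariant part $\ko(-b_1)$ then has to make up the remaining cohomology, which is consistent with $b_1=2$.

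\textbf{Admissibility of $m=0$.} The bound in Corollary~\ref{cor: global toric model} is
\[
m\ge\max\{0,\ (k-1-2(a_1-a_0))/2,\ k-1+a_0-a_1\}=\max\{0,\,-1/2,\,0\}=0,
\]
so $m=0$ is allowed. This gives
\[
b_0=2(a_1+m)-k=0 \quad\text{and}\quad c_0=a_1+b_1+m-k=1,
\]
which matches the weight matrix in the statement.

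\textbf{Global equations.} With $\sigma_{mc}=1$, the matrix \eqref{eq: global determinantal matrix over P1} becomes the stated one, with $g=t_0t_1$. The section $f_4$ has bidegree $(2b_1-k,4)=(2,4)$.

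\textbf{Main obstacle.} The delicate point is the splitting condition, namely that the chosen summand $\ka_1$ vanishes along both half-pencils $F_0$ and $F_1$ simultaneously. Remark~\ref{rem: Splitting condition one double fibre over genus zero} does not cover this, since there are two double fibres. The argument above via $D+\Gamma_c\sim D'+F_0+F_1$ and the nonvanishing of the entry $c$ is what gives it.

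To make this precise, I would check two things:
\begin{enumerate}
\item The image of $f_*D'\to\ke_1(1)$ consists exactly of the sections vanishing on $F_0+F_1$. This follows from the exact sequence, because the cokernel is $\IC_0\oplus\IC_\infty$.
\item The degree-$0$ summand $\ko\subset f_*D'$ maps to a subbundle that is complementary to the $\ko(1)$ summand. Nonvanishing of $c$ alone only gives a nonzero map; complementarity additionally requires that $c$, a constant in $\Hom(\ko,\ko)$, is nonzero, so that the map is an isomorphism onto the quotient. This holds because $c\neq0$ is a scalar.
\end{enumerate}
Twisting back by $\ko(-1)$ then yields $\ke_1=\ka_0\oplus\ka_1$ with $\ka_1\subset f_*\kl(-F_0-F_1)$, and Corollary~\ref{cor: global toric model} applies to give the stated embedding into $\IT$.
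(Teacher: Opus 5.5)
Your proposal is correct and follows the paper's argument. The splitting condition comes from $D+\Gamma_c\sim D'+F_0+F_1$ and the nonvanishing of the constant entry $c$; then Corollary~\ref{cor: global toric model} with $k=2$, $(a_0,a_1)=(0,1)$ and $p_g=q=0$ forces $b_1=2$ and allows $m=0$, which gives $b_0=0$, $c_0=1$ and $f_4$ of bidegree $(2,4)$. Two of your additions only make explicit steps the paper leaves implicit: the observation that $c\neq 0$ being a scalar is what makes the image of $\ko\subset f_*D'$ a direct complement of the $\ko(1)$ summand, and the Euler-characteristic cross-check of $b_1$.
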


\subsection{Special Enriques surfaces}

We want to consider special Enriques surfaces $f\colon S \to \IP^1$ with polarisation $\kl = D+F_0$, where $F_0$ is a half-pencil of $f$ and the $(-2)$-curve $D$ is a bisection, because this is the relevant polarisation if we want to write them as degenerations of non-special Enriques surfaces.

Since $h^0 (\kl) = 1$ and $h^1(\kl) = 0 $ we again have
$\ke_1 \isom \ko\oplus \ko(-1)$ where the trivial summand corresponds to a section $\sigma_{D+F_0}$ vanishing along $D+F_0$. As above, we deduce $\ke_2^- = \ko(-2)$ from the invariants of $S$ (and its ample model $X$).

\begin{lem}\label{lem: special Enriques non splitting}
	The elliptic fibration $f$ on the special Enriques surface with this polarisation does not satisfy the splitting condition.
\end{lem}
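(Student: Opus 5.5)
The plan is to enumerate all possible choices of the summand $\ka_1$ in Definition \ref{def: splitting} and rule each one out by a direct computation of global sections on $S$. We have $\ke_1 = f_*\ko_S(D+F_0) \isom \ko\oplus\ko(-1)$, so any splitting $\ke_1 = \ka_0\oplus\ka_1$ has either $\ka_1\isom\ko$ or $\ka_1\isom\ko(-1)$. In both cases $\ka_1$ must be a saturated subbundle, i.e.\ a direct summand. The condition to test is $\ka_1\subset f_*\kl(-F_0-F_1)$. I will use the relations $2F_0\sim 2F_1$ and $K_S\sim F_0-F_1$ (2-torsion), together with the fact that the global sections of $f_*\km(n)$ are $H^0(S,\km\tensor f^*\ko(n))$.

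\emph{Case $\ka_1\isom\ko$.} The trivial summand of $\ke_1$ is unique, since $\Hom(\ko,\ko\oplus\ko(-1))=\IC$. It is generated by $\sigma_{D+F_0}$. The requirement would give a nonzero element of $H^0(S, D+F_0-F_0-F_1)=H^0(S,D-F_1)$. This group vanishes: an effective divisor in $|D-F_1|$ would give $D\sim F_1+E$ with $E\geq 0$, which is impossible since $D$ is an irreducible $(-2)$-curve and $D\ne F_1$ (for instance $D\cdot F_1=1>0$ while $D^2=-2$). Equivalently, $\sigma_{D+F_0}$ does not vanish along $F_1$.

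\emph{Case $\ka_1\isom\ko(-1)$.} Here the inclusion $\ka_1\subset f_*\kl(-F_0-F_1)$ amounts to a nonzero section of
\[
f_*\kl(-F_0-F_1)\tensor\ko(1)=f_*\ko_S(D-F_1+2F_0)=f_*\ko_S(D+F_1).
\]
I will compute $h^0(S,D+F_1)=1$. Since $(D+F_1)\cdot D=-2+1=-1<0$, the curve $D$ is a fixed component, so $h^0(D+F_1)=h^0(F_1)=1$. The unique divisor is therefore $D+F_1$ itself. Viewed as a section of $\ke_1(1)=f_*(D+F_0+2F_1)$, it corresponds to the divisor $D+F_0+2F_1$, which is $\sigma_{D+F_0}$ multiplied by the equation of the fibre over $c_1$.

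This is the step I expect to need the most care, because it is the only point where a candidate section actually exists. The resulting map $\ko(-1)\to\ke_1$ factors through the trivial summand twisted by $\ko(-c_1)$. Hence it vanishes at $c_1$ as a map of bundles, and its image is not saturated, so it cannot be a direct summand complementary to some $\ka_0$. To make this rigorous, I will check directly that the cokernel of $\ko(-1)\to\ko\oplus\ko(-1)$ has torsion at $c_1$, rather than relying on the divisor description alone. Since no choice of $\ka_1$ works, the splitting condition fails.
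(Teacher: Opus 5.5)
Your proof is correct and follows essentially the paper's route: the paper also twists by the fibre $2F_1$ and observes that the only section of $f_*\ko_S(D+F_1)$ is the one with divisor $D+F_1$. Its image in $\ke_1(1)$ is $\sigma_{D+F_0}$ times the fibre equation, so it lies in the trivial summand and no summand of $\ke_1$ can sit inside $f_*\kl(-F_0-F_1)$. Your explicit case split ($\ka_1\isom\ko$ versus $\ka_1\isom\ko(-1)$), together with the direct computations $h^0(D-F_1)=0$ and $h^0(D+F_1)=1$ and the non-saturation argument, spells out what the paper leaves terse. It also avoids the paper's misprint $\ko\oplus\ko(1)$ in the pushed-forward sequence, which by degree and $h^0$ count should read $\ko\oplus\ko(-1)$.
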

\begin{proof}
As in the non-special case, we twist by a fibre, that is, we look at the sequence
\[ 0 \to D + F_1 \to D + F_0 + 2F_1 \to \kl(2F_1) |_{F_0 + F_1} \to 0. \]
Pushing forward, we get
\[ 0 \to \ko \oplus \ko(1) \to \ke_1(1) \to \IC_0 \oplus \IC_\infty \to 0\]

The trivial summand of the first sheaf corresponds to the divisor $D+ F_1$, and thus maps trivially to the trivial summand of $f_*\kl(1)$. Therefore, there is no splitting with a summand contained in the image, and $f$ does not satisfy the splitting condition. 
\end{proof}
Even with the splitting condition failing, we can nevertheless use Theorem \ref{thm: global determinantal models} to embed the ample model $X$ into a toric fourfold. 
However, it is more convenient to introduce more variables to keep the equations globally in a nice format.
\begin{table}\caption{Bundles and generators for special Enriques surfaces}\label{tab: special Enriques}
\begin{tabular}{cll}
\toprule
degree & bundles & variables \\
\midrule
1 & $\ka_0 = \ko$, $\ka_1 = \ko(-1)$ & $x_0$, $x_1$\\
2 & $\kb_0 = \ko(1)$, $\kb_1 = \ko(-1)$, & $u_0$, $u_1$\\
& $\ke_2^- = \ko(-2)$ & $y_1$ \\ 
3 & $\kc_0 = \ko(-1)$, $\kc_1 = \ko(-2)$, & $z_0$, $z_1$\\
\bottomrule
\end{tabular}
\end{table}
\begin{lem}\label{lem: coordinates special Enriques}
We can choose splittings $\ke_1 = \ka_0 \oplus \ka_1$ and $\ke_2^+ = \kb_0 \oplus \ko(-1) \oplus \kb_1$ and $\ke_3^- = \kc_0\oplus \kc_1$ with bundles as in Table \ref{tab: special Enriques}
such that in sequence \eqref{eq: deg 2 invariant} the map is given as
	\[ 
	\begin{tikzcd}[ampersand replacement=\&, column sep=large]
		\Sym^2\ke_1 = \ko \oplus \ko(-1) \oplus \ko (-2)
		\rar { \begin{pmatrix} t_0 & 0 & 0 \\ 0 & 1 & 0\\ 0 & 0 & t_1\end{pmatrix}}\&
		 \kb_0\oplus \ko(-1) \oplus \kb_1 = \ke_2^+
	\end{tikzcd} \]
	and in sequence  \eqref{eq: deg 3 anti-invariant} the map is given as
		\[ 
	\begin{tikzcd}[ampersand replacement=\&, column sep=large]
		\ke_1\tensor \ke_2^- = \ko(-2)  \oplus \ko(-3)  
		\rar { \begin{pmatrix} t_0 & 0  \\ 0 & t_1\end{pmatrix}}\&
		\kc_0\oplus \kc_1 = \ke_3^-
	\end{tikzcd}. \]
\end{lem}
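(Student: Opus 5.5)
The plan is to build the splittings by hand from explicit generators, mimicking the local model of Proposition~\ref{prop: alg model double fibre} separately at the two double fibres. I place the half-pencils so that $F_0$ lies over $\{t_0=0\}$ and $F_1$ over $\{t_1=0\}$. I start from $\ke_1\isom\ko\oplus\ko(-1)$. Here $x_0$ is the generator of the trivial summand $\ka_0$, i.e.\ the section $\sigma_{D+F_0}$, which vanishes along $F_0$ but not along $F_1$.

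\textbf{Step 1: choosing $x_1$.} The summand $\ka_1=\ko(-1)$ is only determined up to adding $\ell\, x_0$ with $\ell$ a linear form, since $\Hom(\ko(-1),\ko)=H^0(\ko(1))$. At each double fibre $c_i$, the kernel of $\ke_1\otimes k(c_i)\to H^0(F_i,\kl|_{F_i})$ is one-dimensional: this follows from Lemma \ref{lem: simple_base_pts} together with the local model, where exactly $X_1$ restricts to zero on $F$. At $c_0$ this kernel is spanned by $x_0$. At $c_\infty$ the section $x_0$ does not vanish on $F_1$, so I can choose $\ell(\infty)$ such that $x_1$ vanishes along $F_1$. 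Then $x_1$ cannot also vanish on $F_0$, since otherwise $x_0$ and $x_1$ would both lie in the one-dimensional kernel at $c_0$. This is consistent with Lemma \ref{lem: special Enriques non splitting}: no single summand vanishes on both half-pencils, but each half-pencil is handled by its own summand.

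\textbf{Step 2: degree two.} In the local model, $Y_0=X_1^2/T$, where $X_1$ is the local generator vanishing on the reduced double fibre. Near $c_0$ this role is played by $x_0$, so $u_0\coloneqq x_0^2/t_0$ is a regular section of $\ke_2^+$ with values in $\ko(1)$. Near $c_\infty$ the role is played by $x_1$, so $u_1\coloneqq x_1^2/t_1$ is a regular section with values in $\ko(-1)$. The middle generator $x_0x_1$ is kept unchanged. The resulting map $\Sym^2\ke_1\to\ko(1)\oplus\ko(-1)\oplus\ko(-1)$ is the stated diagonal matrix, with cokernel $\IC_0\oplus\IC_\infty$. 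To show that the target is all of $\ke_2^+$, I check fibrewise surjectivity. Away from $c_0,c_\infty$ this is clear. At each $c_i$, Proposition \ref{prop: alg model double fibre} shows that $X_0^2, X_0X_1, Y_0$ span $\ke_2^+$, which here means $u_0,x_0x_1,x_1^2$ at $c_0$ and $x_0^2,x_0x_1,u_1$ at $c_\infty$. As a consistency check, the degrees agree: $\deg\ke_2^+=-3+2=-1$.

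\textbf{Step 3: degree three.} I proceed in the same way with $\ke_2^-=\ko(-2)$, generated by $y_1$. The sections $z_0\coloneqq x_0y_1/t_0\in\ko(-1)$ and $z_1\coloneqq x_1y_1/t_1\in\ko(-2)$ are regular because $y_1$ vanishes on both $F_i$. These correspond to $Z=X_1Y_1/T$ in the local model. Again the local model gives fibrewise generation at $c_0,c_\infty$, so $\ke_3^-=\kc_0\oplus\kc_1$ with the stated diagonal map.

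The main obstacle is Step 1, namely justifying the one-dimensionality of the vanishing kernel at each double fibre and the fact that $x_0$ does not vanish along $F_1$. Everything after that is a direct transcription of Proposition \ref{prop: alg model double fibre} at two points.
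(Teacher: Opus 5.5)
Your proposal is correct and follows essentially the same route as the paper. It uses the same coordinates: $x_0=\sigma_{D+F_0}$ and a complementary $x_1$ vanishing along $F_1$, which play the role of the local $X_1$ at $c_0$ and at $c_\infty$ respectively, together with $u_i=x_i^2/t_i$ and $z_i=x_iy_1/t_i$.

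The differences are only in how the individual steps are justified:
\begin{itemize}
\item \emph{Choosing $x_1$.} You correct a complement by $\ell x_0$, using the one-dimensional kernel of restriction to $F_1$. The paper instead counts the pencil of sections of $D+F_0+2F_1$ vanishing on $F_1$.
\item \emph{The splittings of $\ke_2^+$ and $\ke_3^-$.} You show that $(u_0,x_0x_1,u_1)$ and $(z_0,z_1)$ give isomorphisms by fibrewise surjectivity, via Proposition~\ref{prop: alg model double fibre}. The paper argues through the quotient by $\ka_0\otimes\ka_1$ and a degree count.
\end{itemize}
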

\begin{proof}
Consider the restriction sequence
\[ 0 \to D+F + F' \to D + F + 2F'  \to D|_{F'} \to 0. \] 
This is also exact on global sections, because, for example, the section defining $D+3F$ restricts to a non-zero section on $F'$ and $h^0(F', D_{F'}) = 1$. So there is a pencil of sections vanishing at least once along $F'$. Subtracting $F'$ again, we see that there is a unique section vanishing twice along $F'$. 

We have seen in the proof of Lemma \ref{lem: special Enriques non splitting} that the sections of the summand $\ka_0 (1) \subset \ke_1(1)$ correspond to divisors of the form $D + F + \Gamma_c$ of which only one vanishes along $F'$ at all. So there is a section  $\ka_1 \to \ke_1$ vanishing once along $F'$ and giving rise to the desired splitting in degree one. 

For the invariant part in degree two, we see that, similarly to \eqref{eq: deg 2 invariant}, we can mod out by $\ka_0 \tensor \ka_1$ to get a diagram 
 \begin{equation}\label{eq: deg 2 invariant special Enriques}
  \begin{tikzcd}
  & 0\dar &0 \dar& 0 \dar\\
  0 \rar & \ka_0 ^{\tensor 2} \rar \dar& \kb_0 \rar \dar& \IC_0 \dar\rar & 0 \\
   0 \rar& \ka_0^{\tensor 2} \oplus \ka_1^{\tensor 2} \dar\rar & \ke_2^+/(\ka_0\tensor \ka_1) \dar\rar &  \IC_{0 } \oplus \IC_\infty\rar \dar& 0 \\
   0\rar &\ka_1^{\tensor 2}\rar\dar& \kb_1\dar \rar & \IC_\infty \rar\dar & 0 
   \\
   & 0 & 0 & 0 
  \end{tikzcd}.
 \end{equation}
where we use that $x_0^2 $ vanishes along $\Gamma_0 = 2F_0$ and $x_1^2$ vanishes along $\Gamma_\infty = 2 F_1$. Looking at the degrees, it is clear that the middle column splits and that we can also split $\ke_2^+$. 

Starting from diagram  \eqref{eq: deg 3 anti-invariant}, the argument for the anti-invariant part in degree three is similar.
\end{proof}
Having chosen these coordinates, we can describe the embedding.
\begin{prop}\label{prop: special Enriques}
The  choice of coordinates in Table \ref{tab: special Enriques} embeds the ample model 	$X$ of a special Enriques surface
 into the toric variety
\[ \IT'\colon \quad \mat{ t_0 & t_1 & x_0 & x_1 & u_0 & u_1 & y_1 & z_0 & z_1\\
	1 & 1 & 0 & 1 & -1 & 1 & 2 & 1 & 2 \\ 
	0 & 0 & 1 & 1& 2& 2& 2& 3& 3}\]
with ideal generated by the twelve equations
\[
 \rk \begin{pmatrix}
	t_0 &  x_0 & y_1 \\
	x_0 & u_0 & z_0 \\  
	y_1 & z_0 & t_1f_4 
\end{pmatrix} \leq 1, \qquad 
z_0z_1 = x_0x_1f_4, \qquad 
 \rk \begin{pmatrix}
	t_1 &  x_1 & y_1 \\
	x_1 & u_1 & z_1 \\  
	y_1 & z_1 & t_0f_4 
\end{pmatrix} \leq 1.
\]
\end{prop}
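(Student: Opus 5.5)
The plan is to mimic the proof of Theorem \ref{thm: global determinantal models}, now with the coordinates of Lemma \ref{lem: coordinates special Enriques}. First we check that the weighted symmetric algebra $\wSym_{(1,2,3)}(\ka_0\oplus\ka_1;\kb_0\oplus\kb_1\oplus\ke_2^-;\kc_0\oplus\kc_1)$ surjects onto $\kr(f,\kl)$. This algebra is exactly the Cox ring of $\IT'$ with the degrees of Table \ref{tab: special Enriques}. The summand $\ko(-1)\subset\ke_2^+$ is the image of $\ka_0\tensor\ka_1$, i.e.\ $x_0x_1$. By Lemma \ref{lem: coordinates special Enriques} the degree-two invariant and degree-three anti-invariant parts are generated by monomials in $x_0,x_1,u_0,u_1,y_1,z_0,z_1$. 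Fibrewise the section ring is generated in degree $\le 3$ (Lemma \ref{lem: section ring double fibre} at $t_0=0$ and $t_1=0$, and Section \ref{sec: non-multiple fibres} elsewhere). Hence the map is surjective and taking $\Proj$ gives the embedding.

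For the relations, the matrices in Lemma \ref{lem: coordinates special Enriques} encode $t_0u_0=x_0^2$, $t_1u_1=x_1^2$, $t_0z_0=x_0y_1$ and $t_1z_1=x_1y_1$. These are the $(1,2)$-minors of the two matrices. Here the roles are reversed compared to the split case: $x_0$ vanishes on $F_0$ and $x_1$ on $F_1$. Next, $y_1$ vanishes on both half-pencils but not doubly, so $y_1^2=t_0t_1f_4$ for an invariant quartic $f_4$ in $x_0,x_1,u_0,u_1$. The bidegree is forced, since $\deg y_1^2=(4,4)$.

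The remaining relations follow by dividing, exactly as in the proof of Theorem \ref{thm: global determinantal models}:
\begin{itemize}
\item $z_0^2=x_0^2y_1^2/t_0^2=u_0t_1f_4$;
\item $x_0z_0=u_0y_1$;
\item $y_1z_0=x_0t_1f_4$;
\item $z_0z_1=x_0x_1y_1^2/(t_0t_1)=x_0x_1f_4$;
\end{itemize}
and symmetrically for the index $1$. Each of these identities holds on the generic fibre. Since the sections involved are regular and $\kr(f,\kl)$ is torsion free over $\ko_{\IP^1}$, they hold globally. This gives twelve relations: five minors from each matrix, $z_0z_1$, and one repeated relation $y_1^2=t_0t_1f_4$, which is consistent with both matrices.

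The main obstacle is showing that these twelve equations generate the full ideal, with no embedded or extra components over $t_0t_1=0$. The approach is to check locally.
\begin{itemize}
\item Over $t_0\ne0$, we eliminate $u_0=x_0^2/t_0$ and $z_0=x_0y_1/t_0$. The equations then reduce to the determinantal model of Proposition \ref{prop: alg model double fibre} in the variables $(x_1,u_1,y_1,z_1)$ with uniformiser $t_1$, after substituting these expressions into $f_4$.
\item Symmetrically over $t_1\ne0$.
\end{itemize}
Agreement with the local models on each chart, which cover $\IP^1$, then proves the claim.
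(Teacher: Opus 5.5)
Your proposal is correct and follows the paper's proof. It uses the four basic relations from Lemma~\ref{lem: coordinates special Enriques} and $y_1^2=t_0t_1f_4$ from the local model, then obtains the remaining minors and $z_0z_1=x_0x_1f_4$ by dividing, which you justify by torsion-freeness. You also make explicit two steps the paper leaves implicit by referring to Theorem~\ref{thm: global determinantal models}: surjectivity of the weighted symmetric algebra, and the chart-wise comparison with Proposition~\ref{prop: alg model double fibre} showing that the twelve equations cut out exactly $X$.
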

\begin{proof} This is very similar to the proof of Theorem \ref{thm: global determinantal models}. 
By Lemma \ref{lem: coordinates special Enriques} we have the relations 
\[ x_1^2 = t_1 u_1 \text{ and }  x_0^2 = t_0 u_0\]
in the invariant subring and 
\[ x_0 y_1 = t_0 z_0 \text{ and }  x_1 y_1 = t_1 z_1\]
in the degree three anti-invariant part. 
By the local model, we have $y_1^2$ vanishing along the double fibres and the branch locus, that is,
\[ y_1^2 = t_0 t_1 f_4.\]

To verify the remaining relations, we compute
\begin{align*}
  z_i ^2& = \frac{x_i ^2 } {t_i^2} y_1^2 = \frac{x_i ^2 } {t_i^2} t_0 t_1 f_4 =  = \frac{t_i u_i } {t_i^2} t_0 t_1 f_4 =   \frac{t_0 t_1 } {t_i} u_i f_4, \\
 z_i x_i  &= \frac{ x_i y_1}{t_i} x_i = \frac{ t_i u_i y_1 }{t_i } = u_i y_1,\\
 z_i y_1 & = \frac{ x_i y_1}{t_i} y_1 = \frac{ x_i t_0 t_1f_4}{t_i},\\
 z_0z_1&  = \frac{ z_0 u_1 y_1}{x_1} = \frac{ x_0 t_0 t_1f_4}{t_0}\frac{ u_1}{x_1} = x_0f_4\frac{  t_1 u_1}{x_1} = x_0 x_1 f_4.
\end{align*}
This gives the claimed presentation, with six equations from each matrix, one extra equation and the equation $y_1^2 = t_0 t_1 f_4$ appearing twice in the list. 
\end{proof}

\subsection{Connecting special and non-special Enriques surfaces}
From Proposition \ref{prop: non-special Enriques} we see that non-special Enriques surfaces are parametrised by an equation $f_4$ contained in an open subset of a linear system. Below we will explain how, using our relative algebraic models, special Enriques surfaces can be deformed to non-special ones and thus reprove the following well-known result \cite[Thm. 4.3]{horikawa}.
\begin{thm}[Horikawa]
 Every special Enriques surface can be deformed to a non-special Enriques surface, and thus all Enriques surfaces are deformation equivalent. 
\end{thm}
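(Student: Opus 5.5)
We will construct an explicit one-parameter family of ample models that has the special model of Proposition \ref{prop: special Enriques} as central fibre and the non-special model of Proposition \ref{prop: non-special Enriques} as general fibre. Both models share the same invariant data: $\ke_1 = \ko\oplus\ko(-1)$ and $\ke_2^- = \ko(-2)$. The only difference lies in how the degree-two invariant generators and the degree-three anti-invariant generators are arranged around the two double fibres over $0$ and $\infty$.

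In the non-special case, the coordinate $x_1$ vanishes along both half-pencils, and a single determinantal matrix with entry $t_0t_1$ suffices. In the special case, $x_0$ vanishes along $F_0$ and $x_1$ along $F_1$, which forces the two matrices of Proposition \ref{prop: special Enriques}.

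The plan is to work in the toric variety $\IT'$ with a deformation parameter $\lambda$. We would replace the relations $x_0^2=t_0u_0$ and $x_1^2=t_1u_1$ by a family in which, for $\lambda\neq 0$, a coordinate change $x_1' = x_1+\lambda t_1 x_0$ (or a similar linear change in $\ke_1(1)$) makes one summand vanish on both double fibres. We would then write the twelve equations with $\lambda$-dependent entries, for instance matrices of the shape
\[
\begin{pmatrix} t_0 & x_0+\lambda(\dots) & y_1\\ \cdot & u_0 & z_0\\ \cdot & \cdot & t_1 f_4\end{pmatrix}.
\]
For $\lambda\neq 0$ we would eliminate the redundant variables $u_0$ and $z_1$ (or $u_1$ and $z_0$), which become expressible through the other variables because some entry becomes a unit. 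The aim is that the equations then collapse to the single determinantal system of Proposition \ref{prop: non-special Enriques} inside $\IT$.

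Next we would verify flatness, for example by showing the Hilbert function of each fibre is constant. This follows once each fibre has the local structure of Proposition \ref{prop: alg model double fibre} at the double fibres and of Proposition \ref{prop:types_sing_fibres} elsewhere, since the relative section ring ranks are then $2n$. We would also check that for general $f_4$ and $\lambda$ the fibres have only ADE singularities. Their minimal resolutions then form a smooth family. The general member is an Enriques surface, because it has the invariants from Corollary \ref{cor: global toric model}, namely $p_g=q=0$ and $2K\sim0$, by deformation invariance. It is non-special because it is given by Proposition \ref{prop: non-special Enriques}, or alternatively because the $(-2)$-curve $D$ does not deform: $D+F_0$ deforms to a half-pencil.

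Finally, deformation equivalence of all Enriques surfaces follows. Non-special ones are parametrised by an open subset of the irreducible linear system of $f_4$, hence are all deformation equivalent, and every special one deforms into this set.

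The main obstacle is writing the family explicitly so that the ideal stays flat, i.e.\ so that the extra generators $u_i,z_i$ degenerate correctly. I would first try a Tom-and-Jerry style unprojection ansatz: treat $z_1$ and $u_1$ as unprojection variables for a divisor that exists only at $\lambda=0$, and check flatness by a Gröbner-basis or Hilbert series computation.
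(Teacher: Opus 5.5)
\textbf{There is a genuine gap: the family cannot be built inside a fixed $\IT'$.} Your overall strategy matches the paper's: an explicit one-parameter family of ample models with the special model of Proposition~\ref{prop: special Enriques} as central fibre and the non-special model of Proposition~\ref{prop: non-special Enriques} as general fibre, followed by a flatness check. But your concrete plan, keeping the ambient $\IT'$ fixed and eliminating variables for $\lambda\neq 0$, cannot work, and this is exactly where the missing idea lies.

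You observe that $\ke_1$ and $\ke_2^-$ agree in both cases, but $\ke_2^+$ does not. It is $\ko(1)\oplus\ko(-1)^{\oplus 2}$ for the special model. For the non-special model it is $\ko^{\oplus 2}\oplus\ko(-1)$, generated by $x_0^2, x_0x_1, y_0$. So the splitting type jumps.

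Concretely, in $\IT'$ the only monomial of bidegree $(-1,2)$ is $u_0$. On a fibre it restricts to an element of $H^0(\IP^1,\ke_2(-1))$, and this group vanishes for the non-special model. Take any flat family in $\IT'\times\IA^1$ whose general fibre is the non-special model. Then $u_0$ vanishes on the general fibres. By flatness (no $\lambda$-torsion) it vanishes on their limit, contradicting $x_0^2=t_0u_0$ on the special model.

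Likewise, eliminating $u_0,z_1$ (or $u_1,z_0$) from $\IT'$ leaves a degree-two invariant variable of weight $(1,2)$ (or $(-1,2)$). It never has the weight $(0,2)$ of $y_0$ in $\IT$, so the equations cannot ``collapse'' to those of Proposition~\ref{prop: non-special Enriques}. A Tom-and-Jerry unprojection ansatz for $u_1,z_1$ does not touch this problem.

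\textbf{The missing idea is the scrollar deformation trick, which changes the ambient.} Replace $u_0$, of weight $(-1,2)$, by a variable $v_1$ of weight $(0,2)$ together with a placeholder $v_0=x_0^2$. This is legitimate because $\ko^{\oplus 2}\xrightarrow{(t_0,t_1)}\ko(1)$ is surjective as a map of sheaves. Hence the special model still embeds in the new toric variety $\tilde\IT$ via $v_i=t_iu_0$. The invariant family is
\[
t_0v_1=t_1x_0^2-2\lambda x_0x_1+\lambda^2u_1,\qquad x_1^2=t_1u_1 .
\]
At $\lambda=0$, the ratio $u_0=v_1/t_1=v_0/t_0$ is globally defined and recovers the special invariant relations. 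For $\lambda\neq0$ one eliminates $u_1$, not $u_0$, and obtains $t_0t_1v_1=(\lambda x_1-t_1x_0)^2$. So $v_1$ takes over the role of $y_0$.

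Lifting this to the anti-invariant part still requires explicit equations in $\tilde\IT$. The paper uses Coughlan's two $3\times 3$ determinantal matrices. It also needs a computer flatness check and the removal of an irrelevant component at $\lambda=0$. Your proposal supplies none of this, and as set up in $\IT'$ it cannot.
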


Instead of starting with the final result, let us explain the process of finding the deformation step by step.

\subsubsection{Deforming the invariant subring}\label{sect: invariant deformation Enriques}
The invariant subrings can be deformed into each other using the so-called scrollar deformation trick: consider inside the toric variety 
\[ \tilde \IT^+\colon \quad \mat{ t_0 & t_1 & x_0 & x_1 & v_1& u_1 \\
	1 & 1 & 0 & 1 &  0 & 1 \\ 
	0 & 0 & 1 & 1&  2& 2}\]
with irrelevant ideal $(t_0, t_1)\cap (x_0, x_1,  v_1, u_1)$ the family cut out by 
\begin{align*}
	t_0 v_1&  =  t_1 v_0 - 2 \lambda x_0 x_1+\lambda^2u_1,\\
	x_0^2 &= v_0, \\
	x_1^2 &= t_1u_1,
\end{align*}
where $v_0$ is just a placeholder variable. Let us analyse the fibres.
\begin{prooflist}
	\item[The special fibre $\lambda = 0$]
	This is the key step in the scrollar deformation trick: the variable
	\begin{equation}\label{eq: u0} 
	u_0 := \frac{v_1}{t_1} = \frac{v_0} {t_0}	 
	\end{equation}
	is globally defined and can be used to eliminate $v_i = t_i u_0$
	The remaining equations are 
	\[ x_0^2 = t_0 u_0 , \, x_1^2 = t_1 u_1\]
	which are exactly the invariant equations for the special Enriques surfaces in Proposition~\ref{prop: special Enriques}. 
	\item[The general fibre $\lambda \neq 0$]
	We can eliminate $v_0$ again and also $u_1$, because $\lambda \neq 0$.
	After elementary algebraic manipulations, the remaining equation becomes

\begin{equation}\label{eq: enriques general invariant relation}
 \begin{split}
 t_0t_1v_1 & = t_1 ( t_0 v_1) \\
 & = t_1\left( \lambda^2 u_1 + t_1 x_0 ^2 - 2 \lambda x_0 x_1\right) \\
 & = \lambda^2 t_1 u_1 + t_1^2 x_0 ^ 2 - 2\lambda  x_1 t_1 x_0\\
 & = \lambda^2 x_1^2 - 2\lambda  x_1 t_1 x_0+ t_1^2 x_0 ^ 2\\
 & = \left(\lambda x_1 - t_1 x_0 \right)^2  
 \end{split}
\end{equation}
Up to a change in coordinates, this is the invariant subring for the non-special Enriques from Proposition \ref{prop: non-special Enriques}.

Note that the extra summand $2\lambda x_0 x_1$ in the first relation is not needed to make the ambient toric varieties deform to each other but rather to make the equations of the surfaces inside them come out nicely. 
\end{prooflist}

\subsubsection{Deforming the double cover}\label{sect: Stephens family}
Our own attempts to lift the scrollar deformation of the previous section to the double cover failed, and we are indebted to Stephen Coughlan for explaining to us how to obtain the family given below. Some of the claims that follow have been verified using  Macaulay2 \cite{M2}.\footnote{The relevant Macaulay2 code is included in the comments in the .tex file. }

Inspired by the scrollar deformations and since the anti-invariant variables $y_1$ and $z_0$ are shared between the special and non-special models (with the same weights and roles), as an ambient toric variety we are led to consider 
\[
\widetilde{\mathbb{T}}=
\begin{pmatrix}
t_0 & t_1 & x_0 & x_1 & v_1 & u_1 & y_1 & z_0 & z_1 \\
1 & 1 & 0 & 1 & 0 & 1 & 2 & 1 & 2 \\
0 & 0 & 1 & 1 & 2 & 2 & 2 & 3 & 3
\end{pmatrix}
\]
with irrelevant ideal $(t_0, t_1) \cap ( x_0, x_1, v_1, u_1, y_1, z_0, z_1)$.

We can naively try to lift  the twelve equations for the special Enriques surface from Proposition \ref{prop: special Enriques} to $\tilde 
\IT$, resulting in nine original equations not involving $u_0$

\begin{gather*}
 \rk \begin{pmatrix}
	t_1 &  x_1 & y_1 \\
	x_1 & u_1 & z_1 \\  
	y_1 & z_1 & t_0f_4 
\end{pmatrix} \leq 1
\end{gather*}
\vspace*{-1\baselineskip}
\begin{align*}
&x_0y_1 = t_0z_0 && y_1z_0 = t_1x_0f_4 && z_0z_1 = x_0x_1f_4 
\intertext{and for each of the three equations involving $u_0$ two new equations,}
&v_0  = x_0^2  && \boxed{ t_0x_0z_0 = v_0y_1 } && \boxed{ t_0z_0^2 = t_1v_0f_4 } \\
 &t_0 v_1  = t_1x_0^2  && t_1x_0z_0 = v_1y_1 && z_0^2 = v_1f_4, 
\end{align*}
one for each possible substitution from \eqref{eq: u0}. 

However, these equations do not define the special Enriques surfaces alone, because the primary decomposition of the ideal gives another component contained in the irrelevant ideal of $\tilde \IT$. 
Removing that superfluous component results in dropping the two boxed equations and adding $x_0x_1z_0 = v_1z_1$.

One can now carefully lift the deformation of the invariant subring found in Section \ref{sect: invariant deformation Enriques} to the double cover. A posteriori, they can be arranged nicely as follows after introducing some new variables:
\begin{equation}\label{eq: full deformation Enriques} 
\begin{matrix}
 v_0  = x_0^2, \qquad x_1^\lambda = t_0 x_0 -\lambda x_1, \qquad u_1^\lambda = x_0x_1 - \lambda u_1\\
 \\
 t_0 v_1  = t_1x_0^2 - 2\lambda x_0x_1 + \lambda^2 u_1 = \frac{ u_1^\lambda x_1^\lambda}{x_1},
 \qquad
x_0y_1= t_0z_0 + \lambda z_1, \\
 \\
\rk \begin{pmatrix}
	t_1 &  x_1 & y_1 \\
	x_1 & u_1 & z_1 \\  
	y_1 & z_1 & t_0f_4 
\end{pmatrix} \leq 1,
\quad
 \rk \begin{pmatrix}
	t_0x_1  &  x_1^\lambda & y_1 \\
	u_1^\lambda & v_1 & z_0 \\  
	z_1 & z_0 & f_4 
\end{pmatrix} \leq 1,
 \end{matrix}
\end{equation}

Flatness can be checked using \cite{M2} and \cite[Chapter III, Proposition 9.7]{Hartshorne}.
It therefore remains to analyse the fibres.
\begin{prooflist}
	\item[The special fibre $\lambda = 0$] This can be checked by hand, since most of the equations can immediately be spotted after setting $\lambda =0$ and substituting $v_i = t_i u_0$ from \eqref{eq: u0}. However, the resulting ideal does again differ from the one given in Proposition \ref{prop: special Enriques} by an irrelevant component, easily identified using Macaulay2.

\item[The general fibre $\lambda \neq 0$] 
We need to  eliminate the two extra variables
\begin{equation*}\label{eq: substitution general fibre}  
\lambda^2 u_1  =  t_0 v_1-  t_1x_0^2 +  2\lambda x_0x_1 \text{ and } 
\lambda z_1= x_0y_1-  t_0z_0 
\end{equation*}
and then show that with $ x_1^\lambda = t_1x_0 -\lambda x_1$ the relations are 
	\[
 \rk \begin{pmatrix}
 t_0t_1 &  x_1^\lambda & y_1 \\
 x_1^\lambda & v_1 & z_0 \\  
 y_1 & z_0 & f_4 
\end{pmatrix} \leq 1.
\]
This is easily done in Macaulay2.

We have found a flat deformation from the model of the special Enriques surfaces to the model of the non-special ones and thus concluded the proof of the theorem. \qed

\end{prooflist}

\bibliographystyle{alpha}
\bibliography{references}

\end{document}